\newtheorem{theorem}{Theorem}[section]
\newtheorem{lemma}[theorem]{Lemma}
\newtheorem{cor}[theorem]{Corollary}
\newcommand{\R}{{\mathbb R}}
\newcommand{\Z}{{\mathbb Z}}
\newcommand{\T}{{\mathbb T}}
\newcommand{\op}[1]{\!\!\mathop{\rm ~#1}\nolimits}
\renewcommand{\geq}{\geqslant}
\renewcommand{\leq}{\leqslant}
\newcommand{\h}{\hbar}
\DeclareMathOperator{\Log}{Log}
\newenvironment{remark}{\refstepcounter{theorem}\par\medskip\noindent{
{\bf Remark~\thetheorem.}}}{\unskip\nobreak\hfill\hbox{$\oslash$}\par\bigskip}
\newenvironment{definition}{\refstepcounter{theorem}\par\medskip\noindent{{\bf
Definition~\thetheorem.}}}{\unskip\nobreak\hfill\hbox{}\par\medskip}
\def\scal#1#2{\left\langle #1,#2\right\rangle}
\def\Hil{\mathcal{H}\xspace}
\def\classe#1#2{\mathscr{C}^{#1}#2}
\begin{document}

\title[Asymptotics of semiclassical unitary operators]{Spectral asymptotics of semiclassical unitary operators}

\author{Yohann Le Floch\,\,\,\,\,\,\,\,\,\,{\'A}lvaro Pelayo}

\date{}

\keywords{Semiclassical analysis, spectral theory, symplectic actions.}

\subjclass[2010]{34L05,81Q20,35P20,53D50}

\begin{abstract}
This paper establishes an aspect of Bohr's correspondence principle, i.e. that quantum mechanics converges in the high frequency limit to classical mechanics, for commuting semiclassical unitary operators.  We prove, under minimal assumptions, that the semiclassical limit of the convex hulls of the quantum spectrum of a collection of commuting semiclassical unitary operators converges to the convex hull of the classical spectrum of the principal symbols of the operators. 
\end{abstract}

\maketitle

\section{Introduction}

One of the current leading questions in spectral theory is to what extent information about the principals symbols of an operator 
or collection of commuting operators may be detected in their joint spectrum. 

In principle, the spectrum has too little information but the surprise 
comes from the fact that \emph{sometimes} it contains all the information about the principal symbols.  

The question of determining when this is the case is spectral theoretically fundamental and fits into the recent flurry of activity 
on inverse (and direct) semiclassical spectral problems \cite{Zel,SanInverse,ColGui,Col,ChaPelVu,PelVuFF,LFPelVu,MR3129048, MR2801777}. It is
originally motivated by Bohr's correspondence principle: that quantum mechanics converges in the high frequency (i.e. semiclassical) limit to
classical mechanics. This principle can have many interesting manifestations.

Four years ago, Pelayo, Polterovich and V{\~u} Ng{\d{o}}c proposed in \cite{PelPolVu} a minimal set of axioms that a collection of commuting 
semiclassical self-adjoint operators should satisfy in order for the convex hull of the semiclassical limit of their joint spectrum to converge to the
convex hull of the joint image of the principal symbols (a subset of Euclidean space). 

The result by these authors is not known to hold, however, for other types of operators which are also very important in analysis and physics, such as unitary operators. 
These are of special interest in symplectic geometry in view of the recent breakthrough by Susan Tolman~\cite{MR3735631} who has shown that there are many symplectic
non-Hamiltonian actions with finitely many fixed points (on compact manifolds). 

All such actions admit a $\mathbb{S}^1$\--valued momentum map and their quantization
is a semiclassical unitary operator. This is the original motivation of our work below. See also for further motivations \cite[Section 5.2]{PelBAMS17} and \cite{MR3506787}. Here we do not
use Fourier integral operators, which are used to quantize symplectomorphisms, but instead we are concerned with 
genuine pseudodifferential or Berezin-Toeplitz operators quantizing $\mathbb{S}^1$-valued functions on the phase space.

The goal of this paper is to precisely address the deficiency in the literature by studying an important property of the 
semiclassical asymptotics of unitary operators: the relation between the joint spectrum of the operators and the joint image of the principal symbols of the operators.

In order to do this precisely, and with the maximum generality, we start by defining the notion of semiclassical quantization of a manifold.

Let $M$ be a connected manifold. Let $\mathcal{A}_0$ be a subalgebra of $\classe{\infty}{(M,\mathbb{C})}$ containing the constants and the compactly supported functions, and stable by complex conjugation. Assume also that if $f \in \mathcal{A}_0$ never vanishes, then $1/f$ also belongs to $\mathcal{A}_0$; {finally, assume that whenever $f \in \mathcal{A}_0$ is real-valued and bounded and $g \in \classe{\infty}{(\mathbb{R},\mathbb{R})}$ is bounded, $g \circ f$ also belongs to $\mathcal{A}_0$}. 

\begin{definition}\label{0} Suppose that $I \subset (0,1]$ accumulates at zero. 
A \emph{semiclassical quantization} of $(M,\mathcal{A}_0)$ is a family of complex Hilbert spaces $(\Hil_{\hbar})_{\hbar \in I}$ together with a family of $\mathbb{C}$-linear maps $\mathrm{Op}_{\hbar}: \mathcal{A}_0 \to \mathcal{L}(\Hil_{\hbar})$ satisfying for all $f,g \in \mathcal{A}_0$:
\begin{enumerate}[label=(Q\arabic*)]
\item \label{item:composition} if $f$ and $g$ are bounded, then the composition $\mathrm{Op}_{\hbar}\left(f \right) \mathrm{Op}_{\hbar}\left(g\right)$ is well-defined and
$$\| \mathrm{Op}_{\hbar}(f) \mathrm{Op}_{\hbar}(g) - \mathrm{Op}_{\hbar}(fg) \| = \mathcal{O}(\hbar).$$ 
({\em composition});
\item \label{item:reality} for every $\hbar \in I$, $\mathrm{Op}_{\hbar}(f)^* = \mathrm{Op}_{\hbar}\left(\bar{f} \ \right)$.
 ({\em reality});
\item \label{item:normalization} $ \mathrm{Op}_{\hbar}(1) = \mathrm{Id}$. 
({\em normalization}).
\item \label{item:garding}if $f \geq 0$, then there exists  $C > 0$ such that for every $\h\in I$, $\mathrm{Op}_{\hbar}(f) \geq -C\hbar \ \mathrm{Id}$.
(\emph{quasi-positivity});
\item \label{item:non-degeneracy} if $f \neq 0$ has compact support, then $\mathrm{Op}_{\hbar}(f)$ is bounded for every $\hbar \in I$ and 
$$ \liminf_{\hbar \to 0} \| \mathrm{Op}_{\hbar}(f)\| > 0.$$ 
({\em non-degeneracy}).
\item \label{item:product} if $g$ has compact support, then for every $f \in \mathcal{A}_0$, $\mathrm{Op}_{\hbar}(f) \mathrm{Op}_{\hbar}(g)$ is bounded and 
$\| \mathrm{Op}_{\hbar}(f) \mathrm{Op}_{\hbar}(g) - \mathrm{Op}_{\hbar}(fg) \| = \mathcal{O}(\hbar).$
({\em product formula}).
\end{enumerate}
If such a semiclassical quantization exists, we say that $M$ is \emph{quantizable}. 
\end{definition}

Suppose that  $M$  is quantizable. Then there is a way to associate, to each $f \in \classe{\infty}{(M,\mathbb{C})}$, a family $(\mathrm{Op}_{\hbar}(f))_{\hbar \in I}$ of operators acting on Hilbert spaces $(\Hil_{\hbar})_{\hbar \in I}$, and such that this construction respects certain axioms (Definition \ref{so}). When this is the case, one can work in the reverse direction and associate to such a family $(T(\hbar))_{\hbar \in I}$ a function $f \in \classe{\infty}{(M,\mathbb{C})}$ such that $T(\hbar) = \mathrm{Op}_{\hbar}(f) + \mathcal{O}(\hbar)$, which is called
 \emph{the principal symbol of $T(\hbar)$}. 

If  $(U_1(\hbar),\dots, U_d(\hbar))$ are  pairwise commuting unitary 
semiclassical operators on $M$ then necessarily their principal symbols are $\mathbb{S}^1$\--valued\footnote{this is proved later in Lemma \ref{lm:symb_unitary}.}.
Hence the image of a collection of such operators is a subset of the $d$\--torus.  For a subset $A$ of $\T^d$  let $\textup{Convex Hull}_{\T^d}(A)$ be its convex hull in the torus.   

The notion of convex hull in tori is not obvious (more precisely, one cannot take the naive notion), see the second appendix.
  The details of the second appendix are plentiful but fortunately they are 
  not needed to understand the statement above, only the definition of convex hull is needed, but they are needed for the proofs. 
  
    We can now
  state the main result of the paper, which studies convergence in the Hausdorff 
  distance\footnote{The \emph{Hausdorff distance} (see e.g.\cite[Definition 7.3.1]{Bur}) between two subsets $A\subset X$ and $B\subset X$ of a metric space $(X,d)$ is the quantity
${\rm d}_H^X(A,\,B):= \inf\left\{\varepsilon > 0\,\, | \,\ A \subseteq B_\varepsilon \ \mbox{and}\ B \subseteq A_\varepsilon\right\}$, 
where for any subset $S$ of $X$, and for any $\epsilon>0$, the set $S_{\varepsilon}$ is defined as $S_\varepsilon := \cup_{s \in S} \{x \in X \, \, | \,\, d(s, x) \leq \varepsilon\}$. Recall that if ${\rm d}_H^X(A,\,B)=0$ and $A,B$ are closed sets, then $A=B$. When $X = \R^d$ with its Euclidean norm, we will simply use the notation $d_{H}$ for the Hausdorff distance. }
 for a large class of unitary operators:

\begin{theorem} \label{theo:toeplitz}
Let $M$ be a quantizable manifold and let $U_1(\hbar),\dots, U_d(\hbar)$ be semiclassical commuting unitary operators on $M$.
Let $f_j$ be the principal symbol of $U_j(\hbar)$ and assume that no $f_j$ is onto,  and all images  $f_j(M)$'s and the joint image 
$(f_1,\ldots, f_d)(M)$ are closed. 
 Then from $\{\textup{Convex Hull}_{\T^d}(\textup{JointSpec}(U_1(\hbar),\dots, U_d(\hbar))\}_{\hbar \in I}$, one can recover the convex hull of  $(f_1,\ldots,f_f)(M)$.
 Furthermore, under generic assumptions, the semiclassical limit of this family as $\hbar \to 0$ exists and equals ${\rm Convex\,\, Hull}_{\T^d}\,{(f_1,\ldots, f_d)(M)}$.
 \end{theorem}

A version with the generic assumptions is Theorem~\ref{thm:CH_simpleF}. This result  has an interesting application to symplectic geometry of group actions (Section \ref{sect:circle_BTO}). 

\subsection*{Structure of the paper} In Section~\ref{sec:sc} we review basic operator theory and discuss the implications  derived from having
a semiclassical quantization, which are used in the proof of the theorem above. In Section~\ref{sec:proofofmaintheorem} we prove 
the theorem; the proof is long and for clarity we divide it into several subsections. In Section~\ref{sect:circle_BTO} we explain in detail the application to symplectic geometry above. We conclude with a section giving a few remarks and two appendices: one on semiclassical operators  
and one on convex hull in tori. The readers may consult the appendices as needed in order to understand the proof of the main theorem.

\section{Semiclassical quantization} \label{sec:sc}

The goal of this section is to prepare the grounds for the proof of the main theorem, by identifying important consequences of the axioms
in the notion of quantization  (Definition~\ref{0}). We will also explain the precise meaning of semiclassical operator, and principal symbol, 
in this context.

\begin{cor}
If $f \in \mathcal{A}_0$ is bounded, then the operator $\mathrm{Op}_{\hbar}(f)$ is bounded and satisfies
\label{cor:bounded}
\begin{equation} \|\mathrm{Op}_{\hbar}(f)\| \leq \|f \|_{\infty} + \mathcal{O}(\hbar), \label{eq:bound}\end{equation}
where $\|f\|_{\infty}$ is the uniform norm of $f$.
\end{cor}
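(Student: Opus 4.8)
The plan is to bound $\mathrm{Op}_{\hbar}(f)$ by passing through the nonnegative operator $\mathrm{Op}_{\hbar}(f)^*\mathrm{Op}_{\hbar}(f)$: axioms \ref{item:composition} and \ref{item:reality} should identify this operator with $\mathrm{Op}_{\hbar}(|f|^2)$ up to an error of norm $\mathcal{O}(\hbar)$, and then quasi\--positivity \ref{item:garding} together with the Rayleigh identity \eqref{eq:rayleigh} should control $\mathrm{Op}_{\hbar}(|f|^2)$. Throughout, set $c:=\|f\|_{\infty}$; one may assume $c>0$, the case $f\equiv 0$ being trivial by linearity.

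Two preliminary facts. First, $T:=\mathrm{Op}_{\hbar}(f)$ is closed: applying axiom \ref{item:reality} twice gives $T^{**}=\mathrm{Op}_{\hbar}(\bar f)^*=\mathrm{Op}_{\hbar}(f)=T$, and since $T^*=\mathrm{Op}_{\hbar}(\bar f)$ is densely defined, $T$ is closable with closure $T^{**}$, whence $T=\overline{T}$. Consequently, by von Neumann's theorem (see e.g.\ \cite{ReedSimon}), $T^*T$ is nonnegative and self\--adjoint, with $\mathcal{D}(T^*T)\subset\mathcal{D}(T)$. Second, I would estimate $\mathrm{Op}_{\hbar}(|f|^2)$: both $|f|^2=f\bar f$ and $c^2-|f|^2$ lie in $\mathcal{A}_0$ and are nonnegative, so axiom \ref{item:garding} applied to each, combined with \ref{item:normalization} and linearity, yields constants $C_1,C_2>0$ with $-C_1\hbar\,\mathrm{Id}\leq\mathrm{Op}_{\hbar}(|f|^2)\leq(c^2+C_2\hbar)\,\mathrm{Id}$; since $|f|^2$ is real\--valued, \ref{item:reality} makes $\mathrm{Op}_{\hbar}(|f|^2)$ self\--adjoint, and \eqref{eq:rayleigh} then shows it is bounded with $\|\mathrm{Op}_{\hbar}(|f|^2)\|\leq c^2+\mathcal{O}(\hbar)$.

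To conclude, I would use axioms \ref{item:composition} and \ref{item:reality} to write $T^*T=\mathrm{Op}_{\hbar}(\bar f)\mathrm{Op}_{\hbar}(f)=\mathrm{Op}_{\hbar}(|f|^2)+R_{\hbar}$ with $\|R_{\hbar}\|=\mathcal{O}(\hbar)$, so that $T^*T$ is bounded with $\|T^*T\|\leq c^2+\mathcal{O}(\hbar)$. Being self\--adjoint and bounded, $T^*T$ is everywhere defined, hence so is $T$, because $\mathcal{D}(T)\supset\mathcal{D}(T^*T)=\Hil_{\hbar}$; then for every $u$ one has $\|Tu\|^2=\scal{(T^*T)u}{u}\leq\|T^*T\|\,\|u\|^2\leq(c^2+\mathcal{O}(\hbar))\|u\|^2$, and taking square roots gives $\|\mathrm{Op}_{\hbar}(f)\|\leq\sqrt{c^2+\mathcal{O}(\hbar)}\leq c+\mathcal{O}(\hbar)=\|f\|_{\infty}+\mathcal{O}(\hbar)$.

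The hard part is entirely soft\--analytic: one must make sure that the numerical bound on $T^*T$ genuinely translates into an operator\--norm bound on $\mathrm{Op}_{\hbar}(f)$, which a priori is only densely defined and possibly unbounded. This is precisely why the closedness of $T$ and von Neumann's theorem cannot be bypassed — without them the composition $T^*T$ could carry too small a domain to detect boundedness of $T$. It is also worth noting that one should \emph{not} try to reduce to the self\--adjoint case by splitting $f$ into real and imaginary parts, as that route only yields the weaker bound $\|\operatorname{Re}f\|_{\infty}+\|\operatorname{Im}f\|_{\infty}$ in place of $\|f\|_{\infty}$.
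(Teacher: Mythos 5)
Your proof follows essentially the same route as the paper's: control $\mathrm{Op}_{\hbar}(|f|^2)$ via quasi\--positivity \ref{item:garding}, normalization \ref{item:normalization} and the Rayleigh identity \eqref{eq:rayleigh}, relate it to $\mathrm{Op}_{\hbar}(f)^*\mathrm{Op}_{\hbar}(f)$ by axioms \ref{item:composition} and \ref{item:reality}, and finish with Cauchy--Schwarz. The one place you do more than the paper is the domain bookkeeping: the paper writes $\|\mathrm{Op}_{\hbar}(f)u\|^2 = \scal{\mathrm{Op}_{\hbar}(f)^*\mathrm{Op}_{\hbar}(f)u}{u}$ for $u$ only assumed in $\mathcal{D}(\mathrm{Op}_{\hbar}(f))$, which strictly requires $u\in\mathcal{D}(T^*T)$; your detour through closedness of $T$ and von Neumann's theorem correctly shows that $T^*T$, and hence $T$, is everywhere defined before invoking Cauchy--Schwarz, which tidies up a point the paper leaves implicit.
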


\begin{proof}
Axioms \ref{item:normalization} and \ref{item:garding} yield that 
$\mathrm{Op}_{\hbar}\left(|f|^2 \right)  \leq  \|f \|_{\infty}^2 \ \mathrm{Id} + \mathcal{O}(\hbar).$
Since $\mathrm{Op}_{\hbar}(|f|^2)$ is self-adjoint, this implies, by formula (\ref{eq:rayleigh}), that its norm satisfies $\left\| \mathrm{Op}_{\hbar}(|f|^2) \right\| \leq \|f \|_{\infty}^2 +  \mathcal{O}(\hbar)$; because of axioms \ref{item:composition} and \ref{item:reality}, this means that
\[ \left\| \mathrm{Op}_{\hbar}(f)^* \mathrm{Op}_{\hbar}(f) \right\| \leq \|f \|_{\infty}^2 +  \mathcal{O}(\hbar).\]
But this, in turn, yields the boundedness of $\mathrm{Op}_{\hbar}(f)$; indeed, if $u \in \Hil$ belongs to the domain of $\mathrm{Op}_{\hbar}(f)$, then we get by the Cauchy-Schwarz inequality that
\[ |\scal{\mathrm{Op}_{\hbar}(f)u}{\mathrm{Op}_{\hbar}(f)u}| = |\scal{\mathrm{Op}_{\hbar}(f)^* \mathrm{Op}_{\hbar}(f)u}{u}|  \leq \left\| \mathrm{Op}_{\hbar}(f)^* \mathrm{Op}_{\hbar}(f) u \right\| \| u \|.\]
Therefore, we obtain that $\|\mathrm{Op}_{\hbar}(f) u\| \leq \sqrt{\left\| \mathrm{Op}_{\hbar}(f)^* \mathrm{Op}_{\hbar}(f) \right\|} \ \| u \|,$
which implies that $\mathrm{Op}_{\hbar}(f)$ is bounded and that its norm satisfies  (\ref{eq:bound}).
\end{proof}

We state another useful corollary of our axioms regarding  invertibility.

\begin{cor}
\label{cor:inverse}
Let $f \in \mathcal{A}_0$ be bounded. The following  are equivalent:
\begin{itemize}
\item there exists $\hbar_0 \in I$ such that for every $\hbar \leq \hbar_0$, $\mathrm{Op}_{\hbar}(f)$ is invertible and the norm of its inverse is uniformly bounded in $\hbar$, 
\item there exists $c > 0$ such that $|f| \geq c$.
\end{itemize}
\end{cor}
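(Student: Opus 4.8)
The plan is to prove the two implications separately, using Corollary~\ref{cor:bounded} and the quasi-positivity/normalization axioms as the main tools.

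For the direction ``$|f| \geq c$ implies uniform invertibility'', I would argue as follows. Since $f$ never vanishes (indeed $|f| \geq c > 0$), the hypothesis on $\mathcal{A}_0$ guarantees that $g := 1/f \in \mathcal{A}_0$, and $g$ is bounded with $\|g\|_{\infty} \leq 1/c$. By the composition axiom \ref{item:composition} together with \ref{item:normalization}, $\mathrm{Op}_{\hbar}(f)\mathrm{Op}_{\hbar}(g) = \mathrm{Op}_{\hbar}(fg) + \mathcal{O}(\hbar) = \mathrm{Id} + \mathcal{O}(\hbar)$, and similarly $\mathrm{Op}_{\hbar}(g)\mathrm{Op}_{\hbar}(f) = \mathrm{Id} + \mathcal{O}(\hbar)$. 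Hence for $\hbar$ small enough both $\mathrm{Op}_{\hbar}(f)\mathrm{Op}_{\hbar}(g)$ and $\mathrm{Op}_{\hbar}(g)\mathrm{Op}_{\hbar}(f)$ are invertible (as $\mathrm{Id}$ plus a small-norm operator, with inverse given by a Neumann series whose norm is bounded by, say, $2$ for $\hbar \leq \hbar_0$). A standard algebraic argument then shows $\mathrm{Op}_{\hbar}(f)$ itself is invertible: from $\mathrm{Op}_{\hbar}(f) A = \mathrm{Id}$ and $B \,\mathrm{Op}_{\hbar}(f) = \mathrm{Id}$ with $A, B$ bounded one gets $A = B$ is a two-sided bounded inverse. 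The norm of this inverse is $\|\mathrm{Op}_{\hbar}(g)(\mathrm{Op}_{\hbar}(f)\mathrm{Op}_{\hbar}(g))^{-1}\| \leq \|\mathrm{Op}_{\hbar}(g)\| \cdot 2 \leq 2(1/c + \mathcal{O}(\hbar))$ by Corollary~\ref{cor:bounded}, which is uniformly bounded in $\hbar$.

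For the converse, suppose instead that $\inf_M |f| = 0$, and aim for a contradiction with uniform invertibility. Fix $\epsilon > 0$; by definition of the infimum there is a point $x_0 \in M$ with $|f(x_0)| < \epsilon$, and by continuity an open set on which $|f| < \epsilon$. Choose a nonzero $\chi \in \mathcal{A}_0$ with compact support contained in that open set, so that $|f|^2 |\chi|^2 \leq \epsilon^2 |\chi|^2$ pointwise, i.e. $\epsilon^2 |\chi|^2 - |f\chi|^2 \geq 0$. Applying axioms \ref{item:garding} and \ref{item:normalization} to this nonnegative function gives $\mathrm{Op}_{\hbar}(|f\chi|^2) \leq \epsilon^2 \mathrm{Op}_{\hbar}(|\chi|^2) + \mathcal{O}(\hbar) \leq \epsilon^2(\|\chi\|_\infty^2 + \mathcal{O}(\hbar))\mathrm{Id} + \mathcal{O}(\hbar)$. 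Using \ref{item:composition}, \ref{item:reality} and the product formula \ref{item:product}, one converts this into a bound $\|\mathrm{Op}_{\hbar}(f)\mathrm{Op}_{\hbar}(\chi)\|^2 = \|\mathrm{Op}_{\hbar}(f\chi)\|^2 + \mathcal{O}(\hbar) \leq \epsilon^2 \|\chi\|_\infty^2 + \mathcal{O}(\hbar)$. On the other hand, if $\mathrm{Op}_{\hbar}(f)$ were invertible with $\|\mathrm{Op}_{\hbar}(f)^{-1}\| \leq K$ uniformly, then $\|\mathrm{Op}_{\hbar}(\chi) v\| = \|\mathrm{Op}_{\hbar}(f)^{-1}\mathrm{Op}_{\hbar}(f)\mathrm{Op}_{\hbar}(\chi)v\| \leq K\|\mathrm{Op}_{\hbar}(f)\mathrm{Op}_{\hbar}(\chi)\|\|v\|$, whence $\|\mathrm{Op}_{\hbar}(\chi)\| \leq K(\epsilon\|\chi\|_\infty + \mathcal{O}(\hbar))$. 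Taking $\liminf_{\hbar \to 0}$ and using the non-degeneracy axiom \ref{item:non-degeneracy}, $0 < \liminf_{\hbar\to 0}\|\mathrm{Op}_{\hbar}(\chi)\| \leq K\epsilon \|\chi\|_\infty$; since $\epsilon$ was arbitrary and $K, \|\chi\|_\infty$ can be taken to depend only on the fixed choices, letting $\epsilon \to 0$ (re-choosing $\chi$ each time, but keeping $\|\chi\|_\infty$ controlled — e.g. normalizing $\|\chi\|_\infty = 1$) forces $\liminf_{\hbar \to 0}\|\mathrm{Op}_{\hbar}(\chi)\| = 0$, contradicting \ref{item:non-degeneracy}.

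The main obstacle I anticipate is the bookkeeping in the second implication: one must be careful that the cutoff $\chi$ can be chosen with $\|\chi\|_\infty = 1$ (or otherwise uniformly bounded) independently of $\epsilon$, so that the non-degeneracy constant does not degenerate as $\epsilon \to 0$; and one must track that all the $\mathcal{O}(\hbar)$ error terms are genuinely uniform and vanish in the $\liminf$. A cleaner packaging might be to prove the contrapositive directly: not-($\exists c$) is exactly $\inf|f| = 0$, and the argument above produces, for each $\epsilon$, a compactly supported $\chi_\epsilon$ with $\|\chi_\epsilon\|_\infty = 1$ and $\liminf_\hbar \|\mathrm{Op}_\hbar(f)\mathrm{Op}_\hbar(\chi_\epsilon)\| \leq \epsilon$, while $\liminf_\hbar\|\mathrm{Op}_\hbar(\chi_\epsilon)\| \geq \delta_0 > 0$ — but a priori $\delta_0$ could depend on $\epsilon$, so one should either extract a quantitative lower bound from \ref{item:non-degeneracy} uniform over a suitable family of cutoffs, or simply phrase the conclusion as: for every $\hbar_0$ and every $K$ there exist $\hbar \leq \hbar_0$ at which $\mathrm{Op}_\hbar(f)$ fails to have inverse of norm $\leq K$, which is the negation of the first bullet and suffices.
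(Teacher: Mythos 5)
The first implication ($|f| \geq c$ gives uniform invertibility) is fine and is essentially the paper's argument, just packaged a bit differently: you observe directly that left and right approximate inverses yield a two-sided bounded inverse, whereas the paper separates injectivity and surjectivity and then cites the inverse mapping theorem; both routes bound $\|\mathrm{Op}_{\hbar}(f)^{-1}\|$ by $\|1/f\|_{\infty} + \mathcal{O}(\hbar)$ via Corollary~\ref{cor:bounded} and Neumann series.

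The second implication, however, has a genuine gap, and it is exactly the one you flag yourself. Your contrapositive argument produces, for each $\epsilon>0$, a normalized compactly supported $\chi_{\epsilon}$ with
$\liminf_{\hbar\to 0}\|\mathrm{Op}_{\hbar}(\chi_{\epsilon})\|\leq K\epsilon$,
while axiom \ref{item:non-degeneracy} only guarantees $\liminf_{\hbar\to 0}\|\mathrm{Op}_{\hbar}(\chi_{\epsilon})\|>0$ for each \emph{fixed} $\chi_{\epsilon}$. There is no contradiction for any single $\epsilon$, and the lower bound supplied by non-degeneracy is allowed to depend on the cutoff, so it can degenerate as $\epsilon\to 0$ (the support shrinks, or escapes to infinity). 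The axioms of Section~\ref{sect:semiclass} are deliberately minimal and carry no quantitative version of \ref{item:non-degeneracy} that is uniform over a family of cutoffs, so neither of your proposed repairs is available without adding new axioms.

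The paper sidesteps this entirely by \emph{not} passing to the contrapositive. Fix an arbitrary $m\in M$ and set $c$ so that $\|\mathrm{Op}_{\hbar}(f)^{-1}\|\leq 1/c$; this gives $\|\mathrm{Op}_{\hbar}(f)u\|\geq c\|u\|$ for all $u$. Choose a compact $\widetilde K$ containing $m$ on which $|f-f(m)|\leq c/4$ and a cutoff $\chi$ supported there (so the cutoff is tied to $c$, not to a vanishing $\epsilon$). The key extra ingredient you are missing is the construction of a unit vector $u_{\hbar}$ with $u_{\hbar}=\mathrm{Op}_{\hbar}(\chi)u_{\hbar}+\mathcal{O}(\hbar)$ (Step~3 of Lemma~11 in \cite{PelPolVu}: take $\eta$ supported in $\{\chi=1\}$, use \ref{item:non-degeneracy} once to get $v_{\hbar}$ with $\|\mathrm{Op}_{\hbar}(\eta)v_{\hbar}\|\geq\gamma/2$, and normalize $u_{\hbar}\propto\mathrm{Op}_{\hbar}(\eta)v_{\hbar}$). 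Testing the invertibility bound on this localized $u_{\hbar}$ and comparing with $\|\mathrm{Op}_{\hbar}(\chi f)\|\leq\|\chi f\|_{\infty}+\mathcal{O}(\hbar)$ forces $\|\chi f\|_{\infty}\geq c/2$, hence $|f(m)|\geq c/4$. Since $m$ was arbitrary, $|f|\geq c/4$ everywhere. This replaces the operator-norm lower bound on $\mathrm{Op}_{\hbar}(\chi)$ (which you cannot control uniformly) with an evaluation of the invertibility estimate on a concrete localized vector, which is the step your proposal needs.
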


\begin{proof}
Note that since $f$ is bounded, the previous corollary yields that $\mathrm{Op}_{\hbar}(f)$ is bounded with norm smaller than 
$ \|f\|_{\infty} + \mathcal{O}(\hbar).$
Assume that $\mathrm{Op}_{\hbar}(f)$ is invertible for $\hbar \leq \hbar_0$ with 
$ \| \mathrm{Op}_{\hbar}(f)^{-1} \| \leq 1/c$
for every $\hbar \leq \hbar_0$, for some constant $c > 0$. Then from
$\mathrm{Op}_{\hbar}(f)^{-1} \mathrm{Op}_{\hbar}(f) = \mathrm{Id},$
we derive the following inequality:
\begin{equation} \forall u \in \Hil_{\hbar} \qquad  \| \mathrm{Op}_{\hbar}(f) u \| \geq \frac{\| u \|}{\| \mathrm{Op}_{\hbar}(f)^{-1} \|} \geq c \|u\|. \label{eq:bound_below}\end{equation}
Let $m \in M$ and choose a compact set $\widetilde{K} \subset M$ such that $|f(p) - f(m)| \leq \tfrac{c}{4}$ for all $p \in \widetilde{K}$. Let $\chi \geq 0$ be a smooth function identically equal to one on a compact set $K$ containing $m$ included in the interior of $\widetilde{K}$ and with compact support contained in $\widetilde{K}$. We claim that there exists $u_{\hbar} \in \Hil_{\hbar}$ of unit norm such that
\begin{equation} u_{\hbar} =  \mathrm{Op}_{\hbar}(\chi) u_{\hbar} + \mathcal{O}(\hbar).  \label{eq:vector}\end{equation}
This claim is established in Step 3 of the proof of Lemma 11 in \cite{PelPolVu}, but we present a sketch of its proof for the sake of completeness. Let $\eta$ be a smooth, not identically vanishing function supported on $K$. By axiom \ref{item:non-degeneracy}, there exists $\gamma > 0$ such that 
$\| \mathrm{Op}_{\hbar}(\eta)\| \geq \gamma$
for every $\hbar \leq \hbar_0$, so there exists some $v_{\hbar} \in \Hil_{\hbar}$ of norm 1 and such that 
$\| \mathrm{Op}_{\hbar}(\eta) v_{\hbar} \| > \gamma/2. $
Choose $u_{\hbar}$ as follows:
\[ u_{\hbar} = \frac{1}{\| \mathrm{Op}_{\hbar}(\eta) v_{\hbar} \|}  \mathrm{Op}_{\hbar}(\eta) v_{\hbar}.\]
Thanks to axiom \ref{item:product}, we obtain 
\[ \mathrm{Op}_{\hbar}(\chi) u_{\hbar} =  \frac{1}{\| \mathrm{Op}_{\hbar}(\eta) v_{\hbar} \|}  \mathrm{Op}_{\hbar}(\chi \eta) v_{\hbar} + \mathcal{O}(\hbar) \]
which allows us to conclude that $u_{\hbar}$ satisfies formula (\ref{eq:vector}), since $\chi \eta = \eta$.

We choose such a $u_{\hbar}$. By axiom \ref{item:product}, we get that
\[ \| \mathrm{Op}_{\hbar}(\chi f) u_{\hbar} - \mathrm{Op}_{\hbar}(f)\mathrm{Op}_{\hbar}(\chi) u_{\hbar}  \| = \mathcal{O}(\hbar). \] 
Combining this estimate with the fact that $u_{\hbar}$ satisfies equation (\ref{eq:vector}) yields
$\| \mathrm{Op}_{\hbar}(\chi f) u_{\hbar} - \mathrm{Op}_{\hbar}(f) u_{\hbar}  \| = \mathcal{O}(\hbar)$
and using equations (\ref{eq:bound}) and (\ref{eq:bound_below}), this gives 
\[ \|\chi f \|_{\infty} + \mathcal{O}(\hbar) \geq \| \mathrm{Op}_{\hbar}(\chi f) u_{\hbar}  \| \geq \| \mathrm{Op}_{\hbar}(f) u_{\hbar}\| + \mathcal{O}(\hbar)  \geq c + \mathcal{O}(\hbar). \]
By choosing $\hbar$ sufficiently small, this yields $\|\chi f \|_{\infty} \geq c/2$. Since $0 \leq \chi \leq 1$, this means that there exists $p \in \widetilde{K}$ such that $|f(p)| \geq c/2$. But by our choice of $\widetilde{K}$, this yields $|f(m)| \geq |f(p)| - \tfrac{c}{4} \geq \tfrac{c}{4}$.\\

Conversely, assume that $|f| \geq c$ for some constant $c > 0$. Then $1/f$ is bounded, thus axiom \ref{item:composition} implies that 
$\mathrm{Op}_{\hbar}(f)\mathrm{Op}_{\hbar}\left(\frac{1}{f} \right) = \mathrm{Id} + R_{\hbar}$
where $R_{\hbar}$ is bounded with norm $\mathcal{O}(\hbar)$. By a standard result (see for instance \cite[Theorem A3.30]{HislopSigal}), there exists $\hbar_1 \in I$ such that $\mathrm{Id} + R_{\hbar}$ is invertible whenever $\hbar \leq \hbar_{1}$, thus for such $\hbar$
\[ \mathrm{Op}_{\hbar}(f) \mathrm{Op}_{\hbar}\left(\frac{1}{f} \right) (\mathrm{Id} + R_{\hbar})^{-1} = \mathrm{Id}, \]
therefore $\mathrm{Op}_{\hbar}(f)$ is surjective. Similarly, there exists a bounded operator $S_{\hbar}$ with norm $\mathcal{O}(\hbar)$ such that 
$\mathrm{Op}_{\hbar}\left(\frac{1}{f} \right) \mathrm{Op}_{\hbar}(f) = \mathrm{Id} + S_{\hbar}$
and there exists $\hbar_2 \in I$ such that for every $\hbar \leq \hbar_{2}$, $\mathrm{Id} + S_{\hbar}$ is invertible, so
\[ ( \mathrm{Id} + S_{\hbar})^{-1} \mathrm{Op}_{\hbar}\left(\frac{1}{f} \right) \mathrm{Op}_{\hbar}(f) = \mathrm{Id}\]
and hence $\mathrm{Op}_{\hbar}(f)$ is injective. Consequently, $\mathrm{Op}_{\hbar}(f)$ is bijective for every $\hbar \leq \hbar_0 := \min(\hbar_1,\hbar_2)$. Since $\mathrm{Op}_{\hbar}(f)$ is a bounded operator, the inverse mapping theorem \cite[Theorem III.11]{ReedSimon} implies that it is invertible for every $\hbar \leq \hbar_0$. It remains to show that the norm of its inverse is uniformly bounded in $\hbar$. For this we notice that, by Corollary \ref{cor:bounded}, $\mathrm{Op}_{\hbar}(1/f)$ is bounded since $1/f$ is bounded, and we have the inequality
\[ \left\| \mathrm{Op}_{\hbar}(f)^{-1} \right\| \leq \left\| \mathrm{Id} + S_{\hbar} \right\|^{-1} \left\|\mathrm{Op}_{\hbar}\left(\frac{1}{f}\right) \right\| \leq \left\|\frac{1}{f}\right\|_{\infty} + \mathcal{O}(\hbar).   \]
This implies that the norm of $\mathrm{Op}_{\hbar}(f)^{-1}$ is uniformly bounded in $\hbar$.
\end{proof}

\begin{remark}
\label{rmk:norm_inverse}
Note that as a byproduct of the proof of the second point of the corollary, we have that if $f$ is bounded and $|f| \geq c$ for some $c > 0$, then 
$\left\| \mathrm{Op}_{\hbar}(f)^{-1} - \mathrm{Op}_{\hbar}\left(\frac{1}{f}\right) \right\| = \mathcal{O}(\hbar).$
\end{remark}

{We will need one additional axiom for the proof of our main result (Theorem \ref{thm:CH_simpleF}); it may seem quite strong but we do not know how to proceed without it. This axiom is 
\begin{enumerate}[label=(Q7)]
\item \label{item:func_calc} if $g \in \classe{\infty}(\R,\R)$ is bounded, then for every bounded and real-valued $f \in \mathcal{A}_0$, the operator $g(\mathrm{Op}_{\hbar}(f))$ (defined using functional calculus for self-adjoint operators, see e.g. \cite[Theorem VII.1]{ReedSimon}) is such that 
$\| g(\mathrm{Op}_{\hbar}(f)) - \mathrm{Op}_{\hbar}(g \circ f) \| = \mathcal{O}(\hbar)$ (\emph{functional calculus}).
\end{enumerate}
Note that it makes sense to talk about $\mathrm{Op}_{\hbar}(g \circ f)$ since $g \circ f$ belongs to $\mathcal{A}_0$ by the properties of the latter. Note also that this axiom is satisfied by Berezin-Toeplitz operators \cite[Proposition 12]{Cha} and pseudo-differential operators \cite[Theorem 8.7]{DimSjo}.}

We now introduce an algebra $\mathcal{A}_{I}$ whose elements are families $f_I = (f_{\hbar})_{\hbar \in I}$ of functions in $\mathcal{A}_0$ of the form
$f_{\hbar} = f_0 + \hbar f_{1,\hbar}$
with $f_0 \in \mathcal{A}_0$ and where the family $(f_{1,\hbar})_{\hbar \in I}$ is uniformly bounded in $\hbar$ and supported in a compact subset of $M$ which does not depend on $\hbar$. If $f_0$ is also compactly supported, we say that $f_{I}$ is compactly supported. We have a map
\[ \mathrm{Op}: \mathcal{A}_I \to \prod_{\hbar \in I} \mathcal{L}(\Hil_{\hbar}),  f_I  = (f_{\hbar})_{\hbar \in I} \mapsto  (\mathrm{Op}_{\hbar}(f_{\hbar}))_{\hbar \in I}.\]

\begin{definition} \label{so}
A \emph{semiclassical operator} is any element of the image of this map. We denote by $\Psi := \mathrm{Op}(\mathcal{A}_I)$ the set of semiclassical operators.
\end{definition}

We want to define a map $\sigma: \Psi \to \mathcal{A}_0$ which associates to $\mathrm{Op}_{\hbar}(f_I)$ the function $f_0 \in \mathcal{A}_0$. However, we need to check that the latter is unique.

\begin{lemma}
The map $\sigma$ is well-defined. Given $T = (T_{\hbar})_{\hbar \in I} \in \Psi$, we call $\sigma(T)$ the \emph{principal symbol} of $T$. 
\end{lemma}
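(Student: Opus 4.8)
The statement that $\sigma$ is well defined amounts to the following: if $f_I = (f_0 + \hbar f_{1,\hbar})_{\hbar\in I}$ and $g_I = (g_0 + \hbar g_{1,\hbar})_{\hbar\in I}$ are elements of $\mathcal{A}_I$ with $\mathrm{Op}_{\hbar}(f_{\hbar}) = \mathrm{Op}_{\hbar}(g_{\hbar})$ for every $\hbar\in I$, then $f_0 = g_0$. Setting $h_I := f_I - g_I \in \mathcal{A}_I$ and using that each $\mathrm{Op}_{\hbar}$ is $\mathbb{C}$-linear, the plan is first to reduce to proving: if $h_{\hbar} = h_0 + \hbar h_{1,\hbar}$ with $h_0\in\mathcal{A}_0$ and $(h_{1,\hbar})_{\hbar}$ uniformly bounded and supported in a compact subset of $M$ independent of $\hbar$, and if $\mathrm{Op}_{\hbar}(h_{\hbar}) = 0$ for all $\hbar \in I$, then $h_0 \equiv 0$. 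I would argue this by contradiction: suppose $h_0(m_0) \neq 0$ for some $m_0\in M$.

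By continuity there is an open neighbourhood $V$ of $m_0$ and $\delta > 0$ with $|h_0| \geq \delta$ on $V$. I would pick a smooth function $\chi\geq 0$ supported in $V$ with $\chi(m_0) = 1$, and set $g := \chi/h_0$, which is a well-defined smooth compactly supported function (it is supported in $\{\chi\neq 0\}\subset V$, where $h_0$ does not vanish), hence lies in $\mathcal{A}_0$. Then I would evaluate $\mathrm{Op}_{\hbar}(h_{\hbar})\,\mathrm{Op}_{\hbar}(g)$ in two ways. On the one hand it is the zero operator, since $\mathrm{Op}_{\hbar}(h_{\hbar}) = 0$. On the other hand, writing $\mathrm{Op}_{\hbar}(h_{\hbar}) = \mathrm{Op}_{\hbar}(h_0) + \hbar\,\mathrm{Op}_{\hbar}(h_{1,\hbar})$, the product formula \ref{item:product} applied to the fixed function $h_0$ and the compactly supported $g$ gives $\mathrm{Op}_{\hbar}(h_0)\mathrm{Op}_{\hbar}(g) = \mathrm{Op}_{\hbar}(h_0 g) + \mathcal{O}(\hbar) = \mathrm{Op}_{\hbar}(\chi) + \mathcal{O}(\hbar)$, since $h_0 g = \chi$. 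For the lower order part, Corollary \ref{cor:bounded} gives $\|\mathrm{Op}_{\hbar}(g)\| \leq \|g\|_{\infty} + \mathcal{O}(\hbar)$, while $\mathrm{Op}_{\hbar}(h_{1,\hbar})$ is bounded uniformly in $\hbar$ because $(h_{1,\hbar})$ is a uniformly bounded, uniformly compactly supported family, so $\hbar\,\mathrm{Op}_{\hbar}(h_{1,\hbar})\mathrm{Op}_{\hbar}(g) = \mathcal{O}(\hbar)$. Combining these identities yields $\mathrm{Op}_{\hbar}(\chi) = \mathcal{O}(\hbar)$, that is $\|\mathrm{Op}_{\hbar}(\chi)\| \to 0$ as $\hbar\to 0$.

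This contradicts the non-degeneracy axiom \ref{item:non-degeneracy}: since $\chi$ is not identically zero and has compact support, $\liminf_{\hbar\to 0}\|\mathrm{Op}_{\hbar}(\chi)\| > 0$. Hence $h_0(m_0) = 0$ for every $m_0\in M$, so $h_0\equiv 0$, i.e. $f_0 = g_0$, which is what we wanted.

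The one delicate point in this plan is the remainder estimate $\hbar\,\mathrm{Op}_{\hbar}(h_{1,\hbar})\mathrm{Op}_{\hbar}(g) = \mathcal{O}(\hbar)$, since it requires a uniform-in-$\hbar$ operator bound for the subprincipal family $\mathrm{Op}_{\hbar}(h_{1,\hbar})$; this is precisely where the requirement in the definition of $\mathcal{A}_I$ that $(h_{1,\hbar})$ be uniformly bounded and supported in a fixed compact set is used, the point being that the Gårding-type constant in axiom \ref{item:garding} and the Cauchy--Schwarz step in the proof of Corollary \ref{cor:bounded} can then be chosen independently of $\hbar$. Everything else is a direct application of the axioms, together with the elementary observation that $\chi/h_0$ is again an admissible compactly supported symbol.
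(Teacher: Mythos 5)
Your proof is correct and rests on the same ingredients as the paper's --- the reduction by linearity to showing $\mathrm{Op}(f_I)=0 \Rightarrow f_0=0$, the product formula~\ref{item:product}, Corollary~\ref{cor:bounded}, a uniform bound on the subprincipal family, and the non-degeneracy axiom~\ref{item:non-degeneracy} --- but it routes through a slightly different test function. The paper takes an \emph{arbitrary} compactly supported $\chi$, multiplies on the right by $\mathrm{Op}_\hbar(\chi)$, derives $\|\mathrm{Op}_\hbar(f_0\chi)\| = \mathcal{O}(\hbar)$ (via its equation~(\ref{eq:product_bis})), and then concludes $f_0\chi=0$ from~\ref{item:non-degeneracy}; letting $\chi$ vary gives $f_0=0$. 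You instead argue by contradiction: localizing near a hypothetical point where $h_0\neq 0$, you form the compactly supported quotient $g=\chi/h_0$ and multiply by $\mathrm{Op}_\hbar(g)$ so that the principal product collapses exactly to $\mathrm{Op}_\hbar(\chi)$, and~\ref{item:non-degeneracy} applied to $\chi$ itself yields the contradiction. Your variant costs a small extra step --- verifying that $g$ is smooth, compactly supported, hence in $\mathcal{A}_0$ (true, since $\mathcal{A}_0$ contains the compactly supported functions) --- while the paper's version avoids the division and is marginally leaner; both are valid. You are also right to flag the uniform-in-$\hbar$ bound on $\|\mathrm{Op}_\hbar(h_{1,\hbar})\|$ as the delicate point: the paper's own proof relies on exactly the same uniformity when it invokes Corollary~\ref{cor:bounded} for the $\hbar$-dependent family to establish~(\ref{eq:product_bis}), so your argument is no worse off than the original in this respect, and both appeal to the uniform boundedness and fixed compact support built into the definition of $\mathcal{A}_I$ to justify it.
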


\begin{proof}
This proof already appeared in \cite[Section 4]{PelPolVu} but we recall it here for the sake of completeness. Let $f_I \in \mathcal{A}_I$ be such that $\mathrm{Op}(f_I) = 0$. Since the family $(f_{1,\hbar})_{\hbar \in I}$ is uniformly bounded in $\hbar$, we deduce from Corollary \ref{cor:bounded} that
\begin{equation} \| \mathrm{Op}_{\hbar}(f_{\hbar}) - \mathrm{Op}_{\hbar}(f_0) \| = \mathcal{O}(\hbar). \label{eq:product_bis} \end{equation}
Let $\chi$ be any compactly supported smooth function. Using the previous estimate and axiom \ref{item:product}, we obtain that 
\[ \left\| \mathrm{Op}_{\hbar}(f_{\hbar}) \mathrm{Op}_{\hbar}(\chi) - \mathrm{Op}_{\hbar}(f_{\hbar} \chi)) \right\| = \mathcal{O}(\hbar), \] 
hence $\left\|\mathrm{Op}_{\hbar}(f_{\hbar} \chi)) \right\| = \mathcal{O}(\hbar)$. Consequently, applying Equation (\ref{eq:product_bis}) to $f_{\hbar} \chi$  yields the equality $\left\|\mathrm{Op}_{\hbar}(f_{0} \chi)) \right\| = \mathcal{O}(\hbar)$. Therefore, by axiom \ref{item:non-degeneracy}, we conclude that $f_0 \chi =0$. Since $\chi$ was arbitrary, this means that $f_0 = 0$.
\end{proof}

By axiom \ref{item:normalization}, the principal symbol of the identity is $\sigma(\mathrm{Id}) = 1$. Axiom \ref{item:reality} implies that the principal symbol of a self-adjoint semiclassical operator is real-valued. We can also draw conclusions about the principal symbol of a unitary operator. 

\begin{lemma}
\label{lm:symb_unitary}
The principal symbol of a unitary semiclassical operator is $\mathbb{S}^1$-valued.
\end{lemma}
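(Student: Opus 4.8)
The plan is to show that if $U = (U_\hbar)_{\hbar \in I}$ is a unitary semiclassical operator with principal symbol $f_0 = \sigma(U)$, then $|f_0| \equiv 1$ on $M$. The natural strategy is to exploit the unitarity relation $U_\hbar^* U_\hbar = \mathrm{Id}$ together with axiom \ref{item:reality} to read off an identity for the principal symbol of $U_\hbar^* U_\hbar$. Writing $U_\hbar = \mathrm{Op}_\hbar(f_\hbar)$ with $f_\hbar = f_0 + \hbar f_{1,\hbar}$, axiom \ref{item:reality} gives $U_\hbar^* = \mathrm{Op}_\hbar(\overline{f_\hbar})$, and axiom \ref{item:composition} (applicable because, by Corollary \ref{cor:bounded}, $U_\hbar$ and $U_\hbar^*$ are bounded once $f_0$ is bounded — and $f_{1,\hbar}$ is uniformly bounded with compact support, so $f_\hbar$ is bounded provided $f_0$ is) yields
\[ \| U_\hbar^* U_\hbar - \mathrm{Op}_\hbar(|f_\hbar|^2) \| = \mathcal{O}(\hbar). \]
Since $U_\hbar^* U_\hbar = \mathrm{Id} = \mathrm{Op}_\hbar(1)$, this gives $\| \mathrm{Op}_\hbar(|f_\hbar|^2 - 1) \| = \mathcal{O}(\hbar)$, and since $|f_\hbar|^2 = |f_0|^2 + \mathcal{O}(\hbar)$ with the error uniformly bounded and compactly supported, Corollary \ref{cor:bounded} (or equation (\ref{eq:product_bis})) upgrades this to $\| \mathrm{Op}_\hbar(|f_0|^2 - 1) \| = \mathcal{O}(\hbar)$. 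Now the well-definedness argument for the principal symbol (the proof of the Lemma just above, using axioms \ref{item:product} and \ref{item:non-degeneracy} against an arbitrary compactly supported $\chi$) shows exactly that a bounded symbol whose quantization has norm $\mathcal{O}(\hbar)$ must vanish; applying this to $g_0 := |f_0|^2 - 1 \in \mathcal{A}_0$ gives $|f_0|^2 \equiv 1$, i.e. $f_0$ is $\mathbb{S}^1$-valued.

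One technical point to dispatch first: one must know that $f_0$ is bounded, so that Corollary \ref{cor:bounded} and axiom \ref{item:composition} apply. This is automatic, since a unitary operator has norm $1$, hence $\|U_\hbar\| = 1$ for all $\hbar$; if $f_0$ were unbounded one could, by the same localization trick used in the proof of Corollary \ref{cor:inverse} (pick $m \in M$ with $|f_0(m)|$ large, cut off with $\chi \equiv 1$ near $m$, and use axioms \ref{item:product} and \ref{item:non-degeneracy}), produce a unit vector $u_\hbar$ with $\|U_\hbar u_\hbar\|$ bounded below by roughly $|f_0(m)|$, contradicting $\|U_\hbar\| = 1$. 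Alternatively, and more cleanly, one restricts attention to the definition of semiclassical operator, which only requires $f_0 \in \mathcal{A}_0$; if $\mathcal{A}_0$ is allowed to contain unbounded functions one argues boundedness of $f_0$ as just sketched, and otherwise there is nothing to check. I would include a one-line remark handling this.

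I expect the only real subtlety to be the bookkeeping around boundedness: making sure that $f_\hbar = f_0 + \hbar f_{1,\hbar}$ is bounded so that axioms \ref{item:composition} and \ref{item:reality} genuinely give $\|U_\hbar^* U_\hbar - \mathrm{Op}_\hbar(|f_\hbar|^2)\| = \mathcal{O}(\hbar)$, and that passing from $|f_\hbar|^2$ to $|f_0|^2$ costs only $\mathcal{O}(\hbar)$. Everything else is a direct application of results already in the excerpt: Corollary \ref{cor:bounded} for the bounds, axiom \ref{item:composition} for the product, and the non-degeneracy argument from the proof of the preceding Lemma (testing against compactly supported $\chi$) to conclude $|f_0|^2 - 1 = 0$. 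No genuinely new idea is needed; this is the ``$\mathbb{S}^1$-valued'' analogue of the already-noted fact that $\mathrm{Op}_\hbar$ sends real-valued functions to self-adjoint operators, made into a two-sided statement by the exactness of unitarity as opposed to the approximateness of quasi-unitarity.
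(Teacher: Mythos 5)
Your argument is correct in outline but takes a genuinely different route from the paper's, and the route you choose creates a prerequisite (boundedness of $f_0$) that you flag but handle imprecisely. The paper's proof never invokes the global composition axiom \ref{item:composition}: it fixes an arbitrary $m \in M$, cuts off with a compactly supported $\chi$ with $\chi(m)=1$, and uses only the product axiom \ref{item:product}, in which one factor is compactly supported but the other can be any element of $\mathcal{A}_0$. This means the paper never needs to know that $f_0$ itself is bounded; the conclusion $|f_0(m)|^2 = 1$ is obtained pointwise directly, and boundedness is a by-product rather than a hypothesis. Your proof instead applies \ref{item:composition} globally to $U_\hbar^* U_\hbar$, which requires $f_0$ (hence $f_\hbar$) to be a bounded function, and $\mathcal{A}_0$ is explicitly allowed to contain unbounded functions, so this is a real gap to fill. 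Your sketch attributes the fix to ``the same localization trick used in the proof of Corollary \ref{cor:inverse},'' but that argument runs in the opposite direction: there one passes from a lower bound on $\|\mathrm{Op}_{\hbar}(f)u\|$ to a lower bound on $|f(m)|$, whereas you need to pass from a lower bound on $|f_0(m)|$ to a lower bound on $\|U_\hbar u_\hbar\|$, and this requires an extra ingredient. Concretely, after producing a unit vector $u_\hbar = \mathrm{Op}_{\hbar}(\chi)u_\hbar + \mathcal{O}(\hbar)$ localized near $m$, one must write $\mathrm{Op}_{\hbar}(\chi f_0) = f_0(m)\,\mathrm{Op}_{\hbar}(\chi) + \mathrm{Op}_{\hbar}\bigl(\chi(f_0 - f_0(m))\bigr)$ and then use Corollary \ref{cor:bounded} on the compactly supported small function $\chi(f_0-f_0(m))$ to estimate the second term by $\epsilon + \mathcal{O}(\hbar)$; only then does one obtain $\|U_\hbar u_\hbar\| \geq |f_0(m)| - \epsilon - \mathcal{O}(\hbar)$ and a contradiction. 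With that fix spelled out, your proof goes through; the rest of your argument (passing from $|f_\hbar|^2$ to $|f_0|^2$ via Corollary \ref{cor:bounded}, then concluding $|f_0|^2 - 1 = 0$ by testing against compactly supported $\chi$ as in the well-definedness lemma) is fine. The paper's approach is cleaner because it sidesteps the boundedness question altogether and derives the pointwise identity $|f_0(m)|^2 = 1$ directly from the localized computation.
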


\begin{proof}
Let $U_{\hbar}$ be a unitary semiclassical operator. Since we are only interested in the principal symbol, we can assume that $U_{\hbar} = \mathrm{Op}_{\hbar}(f)$ for some $f \in \mathcal{A}_0$. Let $m \in M$ and let $\chi \geq 0$ be a smooth compactly supported function such that $\chi(m) = 1$. By axiom \ref{item:product}, we get that
\begin{equation} \left\| \mathrm{Op}_{\hbar}(\chi^2 |f|^2) - \mathrm{Op}_{\hbar}(\chi \bar{f} \ )\mathrm{Op}_{\hbar}(\chi f) \right\| = \mathcal{O}(\hbar). \label{eq:lm_unit}\end{equation}
But, still because of axiom \ref{item:product}, we have that 
$\left\| \mathrm{Op}_{\hbar}(\chi f) - \mathrm{Op}_{\hbar}(f)\mathrm{Op}_{\hbar}(\chi) \right\| = \mathcal{O}(\hbar),$
which yields thanks to Corollary \ref{cor:bounded} applied to $\chi \bar{f}$:
\[ \left\| \mathrm{Op}_{\hbar}(\chi \bar{f} \ )\mathrm{Op}_{\hbar}(\chi f) - \mathrm{Op}_{\hbar}(\chi \bar{f} \ )\mathrm{Op}_{\hbar}(f)\mathrm{Op}_{\hbar}(\chi) \right\| = \mathcal{O}(\hbar). \]
Therefore we obtain by using (\ref{eq:lm_unit}) and the triangle inequality: 
\[ \left\| \mathrm{Op}_{\hbar}(\chi^2 |f|^2) - \mathrm{Op}_{\hbar}(\chi \bar{f} \ )\mathrm{Op}_{\hbar}(f)\mathrm{Op}_{\hbar}(\chi) \right\| = \mathcal{O}(\hbar). \]
By iterating the same method, we eventually get
\[ \left\| \mathrm{Op}_{\hbar}(\chi^2 |f|^2) - \mathrm{Op}_{\hbar}(\chi)\mathrm{Op}_{\hbar}(\bar{f} \ )\mathrm{Op}_{\hbar}(f)\mathrm{Op}_{\hbar}(\chi) \right\| = \mathcal{O}(\hbar). \]
Now, using axiom \ref{item:reality} and the fact that $\mathrm{Op}_{\hbar}(f)$ is unitary, this yields
$\left\| \mathrm{Op}_{\hbar}(\chi^2 |f|^2) - \mathrm{Op}_{\hbar}(\chi)^2 \right\| = \mathcal{O}(\hbar).$
Finally, thanks to axiom \ref{item:product} and the linearity of $\mathrm{Op}_{\hbar}$, we infer from this equality that
\[ \left\| \mathrm{Op}_{\hbar}\left(\chi^2 (|f|^2 - 1) \right) \right\| = \mathcal{O}(\hbar),\]
thus as a consequence of axiom \ref{item:non-degeneracy} we have that $\chi^2 (|f|^2 - 1) = 0$, hence $|f(m)|^2 = 1$. 
\end{proof}

Let us state a final useful remark regarding semiclassical operators. Let $T_{\hbar} \in \Psi$ be a semiclassical operator with bounded principal symbol $f$, and such that $|f| \geq c$ for some $c > 0$. Then as a consequence of Corollary \ref{cor:inverse}, $T_{\hbar}$ is invertible for $\hbar$ sufficiently small. Indeed, $\mathrm{Op}_{\hbar}(f)$ is invertible and $T_{\hbar} = \mathrm{Op}_{\hbar}(f) + \mathcal{O}(\hbar);$
thus our claim comes from an application of  \cite[Theorem A.3.30]{HislopSigal}.

\section{Proof of the main theorem} \label{sec:proofofmaintheorem}

In this section we prove our main result, which we shall reformulate here in precise terms as Theorem \ref{thm:CH_simpleF}.  
In the proof we use the results proved in \cite{PelPolVu} for the self-adjoint case.

\subsection{Cayley transform}
\label{sect:cayley}

Let us recall the definition of the inverse Cayley transform of a unitary operator \cite[Definition 3.17]{Rudin}. Let $U$ be a unitary operator such that $-1 \notin \text{Sp}(U)$. We define the inverse Cayley transform of $U$ as
$\mathcal{C}(U) = i(\mathrm{Id} - U)(\mathrm{Id} + U)^{-1}.$
Then $\mathcal{C}(U)$ is a self-adjoint operator.

{Moreover, using functional calculus, we can define the transform
\begin{equation} \mathcal{T}(U) = 2\arctan(\mathcal{C}(U)) \label{eq:new_transform}\end{equation}
of a unitary operator $U$. We introduce the function
\[ \phi: \mathbb{C} \setminus \{ -1\} \to \mathbb{C}, \quad z \mapsto i\frac{1-z}{1+z}, \]
so that $\mathcal{C}(U) = \phi(U)$. One readily checks that for $z \in \mathbb{S}^1 \setminus \{ -1\}$  
\begin{equation} \phi(z) =  \tan\left( \tfrac{1}{2} \arg z \right). \label{eq:phi_arg}\end{equation}
{This implies that if $U$ is unitary and $-1 \notin \text{Sp}(U)$, $\mathcal{C}(U)$ is bounded; indeed, since $\text{Sp}(U)$ is closed, there exists $\varepsilon > 0$ such that for every $z \in \text{Sp}(U)$, $\arg(z) \in [-\pi + \varepsilon, \pi - \varepsilon]$. Hence $\phi$ is bounded on $\text{Sp}(U)$, and the properties of functional calculus imply that $\mathcal{C}(U) = \phi(U)$ is bounded.}
Now, we consider the principal value of $\arctan$, given by the formula
\[ \arctan(z) = \frac{i}{2} \left( \Log(1-iz) - \Log(1+iz) \right) \]
where $\Log$ is the principal value of the complex logarithm. It is holomorphic in $\mathbb{C} \setminus \left( i[1,+\infty) \cup i(-\infty,-1) \right)$, and we define the function $\psi$ in a neighborhood of $\mathbb{S}^1 \setminus \{ -1\}$ as 
\begin{equation} \psi(z) = 2 \arctan(\phi(z)), \label{eq:func_psi}\end{equation}
so that $\mathcal{T}(U) = \psi(U)$ for every unitary operator $U$, and $\psi(z) = \arg(z)$ whenever $z$ belongs to $\mathbb{S}^1 \setminus \{ -1\}$}

{
\begin{lemma}
\label{lemma:comm_transform}
Let $U, V$ be commuting unitary operators acting on a Hilbert space $\Hil$, none of them having $-1$ in its spectrum. Then $\mathcal{T}(U)$ and $\mathcal{T}(V)$ commute.
\end{lemma}}

{\begin{proof}
This is a consequence of the following fact: if $A$ is a normal operator acting on a Hilbert space $\Hil$, with spectral measure $E_A$, $S$ is a Borel set and $f:\mathbb{C} \to \mathbb{C}$ is a measurable function, then 
$E_{f(A)}(S) = E_A(f^{-1}(S)).$
Therefore, if $B$ is another normal operator which commutes with $A$ and $g$ is another measurable function, the spectral projections $E_{f(A)}(S)$ and $E_{g(B)}(T)$ commute for every Borel sets $S, T$. Hence $f(A)$ and $g(B)$ commute.
\end{proof}}

{Consequently, if $U_1, \ldots, U_d$ are commuting unitary operators, it makes sense to talk about the joint spectrum of the family $\mathcal{T}(U_1), \ldots, \mathcal{T}(U_d)$.} We recall that the joint spectrum of a finite family of pairwise commuting normal operators is defined as the support of its joint spectral measure.

{\begin{lemma}
\label{lemma:jspec_transform}
Let $U_1, \ldots, U_d$ be commuting unitary operators acting on a Hilbert space $\Hil$, 
none of them having $-1$ in its spectrum. Then 
\[ \mathrm{JointSpec}(\mathcal{T}(U_1), \ldots, \mathcal{T}(U_d)) = \overline{\left\{  \arg(\lambda), \ \lambda \in \mathrm{JointSpec}(U_1, \ldots, U_d) \right\}}. \]
\end{lemma}}

{\begin{proof}
We mimic the reasoning of the proof of Proposition $5.25$ in \cite{Schmu} (which deals with the spectrum of one single operator). For every $j \in \llbracket 1, d \rrbracket$, we have that $\mathcal{T}(U_j) = \psi(U_j)$, see Equation (\ref{eq:func_psi})
Let $\mu = E_{U_1} \otimes \ldots \otimes E_{U_d}$
be the joint spectral measure of $U_1, \ldots, U_d$, and let 
$\nu = E_{\mathcal{T}(U_1)} \otimes \ldots \otimes E_{\mathcal{T}(U_d)}$
be the joint spectral measure of $\mathcal{T}(U_1), \ldots, \mathcal{T}(U_d)$; we need to prove that 
$ \mathrm{supp}(\nu) = \overline{\left\{(\psi(\lambda_1), \ldots, \psi(\lambda_d)),  \ \lambda \in \mathrm{supp}(\mu) \right\}} =: S$.
\
Firstly, let $\zeta = (\zeta_1, \ldots, \zeta_d) \in S$, and let $\varepsilon_1, \ldots, \varepsilon_d > 0$ be small enough; there exists $\lambda = (\lambda_1, \ldots, \lambda_d) \in \mathrm{supp}(\mu)$ such that for every $j \in \llbracket 1, d \rrbracket$, the inequality 
$|\zeta_j - \lambda_j| < \varepsilon_j$
holds. Since $\psi$ is continuous in a neighborhood of $\mathrm{Sp}(U_j)$ (because $\mathrm{Sp}(U_j)$ is closed and does not contain $-1$), there exists $\delta_j > 0$ such that  
\[ D(\lambda_j,\delta_j) \subset \left\{ z \in \mathbb{C}, \ |\psi(z) - \psi(\lambda_j)| < \varepsilon_j \right\} \subset \psi^{-1}\left( D(\zeta_j, 2 \varepsilon_j) \right) \]
where $D(z,r)$ stands for the open disk of radius $r$ centered at $z$. We deduce from this inclusion that
$E_{U_j}\left(\psi^{-1}\left( D(\zeta_j, 2 \varepsilon_j) \right)\right) \geq E_{U_j}\left( D(\lambda_j,\delta_j) \right) > 0$,
where the last inequality comes from the fact that $\lambda$ belongs to the support of $E_{U_j}$. Consequently, if 
$D := D(\zeta_1, 2 \varepsilon_1) \times \ldots \times D(\zeta_d, 2 \varepsilon_d),$
we have that 
\[ \nu(D) = \prod_{j=1}^d E_{\psi(U_j)}(D(\zeta_j, 2 \varepsilon_j)) = \prod_{j=1}^d E_{U_j}(\psi^{-1}\left( D(\zeta_j, 2 \varepsilon_j) \right)) > 0, \]
which means that $\zeta$ belongs to the support of $\nu$.
\
Conversely, if $\zeta \notin S$, there exists $j \in \llbracket 1, d \rrbracket$ such that $\psi^{-1}\left( D(\zeta_j, \varepsilon_j) \right) $ is empty for every $\varepsilon_j > 0$ small enough, and we conclude with similar computations that $\zeta \notin \mathrm{supp}(\nu)$.
\end{proof}}

We need the following technical tool for the proof of the main theorem.

\begin{lemma}
\label{thm:PelPolVu_modif}
Let $A_1(\hbar), \ldots, A_d(\hbar)$ be pairwise commuting self-adjoint operators acting on $\Hil_{\hbar}$, and let $T_1(\hbar), \ldots, T_d(\hbar)$ be self-adjoint semiclassical operators acting on $\Hil_{\hbar}$, with bounded principal symbols $f_1, \ldots, f_d$. Assume moreover that for all $j \in \llbracket 1,d \rrbracket$,  $\| T_j(\hbar) - A_j(\hbar) \| = \mathcal{O}(\hbar).$ Then 
\[ \mathrm{Convex \ Hull}\left( \mathrm{JointSpec}(A_1(\hbar),\ldots,A_d(\hbar)) \right) \underset{\hbar \to 0}{\longrightarrow} \overline{\mathrm{Convex \ Hull}\left( F(M) \right)} \]
where $F=(f_1, \ldots, f_d): M \to \R^d$.
\end{lemma}
Since the proof is close to the one of the aforementioned theorem, we will assume some degree of familiarity with the content of \cite{PelPolVu}. The first step is to prove the following result comparing only two operators.
\begin{lemma}
Let $A_{\hbar}, T_{\hbar}$ be self-adjoint operators acting on $\Hil_{\hbar}$ such that $T_{\hbar}$ is a semiclassical operator with principal symbol $f_0$ and $\| A_{\hbar} - T_{\hbar} \| = \mathcal{O}(\hbar)$. Let $\lambda_{\sup}(\hbar) = \sup \mathrm{Sp}(A_{\hbar})$, which may be infinite. Then
$\lambda_{\sup}(\hbar) \underset{\hbar \to 0}{\longrightarrow} \sup_M f_0.$
\end{lemma}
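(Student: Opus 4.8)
The plan is to establish the two inequalities $\limsup_{\hbar \to 0} \lambda_{\sup}(\hbar) \leq \sup_M f_0$ and $\liminf_{\hbar \to 0} \lambda_{\sup}(\hbar) \geq \sup_M f_0$ separately, following the strategy used in \cite{PelPolVu} for a single semiclassical self-adjoint operator, and then transplanting it to $A_{\hbar}$ via the perturbation bound $\| A_{\hbar} - T_{\hbar}\| = \mathcal{O}(\hbar)$. Since $A_{\hbar} = T_{\hbar} + R_{\hbar}$ with $\|R_{\hbar}\| = \mathcal{O}(\hbar)$ and $R_{\hbar}$ self-adjoint, and since the spectrum of a self-adjoint operator moves by at most $\|R_{\hbar}\|$ under such a perturbation (this follows from the Rayleigh-quotient characterization \eqref{eq:rayleigh}, which applies even to unbounded self-adjoint operators: $\lambda_{\sup}(A_{\hbar}) \leq \lambda_{\sup}(T_{\hbar}) + \|R_{\hbar}\|$ and symmetrically), it suffices to prove the statement for $T_{\hbar}$ itself, i.e. that $\sup \mathrm{Sp}(T_{\hbar}) \to \sup_M f_0$; the general case then follows by adding $\mathcal{O}(\hbar)$. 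Note $T_\hbar = \mathrm{Op}_\hbar(f_I)$ for some $f_I = (f_0 + \hbar f_{1,\hbar})_{\hbar \in I}$, and again by Corollary \ref{cor:bounded} we may as well take $T_\hbar = \mathrm{Op}_\hbar(f_0)$ up to $\mathcal{O}(\hbar)$.

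For the upper bound: if $f_0 \leq \sup_M f_0 =: S$ (assume first $S < \infty$), then $S - f_0 \geq 0$, so axiom \ref{item:garding} gives $\mathrm{Op}_{\hbar}(S - f_0) \geq -C\hbar\,\mathrm{Id}$, and by axiom \ref{item:normalization} and linearity $\mathrm{Op}_{\hbar}(f_0) \leq (S + C\hbar)\,\mathrm{Id}$, whence $\sup \mathrm{Sp}(\mathrm{Op}_{\hbar}(f_0)) \leq S + C\hbar$. If $S = \infty$ there is nothing to prove for the upper bound. For the lower bound: fix $m \in M$ and $\varepsilon > 0$; choose a compactly supported $\chi \geq 0$ with $\chi(m) = 1$, supported in a neighborhood where $f_0 > f_0(m) - \varepsilon$, so that $\chi^2(f_0 - f_0(m) + \varepsilon) \geq 0$. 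Apply axiom \ref{item:garding} to get $\mathrm{Op}_{\hbar}(\chi^2(f_0 - f_0(m) + \varepsilon)) \geq -C\hbar\,\mathrm{Id}$. Then, exactly as in the proof of Corollary \ref{cor:inverse} (and of Lemma 11 in \cite{PelPolVu}), construct a unit vector $u_{\hbar} \in \Hil_{\hbar}$ with $u_{\hbar} = \mathrm{Op}_{\hbar}(\chi) u_{\hbar} + \mathcal{O}(\hbar)$ using axiom \ref{item:non-degeneracy}; pairing the G\aa rding inequality against $u_\hbar$ and repeatedly using axiom \ref{item:product} to commute $\mathrm{Op}_\hbar(\chi)$ through (the same "iterate the method" manipulation appearing in the proof of Lemma \ref{lm:symb_unitary}), one obtains $\scal{\mathrm{Op}_{\hbar}(f_0) u_{\hbar}}{u_{\hbar}} \geq f_0(m) - \varepsilon - \mathcal{O}(\hbar)$. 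By \eqref{eq:rayleigh} this forces $\sup \mathrm{Sp}(\mathrm{Op}_{\hbar}(f_0)) \geq f_0(m) - \varepsilon - \mathcal{O}(\hbar)$; letting $\hbar \to 0$, then $\varepsilon \to 0$, then taking the supremum over $m \in M$ gives $\liminf \sup \mathrm{Sp}(T_\hbar) \geq \sup_M f_0$, covering also the case $S = \infty$.

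The main obstacle is the lower bound argument, specifically the careful localization: one must produce the quasi-eigenvector $u_\hbar$ concentrated near $m$ and control all the commutator errors when peeling off the cutoffs $\mathrm{Op}_\hbar(\chi)$, keeping every error term $\mathcal{O}(\hbar)$ uniformly. This is precisely the technical heart of Step 3 of the proof of Lemma 11 in \cite{PelPolVu}, reproduced in part in the proof of Corollary \ref{cor:inverse} above, so the work consists in adapting that bookkeeping to the present inequality rather than devising anything new; the upper bound and the passage from $T_\hbar$ to $A_\hbar$ are routine.
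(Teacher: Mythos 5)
Your reduction to $T_{\hbar}$ via the Rayleigh-quotient comparison $|\lambda_{\sup}(A_{\hbar}) - \lambda_{\sup}(T_{\hbar})| \leq \|R_{\hbar}\| = \mathcal{O}(\hbar)$, together with separate handling of the case of an infinite supremum, is precisely the paper's argument, which then concludes by citing \cite[Lemma 11]{PelPolVu} for the convergence $\sup \mathrm{Sp}(T_{\hbar}) \to \sup_M f_0$. You go further and sketch a re-derivation of that cited lemma from the quantization axioms; the sketch is sound in outline and essentially reproduces the PelPolVu argument, but it is not required once the citation is in place.
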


\begin{proof}
Let $R_{\hbar} = A_{\hbar} - T_{\hbar}$, so that $\|R_{\hbar}\| = \mathcal{O}(\hbar)$ by assumption; we choose $\hbar_0 \in I$ and $C > 0$ such that $\|R_{\hbar}\| \leq C \hbar$ for every $\hbar \leq \hbar_0$. We also introduce the (possibly infinite) quantity $\mu_{\sup}(\hbar) = \sup \mathrm{Sp}(T_{\hbar})).$
Our goal is to compare $\lambda_{\sup}(\hbar)$ to $\mu_{\sup}(\hbar)$; of course, thanks to Equation (\ref{eq:rayleigh}), we have that
\[ \lambda_{\sup}(\hbar) = \sup_{v \in \Hil_{\hbar}, \|v\|=1} \scal{A_{\hbar}v}{v}, \quad \mu_{\sup}(\hbar) = \sup_{v \in \Hil_{\hbar}, \|v\|=1} \scal{T_{\hbar}v}{v}.\]
Let $\hbar \leq \hbar_0$, and let us start with the case where $\mu_{\sup}(\hbar) = +\infty$. Let $M > 0$; there exists $v_0 \in \Hil_{\hbar}$ with unit norm such that $\scal{T_{\hbar}v_0}{v_0} \geq M$; this yields
\[ \scal{A_{\hbar}v_0}{v_0} = \scal{T_{\hbar}v_0}{v_0} + \scal{R_{\hbar}v_0}{v_0} \geq M - C \hbar.\]
Since $M$ is arbitrarily large, this means that $\lambda_{\sup}(\hbar) = +\infty$.
Now, we assume that $\mu_{\sup}(\hbar)$ is finite. From the equality
\[ \lambda_{\sup}(\hbar) = \sup_{v \in \Hil_{\hbar}, \|v\|=1} \left( \scal{T_{\hbar}v}{v} + \scal{R_{\hbar}v}{v} \right),\]
we derive that
\[ \lambda_{\sup}(\hbar) \leq \mu_{\sup}(\hbar) +  \sup_{v \in \Hil_{\hbar}, \|v\|=1} \scal{R_{\hbar}v}{v} \leq \mu_{\sup}(\hbar) + C \hbar. \]
Moreover, there exists a unit vector $v_0 \in \Hil$ such that 
$\mu_{\sup}(\hbar) \leq \scal{T_{\hbar}v_0}{v_0} + \hbar.$
By decomposing
\[ \scal{A_{\hbar}v_0}{v_0} = \scal{R_{\hbar}v_0}{v_0} + \scal{T_{\hbar}v_0}{v_0} - \mu_{\sup}(\hbar) + \mu_{\sup}(\hbar),\]
we get that 
$\lambda_{\sup}(\hbar) \geq \scal{A_{\hbar}v_0}{v_0} \geq \mu_{\sup}(\hbar) -(C+1)\hbar,$
so finally
\[ \mu_{\sup}(\hbar) -(C+1)\hbar \leq \lambda_{\sup}(\hbar) \leq  \mu_{\sup}(\hbar) + C \hbar. \]
Therefore, the result comes from the fact that $\mu_{\sup}(\hbar)$ tends to $\sup_M f_0$ as $\hbar$ goes to zero \cite[Lemma 11]{PelPolVu}.
\end{proof}

\begin{proof}[Proof of Lemma \ref{thm:PelPolVu_modif}]
We follow the reasoning of the proof of \cite[Theorem~8]{PelPolVu}. More precisely, let $\Sigma_{\hbar} =  \mathrm{JointSpec}(A_1(\hbar),\ldots,A_d(\hbar))$ and consider, for any subset $S$ of $\R^d$, the function 
\[ \Phi_S: \mathbb{S}^{d-1} \to \R \cup \{+\infty\}, \quad \alpha \mapsto \sup_{x \in S} \sum_{j=1}^d \alpha_j  x_j.\]
Then it suffices to show that $\Phi_{\Sigma_{\hbar}}$ converges uniformly to $\Phi_{F(M)}$ as $\hbar$ goes to zero. We start by proving the pointwise convergence. Let $\alpha \in \mathbb{S}^{d-1}$ and consider the self-adjoint operator $A^{(\alpha)}_{\hbar} = \sum_{j=1}^d \alpha_j A_j(\hbar)$; by \cite[Lemma~14]{PelPolVu}, $\Phi_{\Sigma_{\hbar}}(\alpha) = \sup \mathrm{Sp}(A^{(\alpha)}_{\hbar})$. In a similar fashion, we introduce the operator $T^{(\alpha)}_{\hbar} = \sum_{j=1}^d \alpha_j T_j(\hbar)$ and the function $f^{(\alpha)} =  \sum_{j=1}^d \alpha_j f_j$, so that $T^{(\alpha)}_{\hbar}$ is a self-adjoint semiclassical operator with principal symbol $f^{\alpha}$. Furthermore, since $\| T_j(\hbar) - A_j(\hbar) \| = \mathcal{O}(\hbar)$ for $j=1, \ldots, d$, we also have the estimate $\| T^{(\alpha)}(\hbar) - A^{(\alpha)}(\hbar) \| = \mathcal{O}(\hbar)$. Consequently, it follows from the previous lemma that 
\[ \Phi_{\Sigma_{\hbar}}(\alpha) = \sup \mathrm{Sp}\left(A^{(\alpha)}_{\hbar}\right) \underset{\hbar \to 0}{\longrightarrow} \sup_M f^{(\alpha)} = \Phi_{F(M)}(\alpha). \]
To prove that this convergence is uniform, we observe that the boundedness of the principal symbols $f_1, \ldots f_d$ implies the boundedness of $T_1(\hbar), \ldots T_d(\hbar)$, which in turn implies the boundedness of $A_1(\hbar), \ldots A_d(\hbar)$. Therefore the joint spectrum of the latter family is bounded, hence compact. We conclude by the argument used in the last part of the proof of Theorem 8 in \cite{PelPolVu}.
\end{proof}

\subsection{If no principal symbol is onto}
\label{subsect:very_simple}

In this section, we consider pairwise commuting unitary semiclassical operators $U_1(\hbar), \ldots, U_d(\hbar)$ with joint principal symbol $F = (f_0^1, \ldots, f_0^d)$. We assume that for every $j \in \llbracket 1,d \rrbracket$, $f_0^j(M)$ is closed, and that the same holds for $F(M)$. 
We assume moreover that none of the principal symbols $f_0^j: M \to \mathbb{S}^1$, $ j \in \llbracket 1,d \rrbracket,$
is onto; using the terminology introduced earlier, this means that $F(M)$ is a simple compact subset of $\T^d$. Note that this set is connected since it is the image of $M$, which is itself connected, by a continuous function.

Let us introduce an additional assumption in the case where the joint spectrum of $(U_1(\hbar), \ldots, U_d(\hbar))$ is generic (see Lemma \ref{lm:generic}): 

\begin{enumerate}[label=(A\arabic*)]
\item \label{item:hyp_generic} There {exist} $\hbar_0 \in I$ and a point $b \in \T^d$ which is admissible (see Lemma \ref{lm:diam} for the terminology) for all $\mathrm{JointSpec}(U_1(\hbar), \ldots, U_d(\hbar))$, $\hbar \leq \hbar_0,$
and such that $b \cdot F(M)$ is very simple.
\end{enumerate}

\begin{remark}
This assumption might seem strange but will be crucial for a part of our analysis. Indeed, it may not hold if the joint spectrum is too sparse (see Figure \ref{fig:assumption}). In this situation, given the data of the joint spectrum only, its convex hull computed thanks to our definition will be far from the convex hull of $F(M)$. However, this assumption is reasonable, because it holds for Berezin-Toeplitz and pseudodifferential operators, as a corollary of the Bohr-Sommerfeld rules which imply that the joint spectrum is ``dense'' (when $\hbar \to 0$) in the set of regular values of $F$ (see \cite{HelRob} for pseudodifferential operators and \cite{Cha_BS} for Berezin-Toeplitz operators). Nevertheless, our assumption is much weaker than the Bohr-Sommerfeld rules.
\end{remark}

\begin{figure}[h]
\subfigure[$\arg(E)$]{\includegraphics{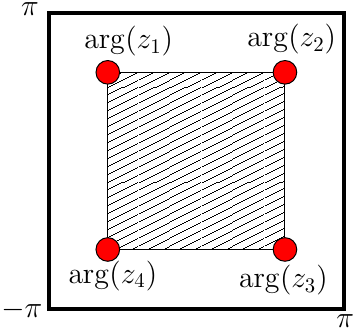}}
\hspace{3mm}
\subfigure[$\arg(b \cdot E)$]{\includegraphics{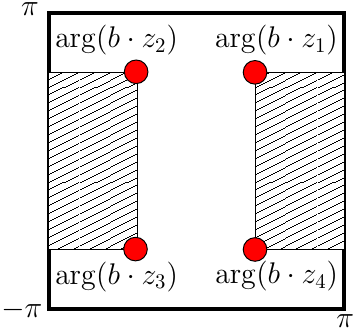}}
\caption{An example for which assumption {\rm \ref{item:hyp_generic}} does not hold.}
\label{fig:assumption}
\end{figure}

Now, we do not necessarily assume that the joint spectrum is generic anymore. Our goal is to prove the following result.

\begin{theorem}
\label{thm:CH_simpleF}
For every $b \in \T^d$ such that $b \cdot F(M)$ is very simple, 
\[ b^{-1} \cdot \exp \left(i \ \mathrm{Convex \ Hull}(\arg(b \cdot \mathrm{JointSpec}(U_1(\hbar), \ldots, U_d(\hbar)))) \right)  \]
converges, when $\hbar \to 0$, to 
$\overline{\mathrm{Convex \ Hull}_{\T^d}(F(M))}$
with respect to the Hausdorff distance on $\T^d$. In particular, if the joint spectrum is generic and assumption 
{\rm \ref{item:hyp_generic}} holds, then
\begin{equation} \mathrm{Convex \ Hull}_{\mathbb{T}^d}(\mathrm{JointSpec}(U_1(\hbar), \ldots, U_d(\hbar))) \underset{\hbar \to 0}{\longrightarrow} \overline{\mathrm{Convex \ Hull}_{\mathbb{T}^d}(F(M))}. \end{equation}
\end{theorem}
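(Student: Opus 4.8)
The plan is to reduce everything to the self\--adjoint result via the inverse Cayley transform, carefully tracking the point $-1$ and the passage to arguments. Fix $b \in \T^d$ with $b \cdot F(M)$ very simple; replacing $U_j(\hbar)$ by $b_j U_j(\hbar)$ (which is still unitary, commuting, with principal symbol $b_j f_0^j$, and does not affect the truth of the statement up to the multiplication\--invariance of $d_H^{\T^d}$ from Lemma~\ref{lm:trans_haus} and the rotation compatibility of the toric convex hull) we may assume $b = 1$, i.e. $F(M)$ is very simple, so there is a compact $K \subset (\mathbb{S}^1 \setminus \{-1\})^d$ with $F(M) \subset K$ and $\arg$ continuous on $K$. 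Since $|f_0^j + 1|$ is then bounded below by some $c_j > 0$ on $M$, Corollary~\ref{cor:inverse} and the remark after Lemma~\ref{lm:symb_unitary} give that $\mathrm{Id} + U_j(\hbar)$ is invertible with uniformly bounded inverse for $\hbar$ small; in particular $-1 \notin \mathrm{Sp}(U_j(\hbar))$ eventually, so the inverse Cayley transforms $\cC(U_j(\hbar))$ are well\--defined self\--adjoint operators, and they pairwise commute by Lemma~\ref{lemma:comm_cayley}.

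\textbf{Building the comparison self\--adjoint semiclassical operators.} Next I would produce a semiclassical self\--adjoint operator $T_j(\hbar)$ with principal symbol $g_j := \tfrac{1}{2}\arg(f_0^j)$ such that $\|\cC(U_j(\hbar)) - T_j(\hbar)\| = \mathcal{O}(\hbar)$. Write $\phi(z) = i\frac{1-z}{1+z}$, so $\phi(z) = \tfrac12 \arg z$ on $\mathbb{S}^1 \setminus \{-1\}$ and $\cC(U_j(\hbar)) = \phi(U_j(\hbar))$. The natural candidate is $T_j(\hbar) := i(\mathrm{Id} - U_j(\hbar))(\mathrm{Id} + U_j(\hbar))^{-1}$ compared with $\mathrm{Op}_\hbar(\phi \circ f_0^j)$: using $U_j(\hbar) = \mathrm{Op}_\hbar(f_0^j) + \mathcal{O}(\hbar)$, that $\tfrac{1}{f_0^j+1}$ is a bounded function lying in $\cA_0$ (it never vanishes), Remark~\ref{rmk:norm_inverse} to replace $(\mathrm{Id}+U_j(\hbar))^{-1}$ by $\mathrm{Op}_\hbar(\tfrac{1}{f_0^j+1})$ up to $\mathcal{O}(\hbar)$, and axioms~\ref{item:composition}, \ref{item:reality}, one gets $\cC(U_j(\hbar)) = \mathrm{Op}_\hbar\!\big(\tfrac{1-f_0^j}{i^{-1}(f_0^j+1)}\big) + \mathcal{O}(\hbar) = \mathrm{Op}_\hbar(\phi\circ f_0^j) + \mathcal{O}(\hbar)$; since $\phi \circ f_0^j = \tfrac12\arg(f_0^j)$ is real\--valued and bounded, $\mathrm{Op}_\hbar(\phi\circ f_0^j)$ is a self\--adjoint semiclassical operator with that principal symbol. (A small point to check: $\phi\circ f_0^j \in \cA_0$; this is where one uses that $\cA_0$ contains the relevant functions, or one works locally with cutoffs as in the proof of Lemma~\ref{lm:symb_unitary}.) Now apply Theorem~\ref{thm:PelPolVu_modif} to $A_j(\hbar) = \cC(U_j(\hbar))$ and these $T_j(\hbar)$: the Euclidean convex hulls $\mathrm{Convex\ Hull}\big(\mathrm{JointSpec}(\cC(U_1(\hbar)),\ldots,\cC(U_d(\hbar)))\big)$ converge to $\overline{\mathrm{Convex\ Hull}(G(M))}$, where $G = \tfrac12\arg\circ F = (g_1,\ldots,g_d)$, as $\hbar \to 0$.

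\textbf{Transferring back to the torus.} By Lemma~\ref{lemma:jspec_cayley}, $\mathrm{JointSpec}(\cC(U_1(\hbar)),\ldots,\cC(U_d(\hbar))) = \overline{\{\tfrac12\arg(\lambda) : \lambda \in \mathrm{JointSpec}(U_1(\hbar),\ldots,U_d(\hbar))\}}$. I would next argue that since the $f_0^j$ are not onto, the joint spectra eventually stay inside a fixed compact subset of $(\mathbb{S}^1\setminus\{-1\})^d$ (this uses that the joint spectrum is contained in a neighborhood of $F(M)$ for small $\hbar$ — a standard consequence of the axioms, e.g. via Corollary~\ref{cor:inverse} applied to $f_0^j - e^{i\theta}$, or already implicit in \cite{PelPolVu}; alternatively $\arg$ of the joint spectrum of the $U_j(\hbar)$ equals twice the joint spectrum of the self\--adjoint $\cC(U_j(\hbar))$, which is contained in the compact set where $G$ lives, up to $\mathcal{O}(\hbar)$, by Theorem~\ref{thm:PelPolVu_modif}). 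Hence for $\hbar$ small the set $2\,\mathrm{JointSpec}(\cC(U_j(\hbar))_j) = \arg(\mathrm{JointSpec}(U_j(\hbar))_j)$ lies in $(-\pi,\pi)^d$ where $z \mapsto \exp(iz)$ is bi\--Lipschitz onto its image in $(\mathbb{S}^1\setminus\{-1\})^d$, and similarly $\arg(F(M)) \subset (-\pi,\pi)^d$. On such a compact region, $\exp(i\cdot)$ and $\arg$ are bi\--Lipschitz, so they transport Hausdorff convergence: from $\mathrm{Convex\ Hull}\big(\arg(\mathrm{JointSpec})\big) \to \overline{\mathrm{Convex\ Hull}(\arg(F(M)))}$ one deduces $\exp\big(i\,\mathrm{Convex\ Hull}(\arg(\mathrm{JointSpec}(U_1(\hbar),\ldots,U_d(\hbar))))\big) \to \exp\big(i\,\overline{\mathrm{Convex\ Hull}(\arg(F(M)))}\big)$. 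By the remark after Definition~\ref{def:CH_simple} (and Lemma~\ref{lm:admissible}), since $F(M)$ is very simple and connected the right\--hand side is exactly $\overline{\mathrm{Convex\ Hull}_{\T^d}(F(M))}$ (with $b = 1$). Undoing the reduction $b = 1$ by multiplying back by $b^{-1}$ and invoking rotation compatibility of $\mathrm{Convex\ Hull}_{\T^d}$ and Lemma~\ref{lm:trans_haus} gives the first, ``for every $b$'' assertion. For the second assertion: when the joint spectrum is generic and \ref{item:hyp_generic} holds, one may take the \emph{same} admissible $b$ for all small $\hbar$ with $b\cdot F(M)$ very simple, so $\mathrm{Convex\ Hull}_{\T^d}(\mathrm{JointSpec}(U_1(\hbar),\ldots,U_d(\hbar)))$ is literally computed by the displayed formula with that $b$ (Definition~\ref{def:CH_simple}), and the convergence just proved applies verbatim.

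\textbf{Main obstacle.} The delicate point is not the Cayley computation but controlling the position of the joint spectrum near $-1$: one must guarantee that, uniformly for small $\hbar$, the joint spectrum of $(U_1(\hbar),\ldots,U_d(\hbar))$ avoids a fixed neighborhood of the "bad" locus $\{z_j = -1\}$, so that $\arg$ is continuous and bi\--Lipschitz there and commutes with taking convex hulls. This is exactly the analogue, at the spectral level, of the geometric subtlety at $-1$ flagged in the introduction; it is handled by combining Corollary~\ref{cor:inverse} (invertibility of $\mathrm{Op}_\hbar(f_0^j - \zeta)$ for $\zeta$ near $-1$ forces $\zeta \notin \mathrm{Sp}$) with the uniform bound on $\|\cC(U_j(\hbar))\|$ coming from boundedness of the principal symbol $g_j$, which keeps $\mathrm{Sp}(U_j(\hbar))$ eventually inside a compact arc of $\mathbb{S}^1\setminus\{-1\}$. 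Everything else is a bi\--Lipschitz change of coordinates plus bookkeeping with Lemmas~\ref{lm:trans_haus}, \ref{lm:admissible} and the rotation compatibility of the toric convex hull.
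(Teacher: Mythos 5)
Your proposal is correct and follows essentially the same route as the paper's own proof: reduce by a rotation $b$ so that $F(M)$ becomes very simple, use Corollary~\ref{cor:inverse} (the paper packages this as Lemma~\ref{lm:jsp_simple}) to get $-1$ out of the spectrum, pass to the inverse Cayley transforms and compare them to $\mathrm{Op}_\hbar(\phi\circ f_0^j)$ up to $\mathcal{O}(\hbar)$ (the paper's Lemma~\ref{lm:diff_cayley_op}), feed these into Theorem~\ref{thm:PelPolVu_modif}, transfer the joint spectrum back via Lemma~\ref{lemma:jspec_cayley}, and finally push the Euclidean Hausdorff convergence through $\exp(i\,\cdot)$ using the fact that everything stays in a compact part of $(-\pi,\pi)^d$ (the paper isolates this as Lemma~\ref{lm:cv}); the admissibility assumption~\ref{item:hyp_generic} then gives the second statement exactly as you indicate. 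The only cosmetic difference is that the paper works with $2\,\mathcal{C}(V_j(\hbar))$ so that the principal symbol is $\arg(b_jf_0^j)$ rather than $\tfrac12\arg(b_jf_0^j)$, saving a harmless rescaling at the end.
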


In this statement, we use that $b \cdot \mathrm{JointSpec}(U_1(\hbar), \ldots, U_d(\hbar)))$ is very simple, for $\hbar \in I$ small enough, whenever $b \cdot F(M)$ is very simple. This is a consequence of the following lemma.
\begin{lemma}
\label{lm:jsp_simple}
Let $j$ in $\llbracket 1,d \rrbracket$, and let $a \in \mathbb{S}^1 \setminus f_0^j(M)$. Then there exists $\hbar_0 \in I$ such that for every $\hbar \leq \hbar_0$ in $I$, $a \notin \mathrm{Sp}(U_j(\hbar))$.
\end{lemma}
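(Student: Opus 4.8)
The plan is to use Corollary \ref{cor:inverse} applied to a suitable bounded function, together with the fact that the spectrum of $U_j(\hbar)$ lives on $\mathbb{S}^1$. First I would reduce to a statement about invertibility of $\lambda\,\mathrm{Id} - U_j(\hbar)$ for $\lambda$ near $a$, but it is cleaner to argue directly. Since $f_0^j(M)$ is closed (this is one of the standing hypotheses of Section \ref{subsect:very_simple}, which is where this lemma is invoked) and $a \notin f_0^j(M)$, there is some $c > 0$ such that the function $g := a - f_0^j$ satisfies $|g| \geq c$ on $M$; indeed $|g(m)| = |a - f_0^j(m)| \geq \mathrm{dist}(a, f_0^j(M)) > 0$ by closedness. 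Note that $g \in \mathcal{A}_0$ since $\mathcal{A}_0$ contains the constants and is stable under the operations used, and $g$ is bounded because $f_0^j$ is $\mathbb{S}^1$-valued; moreover $a\,\mathrm{Id} - U_j(\hbar) = \mathrm{Op}_{\hbar}(g) + \mathcal{O}(\hbar)$ by definition of the principal symbol (using axiom \ref{item:normalization} to write $a\,\mathrm{Id} = \mathrm{Op}_{\hbar}(a)$), where here I use that $U_j(\hbar)$ is a semiclassical operator with principal symbol $f_0^j$.

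The key step is then: by the second-to-first implication in Corollary \ref{cor:inverse}, $|g| \geq c$ implies that there exists $\hbar_1 \in I$ such that $\mathrm{Op}_{\hbar}(g)$ is invertible with uniformly bounded inverse for all $\hbar \leq \hbar_1$. Since $a\,\mathrm{Id} - U_j(\hbar) = \mathrm{Op}_{\hbar}(g) + R_{\hbar}$ with $\|R_{\hbar}\| = \mathcal{O}(\hbar)$, a standard perturbation argument (e.g.\ \cite[Theorem A.3.30]{HislopSigal}, exactly as used in the remark following Corollary \ref{cor:inverse} and at the end of Section \ref{sect:semiclass}) shows that $a\,\mathrm{Id} - U_j(\hbar)$ is invertible for all $\hbar \leq \hbar_0$ for some $\hbar_0 \leq \hbar_1$ in $I$. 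By definition of the spectrum, this means precisely that $a \notin \mathrm{Sp}(U_j(\hbar))$ for all such $\hbar$, which is the claim.

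I do not expect a genuine obstacle here; the only point requiring a little care is checking that $g = a - f_0^j$ genuinely lies in $\mathcal{A}_0$ and that the $\mathcal{O}(\hbar)$ bookkeeping linking $a\,\mathrm{Id} - U_j(\hbar)$ to $\mathrm{Op}_{\hbar}(g)$ is legitimate (it is, since $U_j(\hbar) = \mathrm{Op}_{\hbar}((f_0^j)_{\hbar}) = \mathrm{Op}_{\hbar}(f_0^j) + \mathcal{O}(\hbar)$ by Corollary \ref{cor:bounded} applied to the $\hbar f_{1,\hbar}$ term in the semiclassical expansion). The mild subtlety is that the conclusion of Corollary \ref{cor:inverse} is an equivalence about $\mathrm{Op}_{\hbar}(g)$, not about $U_j(\hbar)$ itself, so the perturbative step is genuinely needed rather than being a direct citation.
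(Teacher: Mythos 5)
Your proof is correct and follows essentially the same route as the paper: establish $|f_0^j - a| \geq c$ from closedness, then invoke Corollary~\ref{cor:inverse} plus the $\mathcal{O}(\hbar)$ perturbation argument. The paper simply cites this last step as the remark stated immediately after Lemma~\ref{lm:symb_unitary} (that a semiclassical operator with bounded principal symbol bounded away from zero is invertible for small $\hbar$), whereas you re-derive it inline; there is no substantive difference.
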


\begin{proof}
This is a consequence of Corollary \ref{cor:inverse} (more precisely, of its consequence stated right after the proof of Lemma \ref{lm:symb_unitary}). Indeed, since $f_0^j(M)$ is closed, there exists a small open neighborhood of $a$ in $\mathbb{S}^1$ not intersecting it. Thus there exists $c > 0$ such that 
$|f_0^j - a| \geq c.$
Hence the operator $U_j(\hbar) - a \ \mathrm{Id}$ is invertible, so $a$ does not belong to the spectrum of $U_j(\hbar)$.
\end{proof}

\begin{lemma}
\label{lm:diff_cayley_op}
Let $(U_{\hbar})_{\hbar \in I}$ be a unitary semiclassical operator with principal symbol $f_0: M \to \mathbb{S}^1$ such that $f_0(M)$ is closed and does not contain $-1$. Then 
$\left\| \mathcal{C}(U_{\hbar}) - \mathrm{Op}_{\hbar}(\phi \circ f_0) \right\| = \mathcal{O}(\hbar),$
where the function $\phi: \mathbb{C} \setminus \{-1\} \to \mathbb{C}$ is defined as $\phi(z) = i\frac{1-z}{1+z}$.
\end{lemma}

Note that this statement makes sense since by the above lemma, there exists $\hbar_0 \in I$ such that for every $\hbar \leq \hbar_0$, $-1 \notin \mathrm{Sp}(U_{\hbar})$. {Moreover, $\phi \circ f = i\frac{1-f}{1+f}$ belongs to $\mathcal{A}_0$ by the properties of this algebra, hence it makes sense to introduce $\mathrm{Op}_{\hbar}(\phi \circ f_0)$.}

\begin{proof}
By the same argument that we have used in the proof of the previous lemma, $\mathrm{Op}_{\hbar}(1 + f_0)$ is invertible and the norm of its inverse is uniformly bounded in $\hbar$. Thus by axiom \ref{item:composition} and Remark \ref{rmk:norm_inverse}, we have that
\[ \left\| \mathrm{Op}_{\hbar}(\phi \circ f_0) - i \ \mathrm{Op}_{\hbar}(1-f_0) \mathrm{Op}_{\hbar}(1+f_0)^{-1} \right\| = \mathcal{O}(\hbar), \]
which yields by axiom \ref{item:normalization}:
\begin{equation}\label{eq:diff} \left\| \mathrm{Op}_{\hbar}(\phi \circ f_0) - i \ (\mathrm{Id} - \mathrm{Op}_{\hbar}(f_0)) (\mathrm{Id} + \mathrm{Op}_{\hbar}(f_0))^{-1} \right\| = \mathcal{O}(\hbar). \end{equation}
Furhermore, $\mathrm{Id} + U_{\hbar} = \mathrm{Id} + \mathrm{Op}_{\hbar}(f_0) + R_{\hbar}$ with $\| R_{\hbar} \| = \mathcal{O}(\hbar)$. Consequently (see e.g. \cite[Theorem A3.31]{HislopSigal}),
$\left(\mathrm{Id} + U_{\hbar}\right)^{-1} = \left( \mathrm{Id} + \mathrm{Op}_{\hbar}(f_0) \right)^{-1}  \left( \mathrm{Id} + A_{\hbar} \right)^{-1}$
where
$A_{\hbar} = R_{\hbar}  \left( \mathrm{Id} + \mathrm{Op}_{\hbar}(f_0) \right)^{-1};$
observe that $\|A_{\hbar}\| = \mathcal{O}(\hbar)$. We derive from the above equation the inequality
\[ \left\| \left(\mathrm{Id} + U_{\hbar}\right)^{-1} -  \left( \mathrm{Id} + \mathrm{Op}_{\hbar}(f_0) \right)^{-1} \right\| \leq \left\| \left( \mathrm{Id} + \mathrm{Op}_{\hbar}(f_0) \right)^{-1} \right\|  \left\| \left( \mathrm{Id} + A_{\hbar} \right)^{-1} - \mathrm{Id} \right\|. \]
But we have that 
\[  \left\| \left( \mathrm{Id} + A_{\hbar} \right)^{-1} - \mathrm{Id} \right\| \leq \sum_{n=1}^{+\infty} \| A_{\hbar} \|^n = \frac{1}{1-\|A_{\hbar}\|} -1 = \mathcal{O}(\hbar). \]
Therefore we finally obtain that 
$\| \left(\mathrm{Id} + U_{\hbar}\right)^{-1} -  \left( \mathrm{Id} + \mathrm{Op}_{\hbar}(f_0) \right)^{-1} \| = \mathcal{O}(\hbar).$
Since obviously
$\| (\mathrm{Id} - U_{\hbar}) -  ( \mathrm{Id} - \mathrm{Op}_{\hbar}(f_0)) \| = \mathcal{O}(\hbar),$
Equation (\ref{eq:diff}) and the triangle inequality finally yield
\[ \left\| \mathrm{Op}_{\hbar}(\phi \circ f_0) - i \ (\mathrm{Id} - U_{\hbar}) (\mathrm{Id} + U_{\hbar})^{-1} \right\| = \mathcal{O}(\hbar),\]
which was to be proved. 
\end{proof}

{\begin{lemma}
\label{lm:diff_transform_op}
Let $(U_{\hbar})_{\hbar \in I}$ be a unitary semiclassical operator with principal symbol $f_0: M \to \mathbb{S}^1$ such that $f_0(M)$ is closed and does not contain $-1$. Then, if $\psi$ is as in Equation (\ref{eq:func_psi}),
\[ \left\| \mathcal{T}(U_{\hbar}) - \mathrm{Op}_{\hbar}(\psi \circ f_0) \right\| = \mathcal{O}(\hbar). \]
\end{lemma}}

{Note that it makes sense to talk about the operator $\mathrm{Op}_{\hbar}(\psi \circ f_0) =  \text{Op}_{\hbar}(2 \arctan \circ \phi \circ f_0)$ since $\arctan \in \classe{\infty}{(\R,\R)}$ is bounded and $\phi \circ f_0$ is real-valued (because of Equation (\ref{eq:phi_arg})) and bounded (because, as obtained in the proof of Lemma \ref{lm:jsp_simple}, $(\arg \circ f_0)(M)$ is contained in an interval of the form $[-\pi+\varepsilon,\pi-\varepsilon]$, with $\varepsilon > 0$).}

{\begin{proof}
Let $A_{\hbar} = \mathcal{C}(U_{\hbar})$. The proof of Lemma \ref{lm:jsp_simple} can be adapted to show that there exists $c > 0$ such that $\text{Sp}(A_{\hbar}) \subset [-c,c]$ for every $\hbar$ sufficiently small. Since $A_{\hbar}$ is bounded and $G = 2 \arctan$ is holomorphic in a neighborhood of $[-c,c]$, we can use holomorphic functional calculus and write
\[ \psi(U_{\hbar}) = G(A_{\hbar}) = \frac{1}{2i\pi} \int_{\Gamma} G(\zeta) \left( \zeta - A_{\hbar} \right)^{-1} \ d\zeta, \]
where $\Gamma$ is a positively oriented contour containing $[-c,c]$ in its interior (see for instance \cite[Section VII.9]{DunSch}). By the previous lemma, we know that $A_{\hbar} = \text{Op}_{\hbar}(\phi \circ f_0) + R_{\hbar}$ with $\| R_{\hbar} \| = \mathcal{O}(\hbar)$. Hence
\[ \begin{split}  \left( \zeta - A_{\hbar} \right)^{-1} &  =  \left( \zeta - \text{Op}_{\hbar}(\phi \circ f_0) - R_{\hbar} \right)^{-1} \\
& =  \left(\text{Id} -  \underbrace{(\zeta - \text{Op}_{\hbar}(\phi \circ f_0))^{-1} R_{\hbar}}_{B_{\hbar}} \right)^{-1}  \left( \zeta - \text{Op}_{\hbar}(\phi \circ f_0) \right)^{-1}. \end{split} \]
We claim that $\left(\text{Id} - B_{\hbar} \right)^{-1} = \text{Id} + \mathcal{O}(\hbar)$ uniformly in $\zeta$; indeed, we have that 
\[ \| B_{\hbar} \| \leq \left\| (\zeta - \text{Op}_{\hbar}(\phi \circ f_0))^{-1} \right\| \ \| R_{\hbar} \| \leq \frac{\| R_{\hbar} \|}{d(\zeta,\text{Sp}(\text{Op}_{\hbar}(\phi \circ f_0)))}, \]
see for instance \cite[Theorem 5.8]{HislopSigal} for the last equality. Since the distance $d(\zeta,\text{Sp}(\text{Op}_{\hbar}(\phi \circ f_0)))$ is bounded from below uniformly in $\zeta$, this yields $\| B_{\hbar} \| = \mathcal{O}(\hbar)$ uniformly in $\zeta$, and we obtain as in the proof of the previous lemma that
\[  \left\| \left( \mathrm{Id} + B_{\hbar} \right)^{-1} - \mathrm{Id} \right\| \leq \frac{1}{1-\|B_{\hbar}\|} -1 = \mathcal{O}(\hbar) \]
uniformly in $\zeta$. Consequently, since $G$ is bounded on $[-c,c]$,
\[ G(A_{\hbar}) = \frac{1}{2i\pi} \int_{\Gamma} G(\zeta) \left( \zeta - \text{Op}_{\hbar}(\phi \circ f_0) \right)^{-1} \ d\zeta + \mathcal{O}(\hbar) = G\left(\text{Op}_{\hbar}(\phi \circ f_0)\right) + \mathcal{O}(\hbar). \]
Now, it follows from Axiom \ref{item:func_calc} that
\[ G\left(\text{Op}_{\hbar}(\phi \circ f_0)\right) = \text{Op}_{\hbar}(G \circ \phi \circ f_0) + \mathcal{O}(\hbar) = \text{Op}_{\hbar}(\psi \circ f_0)+ \mathcal{O}(\hbar); \] 
indeed, the function $\arctan: \R \to \R$ is bounded and $\phi \circ f_0$ is real-valued and bounded. Consequently, we obtain that 
\[ G(A_{\hbar}) = \text{Op}_{\hbar}(\psi \circ f_0) + \mathcal{O}(\hbar). \]
\end{proof}}

Before proving Theorem \ref{thm:CH_simpleF}, we state one last technical lemma.

\begin{lemma}
\label{lm:cv}
Let $E$ be a compact subset of $(-\pi,\pi)^d$ and let $(E_{\varepsilon})_{\varepsilon > 0}$ be a family of compact subsets of $(-\pi,\pi)^d$ such that 
$d_H\left(E, E_{\varepsilon} \right) \underset{\varepsilon \to 0}{\longrightarrow} 0.$
Then $d_H^{\T^d}\left(\exp(iE), \exp(i E_{\varepsilon}) \right) \underset{\varepsilon \to 0}{\longrightarrow} 0$.
\end{lemma}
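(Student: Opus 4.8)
The plan is to observe that the exponential map $\theta \mapsto \exp(i\theta)$ is non-expanding from $(\R^d, \|\cdot\|_{\R^d})$ to $(\T^d, d^{\T^d})$, and to combine this with the elementary fact that a non-expanding map never increases the Hausdorff distance. The assumption that $E$ and the $E_\varepsilon$ lie inside $(-\pi,\pi)^d$ makes the statement transparent, but it is in fact not needed for the inequality in the direction we want.

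First I would record the pointwise estimate
\[ d^{\T^d}(\exp(i\theta), \exp(i\theta')) \leq \|\theta - \theta'\|_{\R^d} \qquad \textrm{for all } \theta, \theta' \in \R^d. \]
Indeed, write $\arg(\exp(i\theta)) = \theta - 2\pi m$ and $\arg(\exp(i\theta')) = \theta' - 2\pi m'$ with $m, m' \in \Z^d$ (each $m_j$ being the unique integer for which $\theta_j - 2\pi m_j \in (-\pi,\pi]$). Then $\arg(\exp(i\theta)) - \arg(\exp(i\theta')) + 2\pi(m-m') = \theta - \theta'$, so the bound follows from the definition $d^{\T^d}(z,w) = \min_{k \in (2\pi\Z)^d} \|\arg(z) - \arg(w) + k\|_{\R^d}$ by choosing $k = 2\pi(m - m')$.

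Next I would transfer this to Hausdorff distances. Fix $\eta > 0$ and set $r = d_H(E, E_\varepsilon)$; by definition of the Hausdorff distance, $E \subseteq (E_\varepsilon)_{r+\eta}$ and $E_\varepsilon \subseteq E_{r+\eta}$. Given $a = \exp(i\theta) \in \exp(iE)$ with $\theta \in E$, choose $\theta' \in E_\varepsilon$ with $\|\theta - \theta'\|_{\R^d} \leq r + \eta$; then $\exp(i\theta') \in \exp(iE_\varepsilon)$, and the pointwise estimate gives $d^{\T^d}(a, \exp(iE_\varepsilon)) \leq r + \eta$. Symmetrically, $d^{\T^d}(b, \exp(iE)) \leq r + \eta$ for every $b \in \exp(iE_\varepsilon)$. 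By the characterization of the Hausdorff distance recalled in (\ref{eq:char_haus}), this yields $d_H^{\T^d}(\exp(iE), \exp(iE_\varepsilon)) \leq r + \eta$, and letting $\eta \to 0$ we obtain
\[ d_H^{\T^d}(\exp(iE), \exp(iE_\varepsilon)) \leq d_H(E, E_\varepsilon), \]
whose right-hand side tends to $0$ as $\varepsilon \to 0$ by hypothesis. There is essentially no obstacle in this argument; the only point worth a line of care is the use of the $(r+\eta)$-thickening rather than the $r$-thickening, so as not to assume that the infimum defining $d_H$ is attained.
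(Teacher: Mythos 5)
Your proof is correct. It takes a slightly different and genuinely simpler route than the paper's. The paper fixes $\delta_0 = d(E, \partial[-\pi,\pi]^d)$, restricts to $\varepsilon$ with $d_H(E,E_\varepsilon)\leq \delta \leq \delta_0/2$, and then shows that for nearby $u \in E$, $v \in E_\varepsilon$ the identity $d^{\T^d}(\exp(iu),\exp(iv)) = \|u-v\|_{\R^d}$ holds, by checking that $\|u - v + \theta\| \geq \delta_0$ for every nonzero $\theta \in (2\pi\Z)^d$ (since $v - \theta$ lands outside $[-\pi,\pi]^d$ when $v \in (-\pi,\pi)^d$). You bypass all of this by noting the one-sided bound $d^{\T^d}(\exp(i\theta),\exp(i\theta'))\leq \|\theta - \theta'\|_{\R^d}$, which is a universal Lipschitz estimate for $\exp: (\R^d,\|\cdot\|) \to (\T^d, d^{\T^d})$, valid with no hypothesis on $\theta, \theta'$; the choice $k = 2\pi(m - m')$ in the $\min$ makes this transparent. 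Combined with the standard fact that a non-expanding map does not increase Hausdorff distance, this gives the clean inequality $d_H^{\T^d}(\exp(iE),\exp(iE_\varepsilon)) \leq d_H(E,E_\varepsilon)$ and in particular shows that the hypothesis that $E$ and $E_\varepsilon$ sit inside $(-\pi,\pi)^d$ is not actually needed for the lemma's conclusion (it is of course needed elsewhere in the paper, where $\arg$ is applied and one must stay away from the branch cut). The gain of the paper's more involved argument is that it produces equality rather than $\leq$ at the pointwise level, but that extra information is not used, so your streamlined version is preferable as a proof of this particular lemma.
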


\begin{proof}
Let $\delta_0 = d(E,\partial([-\pi,\pi]^d))$
be the distance between $E$ and the boundary of $[-\pi,\pi]^d$ in $\R^d$. Choose a positive number $\delta \leq \tfrac{1}{2} \delta_0$; there exists $\varepsilon > 0$ such that $d_H(E,E_{\varepsilon}) \leq \delta$. Let $\gamma$ be such that $1 < \gamma < 2$. Let $u \in E$; by definition of the Hausdorff distance, there exists $v \in E_{\varepsilon}$ such that 
$\| u - v \|_{\R^d} \leq \gamma d_H(E,E_{\varepsilon}).$
Now, let $\theta \in (2\pi\Z)^d$ be non-zero; then $v - \theta$ does not belong to $[-\pi,\pi]^d$, thus 
$\| u - v + \theta \|_{\R^d} \geq \delta_0 \geq \gamma \delta \geq \| u - v \|_{\R^d}.$
Consequently, we have that
\[ d^{\T^d}\left(\exp(iu), \exp(iv) \right) = \| u - v \|_{\R^d} \leq  \gamma d_H(E,E_{\varepsilon}) \leq \gamma \delta.\]
Therefore $d^{\T^d}\left(\exp(iu), \exp(i E_{\varepsilon}) \right) \leq  \gamma \delta.$ Exchanging the roles of $E$ and $E_{\varepsilon}$, we also get that for every $v$ in $E_{\varepsilon}$, 
$d^{\T^d}\left(\exp(iv),  \exp(i E) \right) \leq  \gamma \delta.$
This implies that $d_H^{\T^d}\left(\exp(iE), \exp(i E_{\varepsilon}) \right) \leq \gamma \delta$, because of 
the characterization (\ref{eq:char_haus}). 
\end{proof}

We are finally ready to give proof of the main result of this section.

{\begin{proof}[Proof of Theorem \ref{thm:CH_simpleF}]
Let $b = (b_1, \ldots, b_d) \in \T^d$ be such that $b \cdot F(M)$ is very simple. For every $j \in \llbracket 1, d \rrbracket$, we consider the operator 
$V_j(\hbar) = b_j U_j(\hbar),$
which is a semiclassical unitary operator, with principal symbol 
$g_0^j = b_j f_0^j.$
By Lemma \ref{lm:jsp_simple}, there exists $\hbar_j \in I$ such that $-1 \notin \mathrm{Sp}(V_j(\hbar))$ whenever $\hbar \leq \hbar_j$. Let 
$\hbar_0 = \min_{1 \leq j \leq d} \hbar_j;$
in the rest of the proof we will assume that $\hbar \leq \hbar_0$. We can therefore consider the self-adjoint operators
\[ T_j(\hbar) = \mathcal{T}(V_j(\hbar)), \quad 1 \leq j \leq d, \]
see Equation (\ref{eq:new_transform}) for the definition of $\mathcal{T}$. By Lemma \ref{lemma:comm_transform}, $T_j(\hbar)$ and $T_m(\hbar)$ commute for every $j,m \in \llbracket 1,d \rrbracket$. We also consider the self-adjoint semiclassical operators
$B_j(\hbar) = \mathrm{Op}_{\hbar}(a_0^j)$, $1 \leq j \leq d,$
where $a_0^j = \psi \circ g_0^j$, see Equation (\ref{eq:func_psi}) for the definition of $\psi$.
We also recall that for $z \in \mathbb{S}^1 \setminus \{-1\}$, $\psi(z) = \arg z$, and thus 
$a_0^j = \arg g_0^j = \arg (b_j f_0^j).$
Let $A = (a_0^1,\ldots,a_0^d)$. Since by Lemma \ref{lm:diff_transform_op},
$\| T_j(\hbar) - B_j(\hbar) \| = \mathcal{O}(\hbar)$, for every $j \in \llbracket 1,d \rrbracket$, Lemma~\ref{thm:PelPolVu_modif} implies that
\begin{equation} \mathrm{Convex \ Hull}(\mathrm{JointSpec}(T_1(\hbar), \ldots, T_d(\hbar))) \underset{\hbar \to 0}{\longrightarrow} \overline{\mathrm{Convex \ Hull}(A(M))} \label{eq:CH} \end{equation}
with respect to the Hausdorff distance on $\R^d$. On the one hand, we have the equality $\mathrm{Convex \ Hull}(A(M)) = \mathrm{Convex \ Hull}(\arg(b \cdot F(M))).$ 
 On the other hand, Lemma \ref{lemma:jspec_transform} yields 
\[ \mathrm{JointSpec}(T_1(\hbar), \ldots, T_d(\hbar))) = \overline{\arg(\mathrm{JointSpec}(V_1(\hbar), \ldots, V_d(\hbar)))}. \]
Substituting these results in equation (\ref{eq:CH}), we obtain that
\[ \mathrm{Convex \ Hull}(b \cdot \arg(\mathrm{JointSpec}(U_1(\hbar), \ldots, U_d(\hbar)))) \]
converges, when $\hbar$ goes to zero, to
$\overline{\mathrm{Convex \ Hull}(b \cdot \arg(F(M)))}$
with respect to the Hausdorff distance on $\R^d$. By Lemma \ref{lm:cv}, this in turn implies that 
$$\exp\left(i \ \mathrm{Convex \ Hull}(b \cdot \arg(\mathrm{JointSpec}(U_1(\hbar), \ldots, U_d(\hbar))))\right)$$
converges to
$\exp\left(i \ \overline{\mathrm{Convex \ Hull}(b \cdot \arg(F(M)))} \right)$
for the Hausdorff distance on $\T^d$ when $\hbar$ goes to zero. Using the continuity of $\exp$ and of the restriction of $\arg$ to $(-\pi, \pi)^d$, we see that the latter is
$\overline{ \exp\left(i \ \mathrm{Convex \ Hull}(b \cdot \arg(F(M))) \right)}.$
Finally, using that $z \in \T^d \mapsto b^{-1} \cdot z$ is continuous and preserves the Hausdorff distance (Lemma \ref{lm:trans_haus}), this yields the first part of the Theorem.
\ \\
For the second statement of the Theorem, we apply the first part with a point $b = (b_1, \ldots, b_d) \in \T^d$ which is admissible for all the joint spectra $\mathrm{JointSpec}(U_1(\hbar), \ldots, U_d(\hbar))$, $\hbar \leq \hbar_0$, and such that the set $b \cdot F(M)$ is very simple, keeping in mind Definition \ref{def:CH_simple}.
\end{proof}}

\section{Application to symplectic geometry}
\label{sect:circle_BTO}

Symplectic actions that are not Hamiltonian have recently become of important relevance in view of the work of Susan Tolman~\cite{MR3735631} (which constructs many such actions with isolated fixed points on compact manifolds) and recent works studying when a symplectic action is Hamiltonian, and the closely related problem of estimating the number of fixed points of a symplectic non\--Hamiltonian action,  see for example \cite{LinPel,TolWei} and references therein. For an action which is symplectic but not Hamiltonian, there is no momentum map in the usual sense, but one can construct a circle-valued function playing the same part.

\subsection{Construction of the circle valued momentum map}

We identify  $\mathbb{S}^1$ with $\mathbb{R}/\mathbb{Z}$ and denote by $\pi:
\mathbb{R}\ni t\mapsto[t] \in \mathbb{R}/\mathbb{Z}$ the projection. The length form $\lambda \in
\Omega^1(\mathbb{R}/\mathbb{Z})$ is given by $\lambda([t])\left(T_t \pi(r)\right):=r$.  
Let $(M,\omega)$ be a connected symplectic manifold, that is, 
$M$ is a smooth manifold and $\omega$  is a smooth $2$\--form on $M$ which is  
non-degenerate and closed.  Let $\Phi: (\mathbb{R}/\mathbb{Z}) \times M 
\rightarrow M$ be a smooth symplectic action, that is a
smooth action by diffeomorphisms 
$\Phi_{[t]}: M \rightarrow M$ that preserves the 
symplectic form $\omega$ (these are called \emph{symplectomorphisms}). 
For $r \in \mathbb{R}$ denote by
$r_M \in \mathfrak{X}(M)$ the action infinitesimal generator 
given by $r_M(x): = \left.\frac{d}{d\varepsilon}\right|_{\varepsilon=0}\Phi_{[r\varepsilon]}(x).$

\begin{definition}
The  $\mathbb{R}/\mathbb{Z}$\--action on $(M,\omega)$  is 
\emph{Hamiltonian} if there is a smooth 
map $\mu \colon M \to \mathbb{R}$
such that $\mathbf{i}_{1_M}\omega:= 
\omega(1_M, \cdot) = {\rm d} \mu.$
The map $\mu$ is called the \emph{momentum map} of the action.
\end{definition}

Note that the existence of $\mu$ is equivalent to 
the one\--form 
$\mathbf{i}_{1_M}\omega$ being exact, and therefore
if the first cohomology group $H^1(M; \mathbb{R})$ vanishes
then every symplectic 
$\mathbb{R}/\mathbb{Z}$-action on $M$ is in fact Hamiltonian. 

If the $\mathbb{R}/\mathbb{Z}$\--action does not have a momentum map in the
sense above, then the action must be non trivial. Hence, if the action is not Hamiltonian,
then $\mathbf{i}_{1_M}\omega$ is not exact. These type of actions also admit
an analogue of the momentum map, called the \emph{circle valued momentum
map}, and which now takes values in $\mathbb{R}/\mathbb{Z}$. 
A \emph{circle valued momentum map} $\mu:M \rightarrow \mathbb{R}/\mathbb{Z}$ is
determined by  the equation 
$\mu^\ast \lambda = \mathbf{i}_{1_M} \omega.$
 
Such a map $\mu$ always exists, for either $\omega$ itself, or a very close perturbation
of it. To be more precise, suppose that $\mathbb{R}/\mathbb{Z}$ acts 
symplectically on the closed symplectic manifold $(M,\omega)$, but not Hamiltonianly. 
Whenever the symplectic form $\omega$ is integral (that is, 
$[\omega] \in H^2(M; \mathbb{Z})$), then the action admits a circle valued momentum map 
$\mu: M \rightarrow \mathbb{R}/\mathbb{Z}$ for $\omega$ (this result is due to McDuff, see \cite{McDuff},
and is valid for some symplectic form even when the integral cohomology assumption is invalid).

For the sake of completeness and because it is a very simple construction, we review it here.
It follows from \cite[Lemma 7]{PelRat} that 
$\left[\mathbf{i}_{1_M} \omega \right] \in H^1(M; \mathbb{Z}).$
Fix $m_0 \in M$ and let $\gamma_m$ be an arbitrary smooth path in $M$, from $m_0$ to $m$, and define $\mu:M \rightarrow \mathbb{R}/\mathbb{Z}$ by 
\begin{eqnarray} \label{mom} \mu(m):= \left[ \int_{\gamma _m}\mathbf{i}_{1_M} \omega\right].\end{eqnarray}
It is immediate that the definition of $\mu$ is independent of paths, so it is is well defined. Also, $\mu$ is clearly smooth, and for every $v _m\in T_m M$, we have 
$T_m\mu(v_m) = T_{\int_{\gamma _m} \mathbf{i}_{1_M} \omega} \pi \big(\mathbf{i}_{1_M} \omega(m)(v_m)  \big),$
and consequently 
$(\mu^\ast\lambda)(m)(v_m) = \lambda( \mu(m)) \left(T_m \mu(v_m)\right) = \left(\mathbf{i}_{1_M} \omega\right)(m)(v_m),$
as desired. The map $\mu$ is defined up to the addition of constants (due to the freedom in the choice of $m_0$).

\subsection{Circle action and Berezin-Toeplitz quantization}

It turns out that there exists a natural way to derive a semiclassical quantization of this circle-valued moment map when $M$ is compact and $\omega$ is integral (in fact, integral up to a factor $2\pi$); this semiclassical quantization is called Berezin-Toeplitz quantization. It builds on geometric quantization, due to Kostant \cite{Kos} and Souriau \cite{Sou}. Berezin-Toeplitz operators were introduced by Berezin \cite{Ber}, their microlocal analysis was initiated by Boutet de Monvel and Guillemin \cite{BouGui}, and they have been studied by many authors since (see for instance the review \cite{Schli} and the references therein).

Assume that $(M, \omega)$ is a compact, connected, K\"ahler manifold, which means that it is endowed with an almost complex structure which is compatible with $\omega$ and integrable. We recall that an almost complex structure $j$ on $M$ is a smooth section of the bundle $\mathrm{End}({T}M) \to M$ such that 
$j^2 = -\mathrm{id}_{{T}M}$, and $j$ being integrable means that it induces on $M$ a structure of complex manifold. Compatibility between $\omega$ and $j$ means that $\omega(\cdot,j \cdot)$ is a Riemannian metric on $M$.

Assume that the cohomology class $[\omega \slash 2 \pi]$ lies in $H^2(M,\mathbb{Z})$. Then there exists a prequantum line bundle $L \to M$, that is a holomorphic, Hermitian complex line bundle whose Chern connection (the unique connection compatible with both the holomorphic and Hermitian structures) has curvature form equal to $-i \omega$. Then for any integer $k \geq 1$, the space
\[ \Hil_k =  H^0\left(M,L^{\otimes k}\right) \]
of holomorphic sections of the line bundle $L^{\otimes k} \to M$, endowed with the Hermitian product
\[ \phi, \psi \in \Hil_k \mapsto \scal{\phi}{\psi}_k = \int_M h_k(\phi, \psi) \mu_M \]
where $\mu_M$ is the Liouville measure associated with $\omega$ and $h_k$ is the Hermitian form on $L^{\otimes k}$ inherited from the one of $L$, is a finite dimensional Hilbert space.

Now, the quantization map $\mathrm{Op}_k: \classe{\infty}{(M,\mathbb{C})} \to \mathcal{L}(\Hil_k)$
is defined as follows: let $L^2(M,L^{\otimes k})$ be the space of square integrable sections of the line bundle $L^{\otimes k} \to M$, that is the completion of $\classe{\infty}(M,L^{\otimes k})$ with respect to $\scal{\cdot}{\cdot}_k$, and let $\Pi_k$ be the orthogonal projector from $L^2(M,L^{\otimes k})$ to $\Hil_k$. Then, given $f \in \classe{\infty}{(M,\mathbb{C})}$, let $\mathrm{Op}_k(f) = \Pi_k f$
where, by a slight abuse of notation, $f$ stands for the operator of multiplication by $f$ in $L^2(M,L^{\otimes k})$. Here the integer parameter $k$ plays the part of the inverse of $\hbar$, therefore the semiclassical limit corresponds to $k \to + \infty$ instead of $\hbar \to 0$.

\begin{lemma}
\label{lm:BTO_axioms}
The Berezin-Toeplitz quantization is a semiclassical quantization.
\end{lemma}

\begin{proof}
This work was done in \cite{PelPolVu} for axioms \ref{item:normalization} to \ref{item:product}. The fact that axiom \ref{item:composition} is satisfied comes, for instance, from \cite[Section 5]{BordMeinSchli}. Let us show that axiom \ref{item:reality} holds. For $\phi, \psi \in \Hil_k$, we have that
\[ \scal{\Pi_k (f \phi)}{\psi}_k = \scal{f \phi}{\psi}_k =  \int_M h_k(f\phi,\psi) \ \mu_M \]
because $\Pi_k$ is self-adjoint and $\Pi_k \psi = \psi$; by sesquilinearity of $h_k$, this yields
\[ \scal{\phi}{\bar{f} \psi}_k = \int_M h_k(\phi,\bar{f} \psi) \ \mu_M = \scal{\phi}{\bar{f} \psi}_k =  \scal{\phi}{\Pi_k (\bar{f} \psi)}_k.\]
This means that $\mathrm{Op}(f)^* = \mathrm{Op}(\bar{f})$.
\end{proof}

\begin{remark}
We have assumed that $M$ is K\"ahler for convenience, but there exist ways to construct a Berezin-Toeplitz quantization on a compact symplectic, not necessarily K\"ahler, manifold $(M, \omega)$ with $[\omega/(2\pi)]$ integral, see for instance \cite{BorUri,MaMa,Cha_symp}.
\end{remark}

Assume now that $M$ is endowed with a smooth symplectic, but not Hamiltonian, action of $\mathbb{S}^1$. We now identify $\R \slash \Z$ with the unit circle $\mathbb{S}^1$ in $\mathbb{C}$ by means of the map
$ \R \slash \Z \to \mathbb{S}^1, [t] \mapsto \exp(2i\pi t).$
Since the symplectic form $\tilde{\omega} = \omega / 2 \pi$ is integral, there exists a circle valued momentum map $\tilde{\mu}$ with respect to $\tilde{\omega}$ for the action, whose value at $m \in M$ is given by the formula
$ \tilde{\mu}(m)= \left[ \int_{\gamma _m}\mathbf{i}_{1_M} \tilde{\omega} \right],$
where $\gamma_m$ is a smooth path connecting a fixed point $m_0 \in M$ to $m$. Hence we get a function $\mu \in \classe{\infty}{(M,\mathbb{S}^1)}$ defined as 
$\mu(m) = \exp(2i\pi \tilde{\mu}(m)) = \exp \left(i \int_{\gamma _m}\mathbf{i}_{1_M} \omega \right).$
We associate to this function a unitary Berezin-Toeplitz operator as follows. Set $V(k) = \mathrm{Op}_k(\mu)$; then $V(k)$ is a Berezin-Toeplitz operator with principal symbol $\mu$ but may not be unitary. However, the operator 
$U(k) := V(k) \left(V(k)^* V(k) \right)^{-1/2}$
is well-defined, clearly unitary, and it follows from the stability of Berezin-Toeplitz operators with respect to smooth functional calculus \cite[Proposition 12]{Cha} that it is a Berezin-Toeplitz operator with principal symbol $\mu$.

\subsection{A family of examples}
\label{sect:example}

Following these constructions, we introduce a family of examples for manifolds $M = \T^{2d}$. We start with the case $d=1$.\\

\paragraph{\textbf{An example when $d=1$}}

A famous example of symplectic but non Hamiltonian circle action is the action of $\mathbb{S}^1 = \R \slash \mathbb{Z}$ on $\mathbb{T}^2 = \R^2 \slash \mathbb{Z}^2$ given by the formula: 
$[t] \cdot \left([q,p] \right) = \left([t + q, p] \right).$
Here the torus $\T^2 = \R^2 \slash \mathbb{Z}^2$ is endowed with the symplectic form coming from the standard one on $\R^2$, that is:
$\omega =  dp \wedge dq.$
The action is clearly symplectic, and is not Hamiltonian, for instance because it has no fixed point.
\begin{lemma}
The circle-valued momentum map associated with this action is 
$\tilde{\mu}([q,p]) = [p]$ up to the addition of a constant.
\end{lemma}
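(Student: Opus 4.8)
The plan is to unwind the defining equation $\mu^{\ast}\lambda = \mathbf{i}_{1_M}\omega$ directly in the coordinates $(q,p)$ on $\mathbb{R}^2/\mathbb{Z}^2$. First I would compute the infinitesimal generator of the action: since $\Phi_{[t]}([q,p]) = [t+q,p]$, differentiating at $t = 0$ gives the globally defined vector field $1_M = \partial/\partial q$ on $\mathbb{T}^2$. Next I would contract it with $\omega = dp\wedge dq$, using $\mathbf{i}_{\partial_q}(dp\wedge dq) = (\mathbf{i}_{\partial_q}dp)\wedge dq - dp\wedge(\mathbf{i}_{\partial_q}dq) = -dp$ (the overall sign depending only on the orientation convention for $\mathbb{S}^1 = \mathbb{R}/\mathbb{Z}$; up to that sign the one-form is $\pm dp$). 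In particular $\mathbf{i}_{1_M}\omega$ is closed, consistent with the action being symplectic, and its periods are integral: its integral over the $p$-cycle of $\mathbb{T}^2$ is $\mp 1 \in \mathbb{Z}$ and over the $q$-cycle is $0$, in accordance with the general fact $[\mathbf{i}_{1_M}\omega]\in H^1(M;\mathbb{Z})$ quoted from \cite[Lemma 7]{PelRat}.

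Then I would identify this one-form with $\mu^{\ast}\lambda$ for an explicit $\mu$. Recall that $\pi^{\ast}\lambda = dt$, i.e. $\lambda$ is the standard length form on $\mathbb{R}/\mathbb{Z}$; hence for any smooth map $g\colon\mathbb{T}^2\to\mathbb{R}/\mathbb{Z}$ of the form $g([q,p]) = [\,h(q,p)\,]$ with $h$ a function on the universal cover, one has $g^{\ast}\lambda = dh$. Taking $h(q,p) = p + c$ gives $g^{\ast}\lambda = dp = \mathbf{i}_{1_M}\omega$ (up to the sign discussed above), so $\mu([q,p]) = [p]$ is a circle-valued momentum map. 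Equivalently, one can feed $\mathbf{i}_{1_M}\omega = \pm dp$ directly into formula \eqref{mom}: $\mu(m) = \big[\int_{\gamma_m}\pm dp\big] = [\pm p(m) + \mathrm{const}]$, where the constant reflects the choice of base point $m_0$. Uniqueness up to an additive constant is then immediate, since any two primitives of the same closed one-form on a connected manifold differ by a constant, which matches the indeterminacy already noted for $\mu$ in the general construction.

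The computation is entirely routine; the only points requiring a little care are the bookkeeping of signs and orientation conventions (whether one contracts $\omega$ on the left or the right, and the orientation fixed on $\mathbb{R}/\mathbb{Z}$), and the verification that $\mathbf{i}_{1_M}\omega$ has integral periods so that \eqref{mom} actually descends to a genuine $\mathbb{R}/\mathbb{Z}$-valued — rather than multivalued — function. Both are settled at once by the explicit formula $\mathbf{i}_{1_M}\omega = \pm dp$, so there is no substantive obstacle: the content of the lemma is precisely that the construction of the circle-valued momentum map, applied to this action, reproduces the projection to the $p$-coordinate.
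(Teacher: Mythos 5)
Your proposal is correct and follows essentially the same route as the paper: compute the generator $1_M=\partial/\partial q$, contract with $\omega=dp\wedge dq$ to get $\mathbf{i}_{1_M}\omega=\pm\, dp$, and then read off the momentum map either by direct verification of $\mu^{\ast}\lambda=\mathbf{i}_{1_M}\omega$ or via formula \eqref{mom} with a path from the base point. The paper simply picks the explicit straight-line path $\gamma_m(t)=[tq,tp]$ and evaluates $\int_{\gamma_m}dp=p$, which is your second variant; your added remarks about integrality of periods and sign conventions are sound (and in fact a touch more careful than the paper, which writes $\mathbf{i}_{1_M}\omega=dp$ although with $\omega(1_M,\cdot)$ and the usual wedge convention one gets $-dp$; the sign is immaterial here because the lemma only pins $\tilde\mu$ down up to an additive constant and the subsequent analysis is insensitive to replacing $\mu$ by $\bar\mu$).
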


\begin{proof}
Using the notation of the previous section, we have that 
$\Phi_{[t]}([q,p]) = [t + q,p],$
hence
$1_M([q,p]) =  \frac{\partial }{\partial q},$
therefore $\mathbf{i}_{1_M} \omega = dp$. Take $m_0 = [0,0] \in \T^2$ and let $m = [q,p]$ be any point in $\T^2$. Then
$ \gamma_m: [0,1] \to \T^2, \quad t \mapsto [tq,tp]$
is a smooth path connecting $m_0$ to $m$. Thus
$\tilde{\mu}(m) = \int_{\gamma_m} dp = \int_0^1 p \ dt = p.$
\end{proof}

As in the previous part, this map gives rise to a map 
$\mu \in \classe{\infty}{\left(\T^2,\mathbb{S}^1 \right)}$, $\mu([q,p]) = \exp(2i \pi p).$
We have a natural semiclassical operator associated with this momentum map, in the setting of Berezin-Toeplitz quantization. Firstly, let us briefly describe the geometric quantization of the torus, although it is now quite standard (see \cite[Chapter I.3]{Mum} for instance). Let $L_{\R^2} \to \R^2$
be the trivial line bundle with standard Hermitian form and connection $d-i\alpha$, where $\alpha$ is the 1-form defined as $\alpha_u(v) = \frac{1}{2}\omega(u,v),$
equipped with the unique holomorphic structure compatible with the Hermitian structure and the connection. Consider a lattice $\Lambda \subset \R^2$ of symplectic volume $4 \pi$. The Heisenberg group 
$H = \R^2 \times U(1)$ with product
$ (x,u) \star (y,v) = (x+y,uv\exp(\frac{i}{2}\omega_{0}(x,y)))$
acts on $L_{\R^2}$, with action given by the same formula. This action preserves all the relevant structures, and the lattice $\Lambda$ injects into $H$; therefore, by taking the quotient, we obtain a prequantum line bundle $L$ over $\mathbb{T}^2 = \R^2 \slash \Lambda$. Furthermore, the action extends to the line bundle $L_{\R^2}^{\otimes k}$ by
$(x,u).(y,v) = (x+y,u^k v\exp(\frac{ik}{2}\omega_{0}(x,y))).$
We thus get an action $T^*:  \Lambda  \rightarrow  \text{End}(\classe{\infty}{(\R^2,L_{\R^2}^{\otimes k})}), \quad u  \mapsto  T_{u}^*.$ The Hilbert space $\Hil_{k} = H^0(\T^2,L^{\otimes k})$ can naturally be identified with the space $\Hil_{\Lambda,k}$ of holomorphic sections of $L_{\R^2}^{\otimes k}  \rightarrow \R^2$ which are invariant under the action of $\Lambda$, endowed with the Hermitian product
$\langle \phi,\psi \rangle_k = \int_{D}\phi \overline{\psi} \ |\omega|$
where $D$ is the fundamental domain of the lattice. Furthermore, $\Lambda \slash 2k$ acts on $\Hil_{\Lambda,k}$. Let $e$ and $f$ be generators of $\Lambda$ satisfying $\omega(e,f) = 4 \pi$; one can show that there exists an orthonormal basis $(\psi_{\ell})_{\ell \in \Z \slash 2k\Z}$ of $\Hil_{\Lambda,k}$ such that
\[ \forall \ell \in \Z \slash 2k\Z \qquad \left\{\begin{array}{c} T^*_{e/2k} \psi_{\ell} = w^{\ell} \psi_{\ell}  \\ \\ T^*_{f/2k} \psi_{\ell} = \psi_{\ell + 1} \end{array}\right. \]
with $w = \exp\left( \frac{i \pi}{k} \right)$. The $\psi_{\ell}$ can be computed using Theta functions.

Now, set  $U(k) = T^*_{e/2k}: \Hil_k \to \Hil_k;$ 
of course, $U(k)$ is unitary. Let $(q,p)$ be coordinates on $\R^2$ associated with the basis $(e,f)$ and $\left[q,p\right]$ be the equivalence class of $(q,p)$. It is known \cite[Theorem $3.1$]{ChaMar} that $U(k)$ is a Berezin-Toeplitz operator with principal symbol $\left[q,p\right]  \mapsto  \exp(2 i \pi p),$
which is precisely $\mu$. Trivially, $\mathrm{Sp}(U(k)) = \left\{ \exp(i \pi \ell/k),  0 \leq \ell \leq 2k-1 \right\}$
which is dense in $\mu(\T^2) = \mathbb{S}^1$ when $k$ goes to infinity. Thus, this example is interesting because the assumptions of Theorem \ref{thm:CH_simpleF} are not satisfied, since $\mu$ is onto, yet we can recover $\mu(M)$ from the spectrum of $U(k)$ when $k \to + \infty$.  \\

\paragraph{\textbf{The higher dimensional case.}}

More generally, we can consider $d$ symplectic but non Hamiltonian circle actions on $M = \mathbb{T}^{2d} = (\T^2)^d$, endowed with the symplectic form coming from
$\omega = {\rm d}p_1 \wedge {\rm d}q_1 + \ldots + {\rm d}p_d \wedge {\rm d}q_d$
as follows: for $j \in \llbracket 1, d \rrbracket$, the $j$-th action is the action of $\mathbb{S}^1$ described above applied to the $j$-th copy of $\T^2$:
\[ [t].[q_1, p_1, \ldots,q_{d},p_d] = [q_1, p_1, \ldots, q_{j-1}, p_{j-1}, t + q_j, p_j, q_{j+1}, p_{j+1}, \ldots,q_{d},p_d].  \]
This action admits the circle valued moment map $\mu_j \in \classe{\infty}{\left(\T^{2d},\mathbb{S}^1\right)},$
where $\mu_j([q_1, p_1, \ldots,q_{d},p_d]) = \exp(2i \pi p_j).$ Now, we recall the following useful property of Berezin-Toeplitz quantization with respect to direct products: if $M_1, M_2$ are two compact connected K\"ahler manifolds endowed with prequantum line bundles $L_1$ and $L_2$ respectively, the line bundle
\[ L = L_1 \boxtimes L_2 := \pi_1^*L_1 \otimes \pi_2^* L_2 \to M = M_1 \times M_2\]
is a prequantum line bundle (here $\pi_j: M \to M_j$ is the natural projection). Moreover, the quantum Hilbert spaces satisfy
\[ H^0(M,L^{\otimes k}) = H^0(M_1,L_1^{\otimes k}) \otimes H^0(M_2,L_2^{\otimes k}) \]
and, if $f_j \in \classe{\infty}{(M,\mathbb{C})}$, $j = 1,2$, then 
$\mathrm{Op}_k(f) = \mathrm{Op}_k(f_1) \otimes \mathrm{Op}_k(f_2)$
for $f(m_1,m_2) = f(m_1)f(m_2)$. Coming back to our example where the manifold is $M = \T^2 \times \ldots \times \T^2$, we quantize $\T^2$ as explained in the previous section and we obtain a family of quantum spaces
$\Hil_k =  H^0(\T^{2}, L^{\otimes k})^{\otimes d}$
with orthonormal basis 
$(\psi_{\ell_1} \otimes \ldots \otimes \psi_{\ell_d})_{\ell_1, \ldots, \ell_d \in \Z \slash 2k\Z}.$
Let $U(k)$ be the same operator as in the previous section, and introduce the operator
\[ V_j(k) := \mathrm{Id} \otimes \ldots \otimes \mathrm{Id} \otimes \underbrace{U(k)}_{j-\mathrm{th \ position}} \otimes \ldots \otimes \mathrm{Id}  \]
for every $j \in \llbracket 1,d \rrbracket$. Then $(V_1(k), \ldots, V_d(k))$ is a family of pairwise commuting unitary Berezin-Toeplitz operator acting on $\Hil_k$, with joint principal symbol $\mu = (\mu_1, \ldots, \mu_d)$. Its joint spectrum is equal to
\[  \left\{ \left( \exp\left( \frac{i \pi \ell_1}{k} \right), \ldots, \exp\left( \frac{i \pi \ell_d}{k} \right) \right), \ \ell_1, \ldots, \ell_d \in \Z \slash 2k\Z \right\} \]
and again, from this we recover $\mu(M) = \T^d$ when $k$ goes to infinity.

 \section{Final remarks}

We conclude with some remarks.

\begin{enumerate}
\item
The results of this paper do not directly follow from the self\--adjoint case for two reasons. First, in order to use the result obtained in \cite{PelPolVu} for self\--adjoint operators, which seems to be a fairly natural plan of attack, we wanted to transform our unitary operators into self-adjoint operators, using the Cayley transform, which can only be applied to unitary operators not containing $-1$ in their spectrum. Second, both the joint spectrum of a family of commuting semiclassical unitary operators and the image of its joint momentum map are subsets of a $d$\--torus and dealing with convex hulls inside is not obvious;  the naive notion of convex hull (obtained by lifting, taking the convex hull and then projecting back) leads to a set which is in general much larger that the actual set; henceforth one of our goals in the second appendix will be to give a procedure to  find the convex hull which leads to the desired convergence result. 
\item
It would of course be more satisfying to prove that the quantum joint spectrum converges to the classical spectrum, without mentioning convex hulls. This problem has been investigated by Pelayo and V{\~u} Ng{\d{o}}c \cite{PelVu} in the context of self-adjoint operators. Nevertheless, even in the self\--adjoint case, getting rid of these convex hulls does not come for free; one needs to introduce an additional axiom, which restricts the class of operators to which the result can be applied. Indeed, as explained in the article cited above, this axiom is not satisfied by general classes of pseudodifferential operators, but only by particular classes, such as the one of pseudodifferential operators with uniformly bounded symbols. Consequently, in order to state a result which is as general as possible, we have not tried to get rid of convex hulls here, even though this would have greatly simplified this particular aspect of the problem.
\item
We would like to get rid of the assumption on the surjectivity of the principals symbols. We consider pairwise commuting unitary semiclassical operators 
$U_1(\hbar), \ldots, U_d(\hbar)$ with joint principal symbol 
$F = (f_0^1, \ldots, f_0^d).$
We still assume that $F(M)$ is closed. We conjecture the following: assume that $F(M)$ is hullizable (see Definition \ref{def:hullizable}). Then from the behaviour of the joint spectrum $\mathrm{JointSpec}(U_1(\hbar), \ldots, U_d(\hbar))$ when $\hbar$ goes to zero, one can recover the convex hull of $F(M)$. 
We have given evidence for this conjecture in Section \ref{sect:example}, but first let us make a few comments about it. Firstly, ``recover'' can have several meanings, but it would be appreciable to obtain a statement similar to Theorem \ref{thm:CH_simpleF} involving the convex hull of the joint spectrum; however, the latter may no longer be simple, so we would need to give a meaning to its convex hull. Secondly, in order to prove this conjecture, using axioms \ref{item:composition} to  \ref{item:func_calc} only might not be enough, thus a natural problem would be to look for the minimal set of additional axioms needed for this proof.
\item
The interest for non self\--adjoint operators in physics in general and quantum mechanics in particular has been growing in the last few decades. They appear, for instance, in the study of damping and resonances, see \cite{Dav} for a review. Moreover, much attention has been given to $\mathcal{PT}$-symmetric operators (see \cite{Ben} for an introduction), which are non self\--adjoint but may still have a real spectrum under certain conditions. Let us also mention that semiclassical techniques have been applied to describe the structure of the spectrum of certain non self\--adjoint perturbations of self\--adjoint operators \cite{HitSjo, Rou}. For all these reasons, including non self\--adjoint operators in an axiomatic definition of semiclassical operators is relevant. \end{enumerate}

\section*{Appendix 1: Basic operator theory}

Let $\Hil$ be a Hilbert space, with scalar product $\scal{\cdot}{\cdot}$; we use the notation $\|\cdot \|$ for the associated norm. We will need to work with possibly unbounded linear operators acting on $\Hil$, hence we introduce some standard terminology (for more details, we refer the reader to standard material, as \cite[Chapter VIII]{ReedSimon} or \cite[Appendix 3]{HislopSigal} for instance). A linear operator acting on $\Hil$ is the data of a linear subspace $\mathcal{D}(T) \subset \Hil$, called the domain of $T$, and a linear map 
$T: \mathcal{D}(T) \to \Hil$. Throughout the paper, $\mathcal{L}(\Hil)$ will denote the set of densely defined (that is with dense domain) linear operators on $\Hil$. The range $\mathcal{R}(T)$ of a linear operator $T$ is the set of all values $Tu$, $u \in \mathcal{D}(T)$.

We say that the operator $T$ is \emph{bounded} if there exists a constant $C \geq 0$ such that for every $u \in \mathcal{D}(T)$, $\| Tu\| \leq C \|u\|.$ If this is the case, by a slight abuse of notation, we will write $\|T\|$ for its operator norm, defined as 
\[ \|T\| = \sup_{\substack{u \in \mathcal{D}(T), u \neq 0}} \frac{\|Tu\|}{\|u\|}.\]
Let us recall that if $T$ is a bounded operator, it admits a bounded extension with domain $\Hil$ (see \cite[Proposition A.3.9]{HislopSigal} for example). 

If $T$ is a densely defined linear operator acting on $\Hil$, its adjoint is defined as follows: let $\mathcal{D}(T^*)$ be the set of $u \in \Hil$ such that there exists $v_u \in \Hil$ satisfying: $\forall w \in \mathcal{D}(T), \scal{Tw}{u} = \scal{w}{v_u}$. Then for $u \in \mathcal{D}(T^*)$, this $v_u$ is unique and we set $T^* u = v_u$. This defines a linear operator acting on $\Hil$, with domain $\mathcal{D}(T^*)$ not necessarily dense; $T^*$ is called the adjoint of $T$. A densely defined closed operator is said to be \emph{normal} when $T T^* = T^* T$ (this equality includes the fact that the domains of these operators agree). Normal operators are of particular interest because they satisfy the spectral theorem \cite[Chapter X, Theorem 4.11]{Conway} which associates to the operator a spectral measure and spectral projections. Two normal operators $A, B \in \mathcal{L}(\Hil)$ are said to \emph{commute} if and only if all their spectral projections commute (cf. for instance \cite[Proposition 5.27]{Schmu}). A densely defined operator $T$ is said to be \emph{self-adjoint} when $T^* = T$.

An operator $T \in \mathcal{L}(\Hil)$ is said to be \emph{positive}, in which case we will write $T \geq 0$, when $\scal{T u}{u} \geq 0$ for every $u \in \mathcal{D}(T)$; if there exists some constant $c \in \R$ such that $T - c \ \mathrm{Id} \geq 0$, then we write $T \geq c \ \mathrm{Id} $.

We say that $T \in \mathcal{L}(\Hil)$ is \emph{invertible} if it admits a bounded inverse, that is a bounded operator 
$T^{-1}:\mathcal{R}(T) \to \mathcal{D}(T)$
such that $T T^{-1} = \mathrm{Id}_{\mathcal{R}(T)}$ and $T^{-1} T = \mathrm{Id}_{\mathcal{D}(T)}$. In this case, $T^{-1}$ is unique. A bounded operator $U$ acting on $\Hil$ is said to be unitary if it is invertible and $U^{-1} = U^*$. Now, we define the spectrum $\mathrm{Sp}(T) \subset \mathbb{C}$ of a given $T \in \mathcal{L}(\Hil)$ as follows: 
$\lambda \in \mathrm{Sp}(T)$ if and only if $\lambda \mathrm{Id} - T$ is \ not \ invertible.
It is standard that the spectrum of a self-adjoint (respectively unitary) operator is a subset of $\R$ (respectively the unit circle $\mathbb{S}^1$).

Finally, recall the following useful result about the norm of a self-adjoint operator. If $A$ is self-adjoint, then
\begin{equation} \sup_{\lambda \in \mathrm{Sp}(A)} |\lambda| = \sup_{\substack{u \in \mathcal{D}(A) \\ u \neq 0}} \frac{|\scal{Au}{u}|}{\|u\|^2} = \sup_{\substack{u \in \mathcal{D}(A) \\ u \neq 0}} \frac{\| Au\|}{\|u\|} \leq + \infty. \label{eq:rayleigh}\end{equation}
This result is standard but very often stated for bounded operators only; a concise proof can be found in \cite[Section 3]{PelPolVu}.

 A finite number of normal operators $S_1,\ldots,S_d$ on a Hilbert space are said to be \emph{mutually commuting}
if  their corresponding spectral measures $\mu_1,\dots,\mu_d$ pairwise commute. In this case we may define the joint spectral measure  $\mu := \mu_1\otimes\cdots\otimes\mu_d$ on $\mathbb{C}^d$.
 We are concerned with semiclassical operators, that is, the operator itself is given by a sequence of operators, labelled by the Planck constant $\hbar$, considered as a small parameter. Let $I$ be a subset of $(0,1]$ that accumulates at $0$. Let 
\[ \mathcal{F} =(T_1:=(T_1(\hbar))_{\hbar \in I},\dots,T_d:=(T_d(\hbar))_{\hbar \in I}) \]
be a collection of pairwise commuting semiclassical normal operators. These operators depend on the parameter $\hbar \in I$ and act on a Hilbert space $\mathcal{H}_{\hbar},\; \hbar\in I$.  We assume that at each $\hbar \in I$ the operators have a common dense domain $\mathcal{D}_{\hbar} \subset \mathcal{H}_{\hbar}$ such that the inclusion $T_j(\hbar)(\mathcal{D}_{\hbar}) \subset
\mathcal{D}_{\hbar}$ holds for all $j=1,\ldots, d$. For a fixed value of $\hbar$, the \emph{joint spectrum} of $(T_1(\hbar),\ldots,T_d(\hbar))$ is the support of their joint spectral measure. It is denoted by $\op{JointSpec}(T_1(\hbar),\ldots,T_d(\hbar))$.
For instance, if the Hilbert space $\mathcal{H}_\h$ is finite dimensional (which is the case for instance with Berezin-Toeplitz operators on closed symplectic manifolds), then $\op{JointSpec}(T_1(\hbar),\ldots,T_d(\hbar))$ is the set of $(\lambda_1,\ldots,\lambda_d) \in \R^d$ such that there exists $v\neq 0$ satisfying $T_j(\hbar) v = \lambda_j v$ for all $j=1,\ldots,d$.  
The \emph{joint spectrum} $\op{JointSpec}(T_1,\ldots,T_d)$ of $(T_{1},\ldots,T_{d})$ is the collection of all joint spectra of $T_{1}(\hbar),\ldots,T_{d}(\hbar)$ for $\hbar \in I$.

\section*{Appendix 2: Taking convex hull in tori}
\label{sect:convex_hull_torus}

In this section we propose a way to define the convex hull of a subset of the torus $\T^d$. We believe that the definition of toric convex hull that we introduce here could be of interest in other settings, for instance in computational geometry; see \cite{GriMar} where a first definition was proposed, but the convex hulls constructed using it were too large for practical applications.

\subsection*{Preliminaries about $\T^d$}

We consider $\T^d = (\mathbb{S}^1)^d$ as the product of $d$ copies of the unit circle. If $z$ belongs to the unit circle, we will denote by $\arg(z)$ its argument in $(-\pi,\pi]$. Furthermore, we will also denote by $\arg: \mathbb{T}^d = (\mathbb{S}^1)^d \to (-\pi,\pi]^d$
the function assigning its argument to each component of $z \in \mathbb{T}^d$:
$\arg(z_1,\ldots,z_d) = \left(\arg(z_1),\ldots,\arg(z_d) \right).$
Similarly, we will consider the function
\[\exp: \mathbb{C}^d \to \mathbb{C}^d, \quad (w_1, \ldots, w_d) \mapsto (\exp(w_1), \ldots, \exp(w_d)). \]

We endow $\mathbb{T}^d$ with the following distance: for $z, w \in \mathbb{T}^d$
\[ d^{\mathbb{T}^d}(z,w) = \min_{\theta \in (2\pi\Z)^d} \| \arg(z) - \arg(w) + \theta \|_{\R^d} . \]
The Hausdorff distance induced by this distance will be denoted by $d_{H}^{\T^d}$.

\subsection*{Multiplication in $\mathbb{T}^d$}

Let $a = (a_1,\ldots,a_d), b = (b_1,\ldots,b_d)$ be two points in $\mathbb{T}^d = (\mathbb{S}^1)^d$. Then we use the following notation for the product of $a$ and $b$ in $\mathbb{T}^d$: 
$a \cdot b = (a_1 b_1, \ldots, a_d b_d).$
Now, given a subset $E$ of the torus $\mathbb{T}^d$ and a point $a \in \mathbb{T}^d$, we define the set $a.E$ as the set of all points of the form $a \cdot z$, $z \in E$. Moreover, we use the notation $a^{-1} \in \mathbb{T}^d$ to denote the point $(a_1^{-1}, \ldots, a_d^{-1})$. An easy consequence of our choice of distance on $\T^d$ is that the associated Hausdorff distance $d_H^{\T^d}$ is multiplication invariant.

\begin{lemma}
\label{lm:trans_haus}
Let $E, F \subset \mathbb{T}^d$. Then 
$d_H^{\mathbb{T}^d}(a \cdot E, a \cdot F) = d_H^{\mathbb{T}^d}(E, F)$
for every $a \in \mathbb{T}^d$.
\end{lemma}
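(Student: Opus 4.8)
The statement asserts that the Hausdorff distance on $\T^d$ induced by the metric $d^{\T^d}$ is invariant under translation by a fixed $a \in \T^d$. The natural route is to first establish the pointwise statement that the underlying metric $d^{\T^d}$ itself is translation invariant, namely
\[ d^{\T^d}(a \cdot z, a \cdot w) = d^{\T^d}(z, w) \qquad \text{for all } z, w, a \in \T^d, \]
and then deduce the Hausdorff-level statement as a formal consequence. For the metric-level claim, write $z = (z_1, \ldots, z_d)$, $w = (w_1, \ldots, w_d)$, $a = (a_1, \ldots, a_d)$ with each coordinate on $\mathbb{S}^1$. The key arithmetic fact is that for unit complex numbers $z_j, a_j$, one has $\arg(a_j z_j) = \arg(a_j) + \arg(z_j) + 2\pi k_j$ for some integer $k_j$ depending on $z_j, a_j$; hence $\arg(a \cdot z) = \arg(a) + \arg(z) + \kappa$ for some $\kappa \in (2\pi\Z)^d$, and similarly $\arg(a \cdot w) = \arg(a) + \arg(w) + \kappa'$ with $\kappa' \in (2\pi\Z)^d$. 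Therefore
\[ \arg(a \cdot z) - \arg(a \cdot w) + \theta = \bigl(\arg(z) - \arg(w)\bigr) + (\kappa - \kappa' + \theta), \]
and as $\theta$ ranges over $(2\pi\Z)^d$ so does $\kappa - \kappa' + \theta$ (since $\kappa - \kappa' \in (2\pi\Z)^d$). Taking the minimum of the Euclidean norm over all such shifts gives exactly $d^{\T^d}(z, w)$, which proves the metric-level invariance.

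\textbf{From the metric to the Hausdorff distance.} This is a general and routine fact: if $\phi \colon (X, d) \to (X, d)$ is an isometry, then $d_H^X(\phi(A), \phi(B)) = d_H^X(A, B)$ for all $A, B \subseteq X$. Apply it with $\phi$ the map $z \mapsto a \cdot z$, which is a bijection of $\T^d$ (with inverse $z \mapsto a^{-1} \cdot z$) and is an isometry by the previous paragraph. Concretely, for any $\varepsilon > 0$ one has $\phi(S_\varepsilon) = (\phi(S))_\varepsilon$ because $\phi$ preserves distances and is onto; hence $A \subseteq B_\varepsilon$ if and only if $\phi(A) \subseteq \phi(B_\varepsilon) = (\phi(B))_\varepsilon$, and symmetrically for $B \subseteq A_\varepsilon$. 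Thus the set of admissible $\varepsilon$ in the definition of $d_H^{\T^d}(A, B)$ coincides with that for $d_H^{\T^d}(\phi(A), \phi(B))$, and the infima agree. Taking $A = E$, $B = F$ yields the lemma.

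\textbf{Main obstacle.} There is no serious obstacle; the only point requiring a modicum of care is the bookkeeping with the branch cuts of $\arg$, i.e.\ the fact that $\arg$ is only additive up to an element of $(2\pi\Z)^d$. The argument above is designed precisely so that this ambiguity is harmless: the integer shifts $\kappa, \kappa'$ are absorbed into the quantifier over $\theta \in (2\pi\Z)^d$, which is exactly why the distance $d^{\T^d}$ was defined with that minimization in the first place. One should simply make sure to state explicitly that $\kappa - \kappa' \in (2\pi\Z)^d$ so that the reindexing $\theta \mapsto \kappa - \kappa' + \theta$ is a bijection of $(2\pi\Z)^d$, which is what makes the two minima equal.
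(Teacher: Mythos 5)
Your proof is correct, and the paper in fact states this lemma without proof (it is introduced as ``an easy consequence of our choice of distance on $\T^d$''), so there is no distinct argument in the paper to compare against. Your two-step route — first the pointwise statement that $z \mapsto a \cdot z$ is an isometry of $(\T^d, d^{\T^d})$, handled by absorbing the $(2\pi\Z)^d$ ambiguity of $\arg$ into the minimization defining the metric, and then the general fact that isometries preserve Hausdorff distance — is exactly the intended content, and you correctly flag the one subtle point (the reindexing $\theta \mapsto \kappa - \kappa' + \theta$ is a bijection of $(2\pi\Z)^d$ because $\kappa - \kappa' \in (2\pi\Z)^d$).
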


\subsection*{Convex hulls for simple subsets of $\mathbb{T}^d$}

If we could lift everything to $\mathbb{R}^d$ without any trouble, we would define the convex hull of a subset $E$ of $\T^d$ as the projection of the convex hull of its lift. This naive idea cannot be used in general, but can be adapted for what we will call simple subsets (see the definitions below), whose image under $\arg$ is contained in $(-\pi,\pi)^d$. For instance, it works as is if $E$ is simple and connected. However, if we drop connectedness, which is a natural thing to do since one often wants to compute the convex hull of a collection of points, then choices are involved, and defining the convex hull is subtle.

\begin{definition}
A subset $E \subset \mathbb{T}^d$ is called \emph{very simple} if for every point $(z_1,\ldots,z_d) \in E$ and for all $j \in \llbracket 1,d \rrbracket$, $z_j \neq -1$.
\end{definition}

For $j \in \llbracket 1,d \rrbracket$, let $p_j: \mathbb{T}^d = (\mathbb{S}^1)^d \to \mathbb{S}^1$ be the projection on the $j$-th factor.

\begin{definition}
A subset $E \subset \mathbb{T}^d$ is called \emph{simple} if no ${p_j}_{|E}$ is onto.
\end{definition}

\begin{remark}
If $E$ is simple, there exists $a \in \mathbb{T}^d$ such that $a \cdot E$ is very simple. A set consisting of a finite number of points is always simple.
\end{remark}

Recall that for a subset $E$ of the torus $\mathbb{T}^d$ and a point $a \in \mathbb{T}^d$, we define the set $a \cdot E$ as the set of points of the form $a \cdot z$, $z \in E$. 

\begin{lemma}
\label{lm:thetas}
Let $E \subset \mathbb{T}^d$ be simple and compact, with finitely many connected components $E_1, \ldots, E_n$. Let $b,c \in \T^d$ be such that both $b \cdot E$ and $c \cdot E$ are very simple. Then for every $j \in \llbracket 1, N \rrbracket$, there exists a constant $\theta_j^{(b,c)} \in (2\pi\Z)^d$ such that for all $z \in E_j$, 
\[ \arg(c \cdot z) = \arg(b \cdot z) + \arg(c \cdot b^{-1}) + \theta_j^{(b,c)}.\]
We call $\theta_j^{(b,c)}$ the \emph{phase shift} of $E_j$ with respect to $(b,c)$.
\end{lemma}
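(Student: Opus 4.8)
The plan is to work component by component on each connected piece $E_j$, reducing the statement to the elementary fact that on a connected subset of $\mathbb{S}^1\setminus\{-1\}$ the argument function $\arg\colon\mathbb{S}^1\setminus\{-1\}\to(-\pi,\pi)$ is a continuous branch of the logarithm, hence it intertwines multiplication and addition up to a locally constant $2\pi\mathbb{Z}$-correction. Concretely, fix $j\in\llbracket 1,N\rrbracket$ (I take the indexing to run over the connected components, i.e. $N=n$). Since $b\cdot E$ and $c\cdot E$ are very simple, for each $z\in E_j$ none of the components of $b\cdot z$ or $c\cdot z$ equals $-1$, so $\arg(b\cdot z)$ and $\arg(c\cdot z)$ are well-defined points of $(-\pi,\pi)^d$. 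Consider the map
\[ \Phi_j\colon E_j \to (2\pi\mathbb{Z})^d, \qquad \Phi_j(z) = \arg(c\cdot z) - \arg(b\cdot z) - \arg(c\cdot b^{-1}). \]
First I would check that $\Phi_j$ is indeed $(2\pi\mathbb{Z})^d$-valued: working in each coordinate $k$, we have $c_k z_k = (c_k b_k^{-1})(b_k z_k)$ in $\mathbb{S}^1$, so $\arg(c_k z_k)$ and $\arg(b_k z_k)+\arg(c_k b_k^{-1})$ are two arguments of the same unit complex number, hence differ by an integer multiple of $2\pi$.

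Next I would show $\Phi_j$ is continuous on $E_j$. The functions $z\mapsto\arg(c\cdot z)$ and $z\mapsto\arg(b\cdot z)$ are each continuous on $E_j$: indeed $\arg$ is continuous on $\mathbb{S}^1\setminus\{-1\}$, and by very simplicity of $c\cdot E$ and $b\cdot E$ the images $c\cdot E_j$ and $b\cdot E_j$ avoid the points where some coordinate is $-1$. Their difference minus the constant $\arg(c\cdot b^{-1})$ is therefore continuous, so $\Phi_j$ is a continuous map from the connected set $E_j$ to the discrete set $(2\pi\mathbb{Z})^d$. A continuous map from a connected space to a discrete space is constant; call the value $\theta_j^{(b,c)}\in(2\pi\mathbb{Z})^d$. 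Rearranging $\Phi_j(z)=\theta_j^{(b,c)}$ gives exactly $\arg(c\cdot z) = \arg(b\cdot z) + \arg(c\cdot b^{-1}) + \theta_j^{(b,c)}$ for all $z\in E_j$, as claimed. Compactness of $E$ is not strictly needed for this argument beyond guaranteeing finitely many components and their closedness, which is already built into the hypotheses.

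The only genuinely delicate point — and the one I would be most careful about — is the continuity of $z\mapsto\arg(b\cdot z)$ on $E_j$: the argument function has a jump discontinuity at $-1$, and the whole statement is designed around avoiding it, so the hypothesis that $b\cdot E$ (not merely $b\cdot E_j$) is very simple must be invoked precisely here to ensure that each coordinate of $b\cdot z$ stays in $\mathbb{S}^1\setminus\{-1\}$, a set on which $\arg$ is continuous. Everything else is bookkeeping: the coordinatewise reduction, the identity $c_k b_k^{-1}\cdot b_k z_k = c_k z_k$ in $\mathbb{S}^1$, and the connectedness-implies-constant principle. No new axioms or semiclassical input are used; this is a purely topological lemma about the torus.
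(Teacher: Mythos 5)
Your proof is correct and follows essentially the same route as the paper: you define the $(2\pi\Z)^d$-valued correction term, observe it is continuous because very simplicity of $b\cdot E$ and $c\cdot E$ keeps the arguments away from the discontinuity of $\arg$ at $-1$, and conclude constancy on each connected component by the connected-to-discrete principle. Your side remark that compactness is not truly needed here (beyond the finitely-many-components hypothesis) is accurate; the paper's appeal to a compact $K\subset(\mathbb{S}^1\setminus\{-1\})^d$ is a harmless but unnecessary extra step.
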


\begin{proof}
Let $z \in E$; then $c \cdot z = (c \cdot b^{-1}) \cdot (b \cdot z)$, therefore 
\[ \arg(c \cdot z) = \arg(b \cdot z) + \arg(c \cdot b^{-1}) + \theta(z) \]
for some $\theta(z) \in (2\pi \Z)^d$. But the function $z \mapsto \arg(c \cdot z) - \arg(b \cdot z)$ is continuous, since $b \cdot E$ and $c \cdot E$ are very simple; indeed, if a compact set $H \subset \T^d$ is very simple, then $H$ is contained in some compact subset $K$ of $(\mathbb{S}^1 \setminus \{-1\})^d$, and the function $\arg: K \to (-\pi,\pi)^d$ is continuous. Hence the same holds for $z \mapsto \theta(z)$, and thus $\theta(z) = \theta(w)$ whenever $z$ and $w$ belong to the same connected component of $E$. Consequently, for every $j \in \llbracket 1,d \rrbracket$, there exists a constant $\theta_j^{(b,c)} \in (2\pi\Z)^d$ such that for every $z \in E_j$, $\theta(z) = \theta_j^{(b,c)}$, which was to be proved.
\end{proof}

Given a compact subset $I$ of $\R$, we use the notation $\mathrm{diam}(I)$ for the diameter of $I$:
$\mathrm{diam}(I) = \max \left\{ |x-y|, \quad x,y \in I \right\},$
with the convention that $\mathrm{diam}(\emptyset) = 0$. Furthermore, let $\eta_j$, $1 \leq j \leq d$ denote the natural projection $\R^d \to \R, \ (x_1, \ldots, x_d) \mapsto x_j$.

\begin{lemma}
\label{lm:diam}
Let $E \subset \mathbb{T}^d$ be simple and compact, with finitely many connected components. Then there exists $b \in \mathbb{T}^d$ such that $b \cdot E$ is very simple and 
\[ \forall j \in \llbracket 1,d \rrbracket, \quad \mathrm{diam}(\eta_j\left(\arg(b \cdot E)\right)) = \min \Lambda_j  \]
where 
\[ \Lambda_j = \left\{ \mathrm{diam}(\eta_j\left(\arg(c \cdot E)\right)) \left| \ c \in \T^d, c \cdot E \ \mathrm{very \ simple}  \right\}\right. .\] 
We say that such a point $b \in \T^d$ is \emph{admissible for $E$}. 
\end{lemma}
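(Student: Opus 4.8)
The plan is to reduce the statement to a one-dimensional compactness argument, applied coordinate by coordinate. Fix $j \in \llbracket 1,d \rrbracket$. The key observation is that Lemma~\ref{lm:thetas} tells us exactly how the quantity $\mathrm{diam}(\eta_j(\arg(c \cdot E)))$ depends on $c$: if $b$ and $c$ are two points such that $b\cdot E$ and $c\cdot E$ are both very simple, then on each connected component $E_\ell$ of $E$ the functions $\arg(c\cdot z)$ and $\arg(b\cdot z)$ differ by the fixed vector $\arg(c\cdot b^{-1}) + \theta_\ell^{(b,c)}$. Projecting onto the $j$-th coordinate, $\eta_j(\arg(c\cdot E))$ is, componentwise, a translate of $\eta_j(\arg(b\cdot E_\ell))$ by $\eta_j(\arg(c\cdot b^{-1})) + (\theta_\ell^{(b,c)})_j$; since the phase shifts $(\theta_\ell^{(b,c)})_j$ range over a finite subset of $2\pi\Z$ (there are only finitely many components), the set $\eta_j(\arg(c\cdot E))$ is a finite union of translates, by integer multiples of $2\pi$ plus a common real offset, of the fixed compact sets $I_\ell := \eta_j(\arg(b\cdot E_\ell))$. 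Its diameter therefore depends only on the \emph{relative integer shifts} between the components (the common real offset drops out of a diameter), so $\Lambda_j$ is a set of diameters indexed by finitely many integer tuples $(k_1,\dots,k_n) \in \Z^n$ — namely $\mathrm{diam}\big(\bigcup_{\ell} (I_\ell + 2\pi k_\ell)\big)$ — subject only to the constraint that the resulting configuration is realizable by some $c$ with $c\cdot E$ very simple.

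First I would fix one auxiliary point $a \in \mathbb{T}^d$ with $a\cdot E$ very simple (this exists because $E$ is simple, by the remark following the definition), and use it as the reference $b$ in Lemma~\ref{lm:thetas}. Next, for the fixed $j$, I would argue that among all integer shift vectors $(k_1,\dots,k_n)$ that keep the configuration very simple, the infimum of $\mathrm{diam}\big(\bigcup_\ell (I_\ell + 2\pi k_\ell)\big)$ is attained: the point is that if two components are shifted so far apart that their $j$-th argument intervals are separated by more than $2\pi$, one can translate one of them back toward the others by a multiple of $2\pi$ without destroying very-simplicity (the new position is a $2\pi$-translate of an admissible position) and without increasing the diameter — so the infimum is realized by a configuration in which all the intervals $I_\ell + 2\pi k_\ell$ lie within a window of length at most $2\pi \cdot(\text{something bounded})$, of which there are only finitely many. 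Hence $\min \Lambda_j$ exists and is attained by some integer shift, which corresponds to some $c_j \in \mathbb{T}^d$ (differing from $a$ only in the $j$-th coordinate, by a suitable rotation) with $c_j \cdot E$ very simple and $\mathrm{diam}(\eta_j(\arg(c_j\cdot E))) = \min\Lambda_j$.

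Finally I would assemble the coordinates. Since the $j$-th coordinate of $\arg(c\cdot z)$ depends only on the $j$-th coordinate of $c$, and the constraint "$c\cdot E$ is very simple" is the conjunction over $j$ of "$(c_j)$-rotation of the $j$-th circle avoids $-1$ on $p_j(E)$", the choices in the $d$ coordinates are independent. Take $b \in \mathbb{T}^d$ whose $j$-th coordinate is the $j$-th coordinate of $c_j$ for each $j$; then $b\cdot E$ is very simple, and for every $j$, $\mathrm{diam}(\eta_j(\arg(b\cdot E))) = \mathrm{diam}(\eta_j(\arg(c_j\cdot E))) = \min\Lambda_j$, as desired.

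The main obstacle, and the step deserving the most care, is the attainment claim: showing that the infimum over integer shift configurations is a minimum and is achieved by a \emph{very simple} configuration. One must check both that the set of admissible (very simple) integer shift tuples is nonempty (guaranteed by the reference point $a$), and that one may always "fold back" a far-flung component by a multiple of $2\pi$ to land in a compact window while remaining very simple and not increasing the $j$-th diameter. A clean way to phrase this: if $c\cdot E$ is very simple then so is $c'\cdot E$ whenever $c'$ differs from $c$ by rotating some coordinates by multiples of $2\pi$ — trivially true since such a rotation is the identity on $\mathbb{S}^1$ — wait, that is \emph{not} what we need; rather, the very-simplicity of $c\cdot E$ is a property of $c$ as a point of $\mathbb{T}^d$, i.e. it depends on $c$ only through its image, and the integer shifts $2\pi k_\ell$ of the \emph{lifted} intervals live one level up in $\R$. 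So very-simplicity constrains the real offset $\eta_j(\arg(c\cdot b^{-1}))$, not the integer shifts $k_\ell$; once $c$ is fixed so that $c\cdot E$ is very simple, the $k_\ell$ are \emph{determined}, and varying $c$ among very-simple choices varies both the offset and the $k_\ell$ together. The correct bookkeeping is: enumerate the finitely many very-simple "$\arg$-regimes" for $E$ (one for each admissible relative-shift pattern of the components), compute the $j$-th diameter in each, and take the minimum over this finite set. This finiteness — that there are only finitely many essentially different very-simple positions of $E$ — is what must be established carefully, and it follows from the fact that $E$ has finitely many components and each $\eta_j(\arg(c\cdot E_\ell))$ has length $< 2\pi$ (by very-simplicity of $c \cdot E$), so the relative integer shifts are confined to a bounded, hence finite, set.
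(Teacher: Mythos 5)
Your proof is correct and follows essentially the same route as the paper's: both establish that each $\Lambda_j$ is finite via the observation that, modulo common translation, only finitely many relative phase-shift patterns between components can occur while keeping $\arg(c\cdot E)$ inside $(-\pi,\pi)^d$, and both then obtain a common minimizer $b$ by exploiting that the $j$-th coordinate of $\arg(c\cdot z)$ depends only on the $j$-th coordinate of $c$. Your remark that the very-simplicity constraint decouples into a conjunction of per-coordinate conditions makes the assembly step a bit more transparent than the paper's finite induction with the bijection $\varphi_j$, but it is the same underlying fact.
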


\begin{proof}
We start by proving that the sets $\Lambda_j$ do admit minima. Let $j \in \llbracket 1,d \rrbracket$; since $E$ is simple, $\Lambda_j$ is not empty. We will prove that the set $\Lambda_j$ consists of a finite number of values, which will yield the existence of its minimum.  We now make the following simple observation: if $G = \arg(c \cdot E) \subset (-\pi,\pi]^d$ with $c \cdot E$ very simple is the image of $F = \arg(b \cdot E) \subset (-\pi,\pi]^d$, with $b \cdot E$ very simple, by a translation, then 
$\mathrm{diam}(\eta_j(F)) = \mathrm{diam}(\eta_j(G)).$

Let $\theta_1^{(b,c)}, \ldots \theta_N^{(b,c)}$ be the phase shifts of $E_1, \ldots, E_N$ with respect to $(b,c)$ as introduced in Lemma \ref{lm:thetas}. If $G$ is not the image of $F$ by a translation, then necessarily there exists $i \neq k \in \llbracket 1, N \rrbracket$ such that $\theta_i^{(b,c)} \neq \theta_k^{(b,c)}$. But each $\theta_i^{(b,c)}$ is an element of $(2\pi \Z)^d \cap [-2\pi, 2\pi]^d$, hence we can only get a finite number of different values for $\theta_i^{(b,c)}$ by changing $b$ and $c$. Consequently, there is only a finite number of ways to make $G$ not be the image of $F$ by a translation, hence $\Lambda_j$ is finite.

It remains to prove that there exists a common $b \in \T^d$ minimizing all the $\Lambda_j$, $1 \leq j \leq d$. If $d =1$, this is obvious, thus let us assume that $d \geq 2$. Obviously we can pick some $b \in \T^d$ which is a minimizer for $\Lambda_1$. Now let $j \in \llbracket 1, d-1 \rrbracket$ and assume that we have found $b^j \in \T^d$ such that
\[ \forall i \in \llbracket 1,j \rrbracket, \quad \mathrm{diam}(\eta_i\left(\arg(b^j \cdot E)\right)) = \min \Lambda_i. \]  
Consider the set $C_j \subset \T^d$ of points of the form $(1,\tilde{c}) \cdot b_j$, $\tilde{c} \in \T^{d-j}$. Then clearly, for every $c \in C_j$ with $c \cdot E$ very simple,
\[ \forall i \in \llbracket 1,j \rrbracket, \quad \mathrm{diam}(\eta_i\left(\arg(c \cdot E)\right)) = \min \Lambda_i. \]
Now, let $\Xi_{j+1} = \left\{ \mathrm{diam}(\eta_{j+1}\left(\arg(c \cdot E))\right),c \in C_j, c \cdot E \ \mathrm{very \ simple}  \right\}$. We want to prove that $\min \Xi_{j+1} = \min \Lambda_{j+1}$; this follows from the fact that the map 
\[ \varphi_j: \T^j \times C_j \to \T^d, \quad (a,(1,\tilde{c}) \cdot b_j) \mapsto (a,\tilde{c}) \cdot b_j \]
is a bijection satisfying $\eta_{j+1} (\arg(\varphi_j(a,c) \cdot z)) = \eta_{j+1}(\arg(c \cdot z))$ for every $(a,c) \in \T^j \times C_j$ and $z \in E$. We conclude by (finite) induction.
\end{proof}

We would now like to define the convex hull of a subset $E \subset \T^d$ which is simple, compact, 
and has finitely many connected components, as the set
\[ b^{-1} \cdot \exp \left(i \mathrm{Convex \ Hull}(\arg(b \cdot E)) \right) \]
where $b \in \mathbb{T}^d$ is given by Lemma \ref{lm:diam} and where for any set $F \subset \R^d$, we define $\exp \left(i F \right) = \left\{\exp(i\theta), \ \theta \in F \right\}.$
The problem is that this point $b \in \mathbb{T}^d$ is, in general, far from being unique. Hence, in order to use this definition, we would need this set to not depend on the choice of $b$. But it can depend on this choice if $E$ displays some symmetries; this is why we use the following definition.

\begin{definition}
\label{def:CH_simple}
Let $E \subset \mathbb{T}^d$ be simple, compact, and with finitely many connected components $E_1, \ldots, E_N$. \begin{enumerate}
\item if for every $b, c \in \T^d$ which are admissible for $E$, all the phase shifts $\theta_1^{(b,c)}, \ldots, \theta_N^{(b,c)}$ are equal, then 
\[ \mathrm{Convex \ Hull}_{\mathbb{T}^d}(E) := b^{-1} \cdot \exp \left(i \ \mathrm{Convex \ Hull}(\arg(b \cdot E)) \right)\]
for any $b \in \T^d$ which is admissible for $E$,
\item otherwise, $\mathrm{Convex \ Hull}_{\mathbb{T}^d}(E) := \T^d$.

\end{enumerate}
\end{definition}

\begin{remark}
This definition implies that if $E \subset \T^d$ is simple, compact and connected, its convex hull is simply defined as 
\[ \mathrm{Convex \ Hull}_{\mathbb{T}^d}(E) := b^{-1} \cdot \exp \left(i \ \mathrm{Convex \ Hull}(\arg(b \cdot E)) \right) \]
for any $b \in \T^d$ such that $b \cdot E$ is very simple. 
\end{remark}

Definition \ref{def:CH_simple} makes sense because of the following lemma.

\begin{lemma}
\label{lm:admissible}
Let $b,c \in \T^d$ be two admissible points such that the equality $\theta_1^{(b,c)} =  \ldots = \theta_N^{(b,c)}$ holds. Then 
\[ c^{-1} \cdot \exp \left(i \ \mathrm{Convex \ Hull}(\arg(c \cdot E)) \right) = b^{-1} \cdot \exp \left(i \ \mathrm{Convex \ Hull}(\arg(b \cdot E)) \right). \]
\end{lemma}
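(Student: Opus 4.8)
The plan is to reduce the claimed equality to a statement about convex hulls of subsets of $(-\pi,\pi)^d$ that differ by a single global translation, and then invoke the fact that $\mathrm{Convex\ Hull}$ commutes with translation. Write $F = \arg(b \cdot E)$ and $G = \arg(c \cdot E)$, both contained in $(-\pi,\pi)^d$ since $b \cdot E$ and $c \cdot E$ are very simple. By Lemma \ref{lm:thetas}, for each connected component $E_j$ and each $z \in E_j$ we have $\arg(c \cdot z) = \arg(b \cdot z) + \arg(c \cdot b^{-1}) + \theta_j^{(b,c)}$. The hypothesis is that $\theta_1^{(b,c)} = \cdots = \theta_N^{(b,c)} =: \theta$, so that on all of $E$ we get $\arg(c \cdot z) = \arg(b \cdot z) + v$ where $v := \arg(c \cdot b^{-1}) + \theta \in \R^d$ is a single vector independent of $z$. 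Hence $G = F + v$ as subsets of $\R^d$.

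Next I would use that taking the (ordinary) convex hull in $\R^d$ commutes with translation: $\mathrm{Convex\ Hull}(G) = \mathrm{Convex\ Hull}(F + v) = \mathrm{Convex\ Hull}(F) + v$. Applying $\exp(i \cdot)$ componentwise turns this into a multiplicative translation in $\T^d$: for any set $H \subset \R^d$ and any $v \in \R^d$, $\exp(i(H+v)) = \exp(iv) \cdot \exp(iH)$, where $\exp(iv) \in \T^d$. Therefore
\[ \exp\!\left(i\,\mathrm{Convex\ Hull}(\arg(c \cdot E))\right) = \exp(iv) \cdot \exp\!\left(i\,\mathrm{Convex\ Hull}(\arg(b \cdot E))\right). \]
Now multiply both sides on the left by $c^{-1}$. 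Since $\exp(iv) = \exp\!\left(i\arg(c\cdot b^{-1})\right)\cdot \exp(i\theta)$ and $\exp(i\theta) = (1,\ldots,1)$ because $\theta \in (2\pi\Z)^d$, we have $\exp(iv) = c \cdot b^{-1}$ in $\T^d$. Hence $c^{-1} \cdot \exp(iv) = b^{-1}$, and the right-hand side becomes exactly $b^{-1} \cdot \exp\!\left(i\,\mathrm{Convex\ Hull}(\arg(b \cdot E))\right)$, which is the desired identity.

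The only genuinely non-routine point is the identification $\exp(iv) = c \cdot b^{-1}$: one must be careful that $\arg(c \cdot b^{-1})$ is the argument of the torus element $c \cdot b^{-1}$ taken componentwise in $(-\pi,\pi]^d$, so that $\exp\!\left(i\arg(c\cdot b^{-1})\right) = c\cdot b^{-1}$ holds identically, and that the integer ambiguity $\theta$ disappears under $\exp(i\cdot)$. Everything else — commutation of convex hull with translation in $\R^d$, and the homomorphism property of componentwise $\exp$ from $(\R^d,+)$ to $(\T^d,\cdot)$ — is standard. I expect no real obstacle here; the lemma is essentially bookkeeping that makes Definition \ref{def:CH_simple}(1) independent of the admissible point chosen.
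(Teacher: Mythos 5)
Your proof is correct and follows essentially the same route as the paper: translate $\arg(b\cdot E)$ to $\arg(c\cdot E)$ by the single vector $\arg(c\cdot b^{-1})+\theta$ using Lemma~\ref{lm:thetas} and the hypothesis that all phase shifts agree, note that convex hull in $\R^d$ commutes with translation, then apply $\exp(i\,\cdot)$ and use $\theta\in(2\pi\Z)^d$ to identify $\exp(iv)$ with $c\cdot b^{-1}$. The only difference is cosmetic (you name the vector $v$ and spell out the bookkeeping a little more explicitly), so there is nothing to add.
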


\begin{proof}
Because of the assumption, we have that for every $z \in E$, the equality $\arg(c \cdot z) =  \arg(b \cdot z) + \arg(c \cdot b^{-1}) + \theta$ holds,
where $\theta$ is the common value of the $\theta_j^{(b,c)}$. Hence 
\[ \mathrm{Convex \ Hull}(\arg(c \cdot E)) = \arg(c \cdot b^{-1}) + \theta + \mathrm{Convex \ Hull}(\arg(b \cdot E)) \]
which implies, since $\theta$ belongs to $(2 \pi \Z)^d$, that
\[ \exp \left(i \ \mathrm{Convex \ Hull}(\arg(c \cdot E)) \right) = c \cdot b^{-1} \cdot \exp \left(i \ \mathrm{Convex \ Hull}(\arg(b \cdot E)) \right) \]
and the result follows.
\end{proof}

\begin{definition}
A simple compact subset $E \subset \T^d$ with finitely many connected components and satisfying the first condition in the above definition will be called \emph{generic}. 
\end{definition}

This terminology makes sense because such sets are, indeed, generic in the following sense.

\begin{lemma}
\label{lm:generic}
Let $E \subset \mathbb{T}^d$ be simple, compact, with finitely many connected components, and such that there exists $b, c \in \T^d$ admissible such that not all the phase shifts $\theta_j^{(b,c)}$ are equal. Then there exists $\varepsilon_0 > 0$ such that for every $\varepsilon \leq \varepsilon_0$, there exists a compact simple subset $E_{\varepsilon} \subset \T^d$ such that $E_{\varepsilon}$ is generic and $d_H^{\T^d}(E,E_{\varepsilon}) \leq \varepsilon$.
\end{lemma}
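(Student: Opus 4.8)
The goal is to show that a non-generic $E$ can be perturbed (in Hausdorff distance) to a generic $E_\varepsilon$, and the perturbation can be taken arbitrarily small. Here is the plan.

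\medskip

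\emph{Setup and strategy.} The obstruction to genericity is that there are admissible points $b,c$ for which the phase shifts $\theta_1^{(b,c)},\ldots,\theta_N^{(b,c)}$ are not all equal. By Lemma~\ref{lm:diam} and the finiteness argument in its proof, there are only finitely many ``admissible translations'' up to the ambiguity that matters (the phase shifts live in $(2\pi\Z)^d \cap [-2\pi,2\pi]^d$, a finite set, and the diameters $\Lambda_j$ take finitely many values). Fix once and for all an admissible $b$; translating by $b$, we may assume $E$ itself is very simple (Lemma~\ref{lm:trans_haus} shows Hausdorff distance is translation-invariant, and genericity is clearly a translation-invariant notion, so it suffices to produce the perturbation for $b\cdot E$ and translate back). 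So from now on assume $E \subset (\mathbb{S}^1\setminus\{-1\})^d$ is very simple and write $F = \arg(E) \subset (-\pi,\pi)^d$, a compact set with finitely many connected components $F_1,\ldots,F_N$.

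\medskip

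\emph{Constructing the perturbation.} The plan is to nudge the connected components apart so that the symmetry which forces distinct phase shifts to appear (for some competing admissible translation) is broken, while keeping the minimizing property of $b$ intact for the perturbed set. Concretely, I would pick a generic vector $v \in \R^d$ of small norm and translate the $N$ components by multiples $v, 2v, \ldots$ of it — i.e., replace $F_k$ by $F_k + k\,\varepsilon' v$ for a small parameter $\varepsilon'$ proportional to $\varepsilon$ — or, if that risks increasing some diameter $\mathrm{diam}(\eta_j(F))$ past the minimum, instead shrink each component slightly toward its barycenter and then apply such a spreading translation. After exponentiating, $E_\varepsilon := \exp(i F_\varepsilon)$ is still very simple for $\varepsilon$ small (the components stay away from $-1$ by compactness), still has $N$ connected components, and $d_H^{\T^d}(E,E_\varepsilon) \leq C\varepsilon$ by construction. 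The point of taking $\varepsilon' \ll \varepsilon_0$ with $\varepsilon_0$ chosen from the finite data of $E$: for such small perturbations, the set $\Lambda_j$ of achievable diameters and the identity of minimizers cannot change abruptly, so $b$ (now the identity) remains admissible for $E_\varepsilon$; one checks that any admissible translation for $E_\varepsilon$ is $\varepsilon$-close to one for $E$, and the corresponding phase shifts $\theta_k^{(\cdot,\cdot)}$, being integer-vector-valued and locally constant in the translation parameter, agree with those of $E$. Thus the only admissible translations to worry about are (near) the finitely many of $E$, and for each competing one the generic spreading vector $v$ makes the previously-coincident phase-shift configuration impossible, i.e., every pair of admissible points for $E_\varepsilon$ now yields all equal phase shifts — so $E_\varepsilon$ is generic.

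\medskip

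\emph{Main obstacle.} The delicate point is the interplay between admissibility and the perturbation: we must ensure that our nudge does not create \emph{new} admissible translations with bad phase shifts, and simultaneously that it does not destroy admissibility of $b$ — both of which are controlled by the diameters $\mathrm{diam}(\eta_j(\arg(c\cdot E)))$ varying continuously (and, across different phase-shift regimes, by finitely many values) in $c$. I would make this rigorous by a compactness/finiteness argument: the function $c \mapsto (\mathrm{diam}(\eta_j(\arg(c\cdot E))))_{j}$ is, on the (relatively compact) set of $c$ with $c\cdot E$ very simple, piecewise of the form ``max of finitely many affine functions of $\arg c$ shifted by integer phase shifts,'' hence its minimizers over the relevant region are stable under $C^0$-small perturbations of $E$; choosing $\varepsilon_0$ smaller than the relevant ``gaps'' in this finite picture guarantees that admissibility is preserved and no new admissible configurations appear. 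The spreading direction $v$ is then chosen outside the finitely many affine hyperplanes in $\R^d$ that would keep some pair of components' relative position symmetric — a standard Baire/measure-zero genericity choice — which kills the non-equal-phase-shift phenomenon for every competing admissible translation at once.
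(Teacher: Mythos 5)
Your overall strategy (perturb within the allowed Hausdorff distance to destroy the symmetry that makes phase shifts disagree, using a finiteness argument to control admissibility) is the right one, and it is indeed what the paper does. But the specific perturbation you propose and the finiteness argument you invoke are both weaker than what is needed, and you are missing the two lemmas the paper leans on.

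First, the paper does not perturb components as rigid bodies. It first deforms $E$ to an $\varepsilon$-close $\tilde{E}_\varepsilon$ so that, on each component $E_p$ and in each coordinate $j$, the extremal values $\min_{z\in E_p}\eta_j(\arg(b\cdot z))$ and $\max_{z\in E_p}\eta_j(\arg(b\cdot z))$ are attained at \emph{unique} points $z_{j,p}^{\mp}$ (this is the hypothesis of Lemma~\ref{lm:unique_proj}; see Figure~\ref{fig:lemma24}). It then collects these into a finite set $F_\varepsilon$ and uses the observation, stated at the end of Lemma~\ref{lm:unique_proj}, that $\mathrm{diam}(\eta_j(\arg(b\cdot E))) = \mathrm{diam}(\eta_j(\arg(b\cdot F_\varepsilon)))$ for every $b$ making $b\cdot E$ very simple. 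This reduction to a finite set of extremal points is the structural step that makes the ``finitely many hyperplanes'' idea precise — you assert a piecewise-affine structure in $c$ but never reduce the problem to a finite configuration, so the claim that the bad locus is a finite union of affine conditions is not substantiated by anything in your sketch.

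Second, your perturbation $F_k \mapsto F_k + k\varepsilon' v$ only changes the \emph{relative} positions of different components. It leaves the intra-component geometry, and in particular the intra-component extremal gaps $\eta_j(\arg z_{j,p}^+) - \eta_j(\arg z_{j,p}^-)$, unchanged. The paper instead perturbs the components \emph{locally near the individual points $z_{i,j}^\pm(\varepsilon)$}, which allows one to move the extremal points of a single component independently of one another (and independently across coordinates $j$). This extra freedom is needed precisely because the obstruction identified by Lemma~\ref{lm:bad_sets} is a family of equalities among quantities $\eta_j(\arg(b\cdot z_p)) - \eta_j(\arg(b\cdot z_q))$, some of which can involve two extremal points of the same component, which a rigid translation cannot disturb. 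Without Lemma~\ref{lm:bad_sets} you also have no explicit list of equalities to break, so the ``generic $v$ avoids finitely many hyperplanes'' step is not grounded. The ``shrink toward barycenter then spread'' fallback you mention is closer in spirit, but again does not give independent control of the different extremal points of a single component in different coordinates.

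In short: the high-level plan is right, but you need (i) the regularization to unique extremal projections from Lemma~\ref{lm:unique_proj}, (ii) the diameter identity that reduces everything to the finite set $F_\varepsilon$, and (iii) the explicit algebraic characterization of the bad locus from Lemma~\ref{lm:bad_sets}; with those in hand the correct perturbation is local around each $z_{i,j}^\pm(\varepsilon)$, not a rigid component-by-component translation.
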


This result, of which we will give a proof later, is a corollary of the next two lemmas. The first one gives a necessary condition for a set to be non generic.

\begin{lemma}
\label{lm:bad_sets}
Let $E = \{ z_1, \ldots, z_N \} \subset \mathbb{T}^d$ be such that there exists $b, c \in \T^d$ admissible such that not all the phase shifts $\theta_j^{(b,c)}$ are equal. Then 
\begin{enumerate}
\item either there exist $p,q,r,s \in \llbracket 1, N \rrbracket$, $j \in \llbracket 1,d \rrbracket$ and $\theta \in 2 \pi \Z$ such that $\{p,q\} \neq \{r,s\}$ and 
\[ \left| \eta_j\left(\arg(b \cdot z_r) \right) - \eta_j\left(\arg(b \cdot z_s)\right) + \theta \right| = \left| \eta_j\left(\arg(b \cdot z_p)\right) - \eta_j\left(\arg(b \cdot z_q)\right) \right|,  \]
\item or there exist $p,q \in \llbracket 1, N \rrbracket$ and $j \in \llbracket 1,d \rrbracket$ such that 
\[ \left| \eta_j\left(\arg(b \cdot z_p)\right) - \eta_j\left(\arg(b \cdot z_q)\right) \right| = \pi. \]
\end{enumerate}
\end{lemma}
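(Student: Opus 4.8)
The plan is to trace through how non-genericity arises, working from the definition of admissible points in Lemma \ref{lm:diam}. Since $E = \{z_1, \ldots, z_N\}$ is finite, every point $a \in \T^d$ with $a \cdot E$ very simple is automatically admissible if and only if it minimizes each $\mathrm{diam}(\eta_j(\arg(\cdot \cdot E)))$; and by Lemma \ref{lm:thetas}, each connected component $E_j = \{z_j\}$ is a single point, so the phase shifts $\theta_j^{(b,c)}$ can all be read off componentwise. First I would fix $b, c \in \T^d$ admissible with not all $\theta_j^{(b,c)}$ equal, say $\theta_p^{(b,c)} \neq \theta_q^{(b,c)}$ for some $p, q \in \llbracket 1, N \rrbracket$. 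The strategy is to argue by contradiction: suppose neither alternative (1) nor (2) of the lemma holds, i.e., all the pairwise differences $|\eta_j(\arg(b \cdot z_r)) - \eta_j(\arg(b \cdot z_s))|$, as $\{r,s\}$ ranges over pairs and we allow a $2\pi\Z$-shift, are pairwise distinct (that is negation of (1)) and none of them equals $\pi$ (negation of (2)); then I would show that the only way to move $b \cdot E$ by a translation-plus-rephasing to another very simple configuration that remains admissible is by a genuine translation, which forces all phase shifts equal — the desired contradiction.

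The key steps, in order, are as follows. (i) Reduce to a single coordinate $j$: since admissibility is imposed coordinatewise in Lemma \ref{lm:diam} and the phase shifts live in $(2\pi\Z)^d$, two admissible points have non-equal phase shifts iff they differ in some coordinate, so I may work with $d = 1$ notation and recover $j$ at the end. (ii) Describe the admissible configurations in that coordinate: a point $c$ with $c \cdot E$ very simple replaces the multiset $\{\arg(b \cdot z_k) + \arg(c \cdot b^{-1}) + \theta_k\}$ inside $(-\pi, \pi)$, and admissibility means this multiset has minimal diameter over all legal choices of the integer shifts $\theta_k$ and the continuous rotation $\arg(c \cdot b^{-1})$. (iii) Observe that the diameter of a set of points on the circle, lifted to minimal-diameter position, is realized by an extremal pair; if $c$ is admissible but $\theta^{(b,c)}$ is not constant, then passing from $b$'s configuration to $c$'s configuration "wraps" at least one point around, which can only preserve the minimal diameter if the new extremal pair has the \emph{same} gap length as the old one — this is exactly alternative (1) (two distinct pairs realizing equal gaps, up to $2\pi\Z$). (iv) Handle the boundary case: wrapping a single point from just below $\pi$ to just above $-\pi$ preserves diameter precisely when the configuration is symmetric about the antipode, which forces some gap to equal $\pi$ — alternative (2). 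Assembling (iii) and (iv) gives that non-genericity implies (1) or (2).

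The main obstacle I expect is step (iii): carefully characterizing when two different integer-shift assignments $(\theta_k)$ both yield the minimal diameter. One has to rule out the "trivial" non-equality where the $\theta_k$ differ but $b \cdot E$ and $c \cdot E$ are still translates of each other (this was already noted in the proof of Lemma \ref{lm:diam}), and isolate the genuinely distinct extremal configurations. The clean way is to note that the minimal-diameter lift of a finite subset of $\mathbb{S}^1$ is determined by choosing which of the $N$ "arcs" between consecutive points is the complementary gap; two such choices give the same diameter iff the corresponding two complementary arcs have equal length, and that equality, written out in terms of the $\eta_j(\arg(b \cdot z_k))$, is precisely the identity in (1) — unless the complementary arc has length $\geq \pi$, in which case one is forced into the $= \pi$ borderline of (2). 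I would present this arc-combinatorics argument explicitly for $d = 1$ and then lift it coordinatewise, which is routine given Lemmas \ref{lm:thetas} and \ref{lm:diam}.
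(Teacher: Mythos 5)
Your proposal is correct and follows essentially the same route as the paper's proof: after normalizing $b=1$, fix a coordinate $j$, compare the extremal pair realizing the diameter of $\arg(E)$ with that of $\arg(c\cdot E)$, and distinguish whether they coincide or not; coincidence with differing shifts gives the $|d|=\pi$ case, coincidence with equal shifts forces all shifts in coordinate $j$ to agree, and the fact that not all $\theta_\ell$ are equal means this last alternative cannot occur for every $j$. One small imprecision: step~(iii) asserts that a change in configuration with non-constant shifts yields alternative~(1), but that conclusion requires the new extremal pair to be \emph{distinct} from the old one (if it is the same pair, one lands in alternative~(2) or the exceptional case instead); you address this separately in step~(iv) and the ``obstacle'' paragraph, so the argument as a whole is sound, but stating the dichotomy explicitly at step~(iii), as the paper does, would tighten the exposition.
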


It would be interesting to give a simpler characterization of non generic sets. An example of non generic set when $d = 1$ is $E$ consisting of a finite number of points uniformly distributed on $\mathbb{S}^1$, but there are also sets with weaker symmetries which are not generic, for example 
\[ E = \{ \exp(i \phi_1), \exp(i \phi_2), \exp(i \phi_3) \} \subset \mathbb{S}^1 \]
where $\phi_3 = \pi + (\phi_1 + \phi_2)/2$ (see Figure \ref{fig:nongen}).

\begin{figure}[h]
\subfigure[$\left\{ \exp\left(\frac{i \ell \pi}{2}\right) \right\}, 0 \leq \ell \leq 3$]{\includegraphics{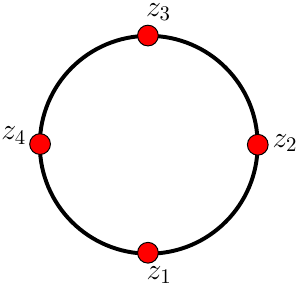}}
\hspace{1cm}
\subfigure[$ \left\{ \exp\left(\frac{-i \pi}{2}\right), 1, \exp\left(\frac{3i \pi}{4}\right) \right\}$ ]{\includegraphics{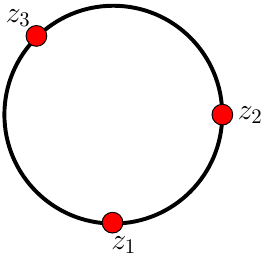}}
\caption{Two examples of non generic subsets of $\mathbb{S}^1$.}
\label{fig:nongen}
\end{figure}

\begin{proof}
Firstly, note that the existence of the pair $(b,c)$ satisfying the assumptions of the lemma implies that $N > 1$. Moreover, replacing $E$ by $b \cdot E$ and $c$ by $c \cdot b^{-1}$ if necessary, we can assume that $b=1$. To simplify the notation, we will set $\theta_{\ell} := \theta_{\ell}^{(1,c)}$, $1 \leq \ell \leq N$.\\

Let us start with some considerations for fixed $j \in \llbracket 1,d \rrbracket$. Let $p,q \in \llbracket 1,N \rrbracket$ be such that 
$\mathrm{diam}\left(\eta_j(\arg(E))\right) = \left| \eta_j(\arg(z_p)) - \eta_j(\arg(z_q)) \right|.$
If there exist $r,s \in \llbracket 1,N \rrbracket$ with $\{p,q\} \neq \{r,s\}$ such that this diameter is also equal to $\left| \eta_j(\arg(z_r)) - \eta_j(\arg(z_s)) \right|$, then we are done, because the property $(1)$ in the statement is satisfied with $\theta = 0$. So from now on we assume that it is not the case. We choose indices $r,s \in \llbracket 1,N \rrbracket$ such that 
\[ \mathrm{diam}\left(\eta_j(\arg(c \cdot E))\right) = \left| \eta_j(\arg(c \cdot z_r)) - \eta_j(\arg(c \cdot z_s)) \right|. \]
Since $1$ and $c$ are admissible, the equality
\[ \left| \eta_j(\arg(z_p)) - \eta_j(\arg(z_q)) \right| = \left| \eta_j(\arg(c \cdot z_r)) - \eta_j(\arg(c \cdot z_s)) \right| \]
holds; it can be rewritten as
\[ \left| \eta_j(\arg(z_p)) - \eta_j(\arg(z_q)) \right| = \left| \eta_j(\arg(z_r)) - \eta_j(\arg(z_s)) + \eta_j(\theta_r) - \eta_j(\theta_s) \right|.\]
If $\{p,q\} \neq \{r,s\}$, then we are done again, because property $(1)$ holds with $\theta = \eta_j(\theta_r) - \eta_j(\theta_s)$. If $\{p,q\} = \{r,s\}$ and $\eta_j(\theta_p) \neq \eta_j(\theta_q)$, then we are also done. Indeed, this means that 
\[ \left| \eta_j(\arg(z_p)) - \eta_j(\arg(z_q)) \right| = \left| \eta_j(\arg(z_p)) - \eta_j(\arg(z_q)) + \theta \right|,\]
where $\theta = \pm 2\pi$. Assuming for instance that $\eta_j(\arg(z_p)) > \eta_j(\arg(z_q))$, this yields 
$2(\eta_j(\arg(z_p)) - \eta_j(\arg(z_q))) = \pm 2 \pi.$

Therefore, let us consider the case where $\{p,q\} = \{r,s\}$ and $\eta_j(\theta_p) = \eta_j(\theta_q) = \mu;$ we will call this case the \emph{exceptional case}. Exchanging the roles of $p$ and $q$ if necessary, we can assume that $\eta_j(\arg(z_p)) > \eta_j(\arg(z_q))$. Then for every $\ell \notin \{p,q\}$, we have that 
\begin{equation} \eta_j(\arg(z_q)) < \eta_j(\arg(z_{\ell})) < \eta_j(\arg(z_p)). \label{eq:ineq}\end{equation}
But we also know that $\eta_j(\arg(c \cdot z_p)) > \eta_j(\arg(c \cdot z_q))$, because 
\[ \eta_j(\arg(c \cdot z_p)) = \eta_j(\arg(z_p)) + \eta_j(\arg(c)) + \mu \]
and
\[ \eta_j(\arg(c \cdot z_q)) = \eta_j(\arg(z_q)) + \eta_j(\arg(c)) + \mu.\]
Therefore, we also have for every $\ell \notin \{p,q\}$ the inequality 
\[ \eta_j(\arg(c \cdot z_q)) < \eta_j(\arg(c \cdot z_{\ell})) < \eta_j(\arg(c \cdot z_p)),\]
which implies that  
\[ \eta_j(\arg(z_q)) + \mu < \eta_j(\arg(z_{\ell})) + \eta_j(\theta_{\ell}) < \eta_j(\arg(c \cdot z_p)) + \mu. \]
Combining this with inequality (\ref{eq:ineq}), we get that for every $\ell \in \llbracket 1,N \rrbracket$, the equality $\eta_j(\theta_{\ell}) = \mu$ holds.\\

Let us sum up the situation. If for some $j \in \llbracket 1,d \rrbracket$, we are not in the exceptional case, then we are done. But there must exist such a $j$, because otherwise we would have that 
$\eta_j(\theta_1) = \ldots = \eta_j(\theta_N)$
for every $j$, that is to say $\theta_1 = \ldots = \theta_N$.
\end{proof} 

\begin{lemma}
\label{lm:unique_proj}
Let $E$ be a compact simple subset of $\T^d$, with finitely many connected components $E_1, \ldots, E_N$. \ Assume that for every $j \in \llbracket 1, d \rrbracket$ and $p \in \llbracket 1, N \rrbracket$, there exists a unique point $z_{j,p}^-$ (respectively $z_{j,p}^+$) such that for every $b \in \T^d$ with $b \cdot E$ very simple,
\[ \min_{z \in E_p} \eta_j(\arg(b \cdot z)) = \eta_j(\arg(b \cdot z_{j,p}^-)) \]
(respectively $\max_{z \in E_p} \eta_j(\arg(b \cdot z)) = \eta_j(\arg(b \cdot z_{j,p}^+))$). If $E$ satisfies the assumptions of Lemma \ref{lm:generic}, then the set 
$F = \{ z_{1,1}^-, z_{1,1}^+, \ldots, z_{d,N}^-, z_{d,N}^+ \}$
satisfies the assumptions of Lemma \ref{lm:bad_sets}.
\end{lemma}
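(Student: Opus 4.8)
The plan is to reduce the statement about the original set $E$ (with finitely many connected components) to a statement about the finite set $F$ of extremal points, and then verify directly that the alternative offered by Lemma \ref{lm:bad_sets} transfers from $E$ to $F$. The first step is to observe that, for each $j$ and each component $E_p$, the diameter $\operatorname{diam}(\eta_j(\arg(b\cdot E_p)))$ is realized precisely by the pair $z_{j,p}^-,z_{j,p}^+$ for every $b$ with $b\cdot E$ very simple; more importantly, $\operatorname{diam}(\eta_j(\arg(b\cdot E)))$ is realized by a pair among $\{z_{j,p}^\pm : 1\le p\le N\}\subset F$. Indeed, since $E$ has finitely many components and each component's $\eta_j\circ\arg$-image is an interval (as $b\cdot E$ very simple and $E_p$ connected, via the argument in Lemma \ref{lm:thetas}), the maximum and minimum of $\eta_j(\arg(b\cdot E))$ are attained at some $z_{j,p}^+$ and $z_{j,q}^-$. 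Consequently, for any $b,c$ with $b\cdot E$ and $c\cdot E$ very simple, the admissibility condition $\operatorname{diam}(\eta_j(\arg(b\cdot E)))=\operatorname{diam}(\eta_j(\arg(c\cdot E)))$ holds if and only if the analogous equality holds with $E$ replaced by $F$; hence a point $b$ is admissible for $E$ exactly when it is admissible for $F$.

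Next I would relate the phase shifts. By Lemma \ref{lm:thetas}, for admissible $b,c$ and $z\in E_p$ we have $\arg(c\cdot z)=\arg(b\cdot z)+\arg(c\cdot b^{-1})+\theta_p^{(b,c)}$; in particular the phase shift is the same for the extremal points $z_{j,p}^-,z_{j,p}^+$ lying in $E_p$ as for any other point of $E_p$. When we regard $F=\{z_{1,1}^-,z_{1,1}^+,\dots,z_{d,N}^-,z_{d,N}^+\}$ as a finite set, each of its points forms its own connected component, so $F$ has some number $N'\le 2dN$ of components, and the phase shift attached to the component $\{z_{j,p}^\pm\}$ of $F$ with respect to $(b,c)$ equals $\theta_p^{(b,c)}$ (the phase shift of $E_p$), because the defining relation $\arg(c\cdot z)=\arg(b\cdot z)+\arg(c\cdot b^{-1})+\theta$ is pointwise and does not see the difference between $E$ and $F$. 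Therefore the hypothesis of Lemma \ref{lm:generic} applied to $E$ — namely the existence of admissible $b,c$ with not all $\theta_p^{(b,c)}$ equal — gives admissible (for $F$) $b,c$ with not all phase shifts of $F$ equal, which is exactly the hypothesis of Lemma \ref{lm:bad_sets} for the finite set $F$.

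Finally, since $F$ is finite, Lemma \ref{lm:bad_sets} applies verbatim to $F$ and yields the desired dichotomy, which is what we wanted to conclude. The main obstacle, and the point that needs to be handled carefully, is the first step: one must be sure that the extrema of $\eta_j(\arg(b\cdot E))$ over all of $E$ are attained at the prescribed points $z_{j,p}^\pm$ uniformly in $b$ — this is exactly the uniqueness hypothesis in the statement — and that passing from $E$ to the finite set $F$ does not change which points $b$ are admissible nor alter the phase shifts. Once the "admissible for $E$ $\iff$ admissible for $F$" equivalence and the equality of phase shifts are in place, invoking Lemma \ref{lm:bad_sets} is immediate. A minor subtlety worth spelling out is that a point of $F$ could a priori be repeated if $z_{j,p}^+=z_{j',p'}^-$ for different index pairs; this causes no harm since it only merges components of $F$ and the argument above is insensitive to such coincidences.
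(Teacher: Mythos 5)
Your argument hinges on the claim that $b$ is admissible for $E$ if and only if $b$ is admissible for $F$, deduced from the diameter equality $\mathrm{diam}(\eta_j(\arg(b\cdot E)))=\mathrm{diam}(\eta_j(\arg(b\cdot F)))$; this inference is the gap. Admissibility for a set $S$ is defined (Lemma~\ref{lm:diam}) by minimizing the diameters over \emph{all} $c$ with $c\cdot S$ very simple. Since $F\subset E$, there are strictly more $c$ with $c\cdot F$ very simple than with $c\cdot E$ very simple: one may place $-c_j^{-1}$ in the interior of a long component $E_p$, which is forbidden for $E$ but permitted for $F$. For such $c$ the quantity $\mathrm{diam}(\eta_j(\arg(c\cdot F)))$ is not controlled by the observation, which only applies when $c\cdot E$ itself is very simple, and it can fall strictly below the minimum attainable for $E$. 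Thus the minimum defining admissibility can drop when passing from $E$ to $F$, and the admissible points for $E$ and for $F$ need not coincide; they can in fact be disjoint. For instance with $d=1$, take $E$ to be three arcs of angular lengths roughly $3$, $1$ and $0.27$, separated by two gaps of length $1$ and one very short gap: the maximal gap of $E$ has length $1$ and there are two of them (so admissible $b,c$ for $E$ with unequal phase shifts exist), whereas the maximal gap of $F$ is the interval of length $3$ sitting inside the long arc, so the admissible points for $F$ lie in a completely different region, and every admissible pair for $F$ then yields equal (zero) phase shifts. In that situation your reduction does not deliver the hypothesis of Lemma~\ref{lm:bad_sets} for $F$.

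The paper's proof consists of a single sentence asserting the diameter equality and that the lemma ``can be deduced from'' it, without spelling out the deduction; it is therefore not clear whether the authors have a step addressing the change in the admissible locus when $E$ is replaced by $F$, or whether the same difficulty is present there. In any case, the reasoning you have written out does fail at the step indicated: you would need to justify separately why the phase-shift discrepancy for $E$ survives for pairs admissible with respect to $F$, and the diameter observation alone does not do that.
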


\begin{proof}
The result can be deduced from the following observation: for every $j \in \llbracket 1, d \rrbracket$ and $p \in \llbracket 1, N \rrbracket$, and for every $b \in \T^d$ such that $b \cdot E$ is very simple,
$\mathrm{diam}(\eta_j(\arg(b \cdot E))) = \mathrm{diam}(\eta_j(\arg(b \cdot F))).$
\end{proof}

\begin{proof}[Proof of Lemma \ref{lm:generic}]
Firstly, we can slightly modify the connected components of $E$ in order to get an $\varepsilon$-close set $\tilde{E}_{\varepsilon}$ satisfying the assumption in the previous lemma (see Figure \ref{fig:lemma24}), because if this assumption is true for one $b$ such that $b \cdot E$ is very simple, it is true for all such $b$. To this new set $\tilde{E}_{\varepsilon}$, we then associate a set 
$F_{\varepsilon} = \{  z_{1,1}^-(\varepsilon), z_{1,1}^+(\varepsilon), \ldots, z_{d,N}^-(\varepsilon), z_{d,N}^+(\varepsilon)\}$
as in this lemma. Then equalities as in Lemma \ref{lm:bad_sets} occur for $F_{\varepsilon}$ for a certain number of couples $(b,c)$ of admissible points. But recall that there is only a finite number of different values of 
$(\theta_1^{(b,c)}, \ldots, \theta_N^{(b,c)})$
that we can obtain by changing $(b,c)$. Hence, given $\varepsilon > 0$ small enough, by performing small perturbations of the connected components of $\tilde{E}_{\varepsilon}$ around the points 
$z_{i,j}^{\pm}(\varepsilon)$ of $F_{\varepsilon}$, we can construct a set $E_{\varepsilon}$ which is $\varepsilon$-close to $\tilde{E}_{\varepsilon}$ with respect to the Hausdorff distance and such that no equality as in Lemma \ref{lm:bad_sets} ever occurs, which means that $E_{\varepsilon}$ is generic.
\end{proof}

\begin{figure}[h]
\includegraphics{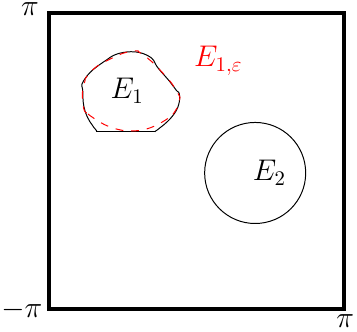}
\caption{Approximating $E$ by a set satisfying the assumptions of Lemma \ref{lm:unique_proj}.}
\label{fig:lemma24}
\end{figure}

Before concluding this section, we note that the convex hull thus constructed is compatible with rotations, 
in the sense that if $E \subset \T^d$ is as in Definition \ref{def:CH_simple}, then
$\mathrm{Convex \ Hull}_{\mathbb{T}^d}(b \cdot E) = b \cdot \mathrm{Convex \ Hull}_{\mathbb{T}^d}(E)$
for every $b \in \T^d$. Indeed, if $c \in \T^d$ is admissible for $c \cdot E$, then $c \cdot b^{-1}$ is admissible for $b \cdot E$, and, if $C = \mathrm{Convex \ Hull}_{\mathbb{T}^d}(b \cdot E)$, then 
\[ C  = (c \cdot b^{-1})^{-1} \cdot \exp \left(i \ \mathrm{Convex \ Hull}(\arg(c \cdot b^{-1} \cdot b \cdot E)) \right),\]
which yields
\[ C = b \cdot c^{-1} \cdot \exp \left(i \ \mathrm{Convex \ Hull}(\arg(c  \cdot E)) \right) = b \cdot \mathrm{Convex \ Hull}_{\mathbb{T}^d}(b \cdot E).\]

\begin{remark}
We will not give a definition of the convex hull of a general subset of the torus, since we will always keep these assumptions of compactness and finite number of connected components. However, our definition allows us to handle, in particular, compact connected subsets and sets consisting of a finite number of points. Back to our initial problem, the former corresponds to the closed image of a joint principal symbol, while the latter corresponds to the joint spectrum of a family of pairwise commuting operators acting on finite-dimensional spaces. Moreover, computing the convex hull of a finite number of points on tori seems to be of interest in computational geometry \cite{GriMar}. 
\end{remark}

\subsection*{Convex hull for compact, connected subsets of $\T^d$}

We turn to the definition of the convex hull for compact connected non necessarily simple subsets.
\begin{lemma}
\label{lm:cv_simple}
Let $E$ be a compact connected subset of $\mathbb{T}^d$. Assume that there exists a sequence $(E_n)_{n \geq 1}$ of compact connected very simple subsets such that 
\begin{enumerate} 
\item $E_n \underset{n \to \infty}{\longrightarrow} E$ with respect to the Hausdorff distance,
\item $E_{n} \subset E_{n+1}$,
\item $d_H^{\T^d}(E_n,E_{n+1}) \leq  \frac{1}{2^n} \min  \left(1, d\left(\arg(E_n),\partial\left([-\pi,\pi]^d \right) \right) \right)$.
\end{enumerate}
We call such a sequence a \emph{very simple approximation of $E$}. Then there exists a compact subset $C \subset \mathbb{T}^d$ such that the sequence $(C_n)_{n \geq 1}$ of subsets of $\T^d$ defined by 
$C_n = \mathrm{Convex \ Hull}_{\mathbb{T}^d}(E_n)$
converges to $C$ for the Hausdorff distance topology.
\end{lemma}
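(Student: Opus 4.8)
The strategy is to show that $(C_n)_{n\ge 1}$ is a Cauchy sequence in the complete metric space of nonempty compact subsets of $\T^d$ equipped with $d_H^{\T^d}$, so that it converges to some compact $C$. The key point is to bound $d_H^{\T^d}(C_n, C_{n+1})$ in terms of $d_H^{\T^d}(E_n, E_{n+1})$, using the explicit description of the convex hull of a very simple connected set. Recall that for a compact connected very simple set $E_n$, by the remark following Definition \ref{def:CH_simple}, we have $C_n = b_n^{-1}\cdot\exp(i\,\mathrm{Convex\ Hull}(\arg(b_n\cdot E_n)))$ for \emph{any} $b_n$ with $b_n\cdot E_n$ very simple; since $E_n$ is itself very simple we may simply take $b_n = 1$, so $C_n = \exp(i\,\mathrm{Convex\ Hull}(\arg(E_n)))$, and likewise $C_{n+1} = \exp(i\,\mathrm{Convex\ Hull}(\arg(E_{n+1})))$, provided $\arg$ is the continuous branch into $(-\pi,\pi)^d$ on both sets simultaneously — which is exactly what condition (3) is designed to guarantee.

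First I would verify that $\arg$ restricted to $E_{n+1}$ (hence to $E_n\subset E_{n+1}$) is a single well-defined continuous map into $(-\pi,\pi)^d$: since $E_{n+1}$ is very simple its image avoids $-1$ in each coordinate, and connectedness pins down a continuous branch. Then, because $E_n\subset E_{n+1}$ and both use the same branch of $\arg$, we get $\arg(E_n)\subset\arg(E_{n+1})$ in $\R^d$, hence $\mathrm{Convex\ Hull}(\arg(E_n))\subset\mathrm{Convex\ Hull}(\arg(E_{n+1}))$ and therefore $C_n\subset C_{n+1}$. Next, the linear map $\mathrm{Convex\ Hull}(\cdot)$ on compact subsets of $\R^d$ is $1$-Lipschitz for the Euclidean Hausdorff distance $d_H$, so
\[
d_H\bigl(\mathrm{Convex\ Hull}(\arg(E_n)),\mathrm{Convex\ Hull}(\arg(E_{n+1}))\bigr) \le d_H\bigl(\arg(E_n),\arg(E_{n+1})\bigr).
\]
Since $\exp(i\,\cdot)$ is $1$-Lipschitz from $(\R^d, \|\cdot\|)$ to $(\T^d, d^{\T^d})$, this gives
\[
d_H^{\T^d}(C_n, C_{n+1}) \le d_H\bigl(\arg(E_n),\arg(E_{n+1})\bigr).
\]
It remains to relate $d_H(\arg(E_n),\arg(E_{n+1}))$ back to $d_H^{\T^d}(E_n,E_{n+1})$. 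Here one uses that on the region where $\arg$ stays at distance $\delta$ from $\partial([-\pi,\pi]^d)$, the inverse map $\exp(i\,\cdot)$ is bi-Lipschitz with a constant depending only on $\delta$; condition (3) ensures $d_H^{\T^d}(E_n,E_{n+1}) \le 2^{-n}\min(1, d(\arg(E_n),\partial([-\pi,\pi]^d)))$, so the lift of $E_{n+1}$ does not escape a neighbourhood of $\arg(E_n)$ on which $\arg$ is uniformly Lipschitz with constant bounded (say) by $2$. Thus $d_H(\arg(E_n),\arg(E_{n+1})) \le 2\,d_H^{\T^d}(E_n,E_{n+1}) \le 2^{-n+1}$, and summing the geometric series shows $(C_n)$ is Cauchy.

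The main obstacle I anticipate is the bookkeeping in the last step: making precise that condition (3), which controls $d_H^{\T^d}(E_n, E_{n+1})$ relative to how close $\arg(E_n)$ comes to the boundary of the cube, indeed forces the continuous lift of $E_{n+1}$ to remain in a fixed tube around $\arg(E_n)$ where $\arg$ has a controlled modulus of continuity — one has to rule out a point of $E_{n+1}$ being close to some point of $E_n$ in $d^{\T^d}$ yet having an $\arg$-value near the opposite side of the cube (a "wrap-around"), which is precisely where the factor $d(\arg(E_n),\partial([-\pi,\pi]^d))$ in (3) is used, together with the fact that $E_n$ (being an increasing union target) is connected. Once the sequence is shown Cauchy, completeness of the space of compact subsets of the compact metric space $\T^d$ under Hausdorff distance (see e.g. \cite[Theorem 7.3.8]{Bur}) yields the limit $C$, and $C$ is compact; monotonicity $C_n\subset C_{n+1}$ additionally identifies $C$ with $\overline{\bigcup_n C_n}$, though that is not needed for the statement.
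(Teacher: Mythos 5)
Your argument is correct and follows the paper's own strategy: bound $d_H^{\T^d}(C_n,C_{n+1})$ by a constant multiple of $d_H^{\T^d}(E_n,E_{n+1})$, sum the resulting geometric series forced by condition~(3), and conclude by completeness of the space of compact subsets of $\T^d$ under the Hausdorff distance. The only difference is cosmetic: the paper isolates the key Lipschitz estimate as Lemma~\ref{lm:lips_CH} (with the sharp constant $1$), whereas you re-derive it inline with constant $2$ by passing through $\arg$ and $\exp$ and ruling out wrap-around via condition~(3) --- which is precisely the content of the proof of Lemma~\ref{lm:lips_CH}.
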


Before proving this result, we state the following useful lemma. It is a standard exercise to show that in $\mathbb{R}^d$, taking the convex hull is a $1$-Lipschitz operation for the Hausdorff distance; it turns out that the same does not hold in general for simple subsets of $\mathbb{T}^d$, see Figure \ref{fig:lipschitz} for a counterexample. However, the following weaker version of this property holds.

\begin{lemma}
\label{lm:lips_CH}
Let $E, F \subset \T^d$ be compact connected very simple subsets such that 
$d_H^{\T^d}(E,F) \leq  \frac{1}{2} d\left(\arg(F),\partial\left([-\pi,\pi]^d \right) \right).$
Then 
\[ d_H^{\T^d}\left(\mathrm{Convex\ Hull}_{\T^d}(E), \mathrm{Convex\ Hull}_{\T^d}(F)\right) \leq d_H^{\T^d}(E,F).\]
\end{lemma}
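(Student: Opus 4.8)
The strategy is to reduce everything to a computation with $\arg$ in $\R^d$, where convex hull is $1$-Lipschitz, and to control the error introduced when we lift $E$ and $F$ by possibly different multiplications. First I would pick $b \in \T^d$ admissible for $F$ (so that $\mathrm{Convex\ Hull}_{\T^d}(F) = b^{-1}\cdot \exp(i\,\mathrm{Convex\ Hull}(\arg(b\cdot F)))$). The key observation is that the hypothesis $d_H^{\T^d}(E,F) \leq \tfrac12 d(\arg(F),\partial([-\pi,\pi]^d))$ forces $\arg(b\cdot E)$ and $\arg(b\cdot F)$ to live inside a common compact box strictly inside $(-\pi,\pi)^d$: since $b\cdot E$ and $b\cdot F$ are (or can be taken) very simple and $d_H$-close, and multiplication is an isometry by Lemma~\ref{lm:trans_haus}, every point of $b\cdot E$ is within $d_H^{\T^d}(E,F)$ of $b\cdot F$, hence its argument (taken in the right branch) stays at distance at most $\tfrac12 d(\arg(b\cdot F),\partial)$ from $\arg(b\cdot F)$ and in particular does not wrap around $-1$. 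On that common box, the map $\arg$ is a genuine isometry from $\T^d$-distance to Euclidean distance, so $d_H^{\R^d}(\arg(b\cdot E),\arg(b\cdot F)) = d_H^{\T^d}(E,F)$.

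\textbf{Continuation.} Next I need to check that $b$ (admissible for $F$) can also be used to compute $\mathrm{Convex\ Hull}_{\T^d}(E)$ via the same formula $b^{-1}\cdot\exp(i\,\mathrm{Convex\ Hull}(\arg(b\cdot E)))$. Since $E$ and $F$ are compact, connected, and very simple, Definition~\ref{def:CH_simple} together with the remark after it says that for connected simple sets the convex hull is simply $b^{-1}\cdot\exp(i\,\mathrm{Convex\ Hull}(\arg(b\cdot E)))$ for \emph{any} $b$ with $b\cdot E$ very simple (there is only one connected component, so the phase-shift obstruction is vacuous). By the argument above $b\cdot E$ is indeed very simple, so this formula applies. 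Therefore both convex hulls are of the form $b^{-1}\cdot\exp(i\,(\cdot))$ applied to Euclidean convex hulls of $\arg(b\cdot E)$ and $\arg(b\cdot F)$.

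\textbf{Conclusion of the plan.} Now I invoke two facts: (i) in $\R^d$, $d_H^{\R^d}(\mathrm{Convex\ Hull}(A),\mathrm{Convex\ Hull}(B)) \leq d_H^{\R^d}(A,B)$; (ii) on a fixed compact box $K \subset (-\pi,\pi)^d$ the map $\theta \mapsto \exp(i\theta)$ is $1$-Lipschitz from $(\R^d,\|\cdot\|)$ to $(\T^d,d^{\T^d})$, and left multiplication by $b^{-1}$ is an isometry. So I would verify that $\mathrm{Convex\ Hull}(\arg(b\cdot E))$ and $\mathrm{Convex\ Hull}(\arg(b\cdot F))$ both sit inside a common such box $K$ (this follows from the box containment above, since convex hull of a subset of a box stays in the box), apply (i) to get $d_H^{\R^d}$-closeness of the Euclidean convex hulls by exactly $d_H^{\T^d}(E,F)$, then push forward through $\exp(i\cdot)$ and $b^{-1}\cdot$ using (ii) to obtain $d_H^{\T^d}(\mathrm{Convex\ Hull}_{\T^d}(E),\mathrm{Convex\ Hull}_{\T^d}(F)) \leq d_H^{\T^d}(E,F)$.

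\textbf{Main obstacle.} The delicate point, and the reason the hypothesis is quantitative rather than qualitative, is the very first step: showing that a single admissible $b$ for $F$ simultaneously keeps both $\arg(b\cdot E)$ and $\arg(b\cdot F)$ inside a box on which $\arg$ is an honest isometry (no $2\pi$ jumps) \emph{and} on which $\arg$ of the Hausdorff $\varepsilon$-neighborhoods still behaves linearly. One has to be careful that a point of $E$ near the ``back'' of the circle could have its argument on the wrong branch; the bound $d_H^{\T^d}(E,F) \leq \tfrac12 d(\arg(F),\partial)$ is exactly what rules this out, leaving a safety margin of another $\tfrac12 d(\arg(F),\partial)$ so that $\varepsilon$-neighborhoods of $\arg(b\cdot F)$ (with $\varepsilon = d_H^{\T^d}(E,F)$) still avoid $\partial([-\pi,\pi]^d)$. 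Once the box containment is nailed down, the rest is the routine combination of the Euclidean $1$-Lipschitz property of convex hull with the fact that all the remaining maps in sight are isometries or $1$-Lipschitz on that box.
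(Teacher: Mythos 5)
Your proof is correct and takes essentially the same approach as the paper's: use the quantitative hypothesis to match branches of $\arg$ along Hausdorff-close pairs, transfer to $\R^d$, and let the Euclidean $1$-Lipschitz property of the convex hull do the work; the paper inlines this with explicit convex combinations and a $\gamma \to 1$ limit, while you factor it cleanly through a Hausdorff-distance comparison followed by the Lipschitz push-forward under $\exp(i\,\cdot)$.

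One small wording caveat: the assertion that ``on that common box, $\arg$ is a genuine isometry from $\T^d$-distance to Euclidean distance'' is not literally true -- two points of the box $\{\theta : d(\theta,\partial([-\pi,\pi]^d)) \geq \tfrac{1}{2} d(\arg(F),\partial)\}$ can have an argument difference exceeding $\pi$ in a coordinate, in which case the toroidal distance takes the short way around and is strictly smaller than the Euclidean one. What you actually use, and what is true, is the weaker cross-set statement: for each $z\in E$ and its Hausdorff-closest $w\in F$ (and vice versa), $d^{\T^d}(z,w)\le \tfrac{1}{2} d(\arg(F),\partial)$ together with $d(\arg w,\partial)\ge d(\arg(F),\partial)$ forces the argument branches to agree, so $d^{\T^d}(z,w)=\|\arg z-\arg w\|_{\R^d}$; this gives $d_H^{\R^d}(\arg E,\arg F)\le d_H^{\T^d}(E,F)$, which is all that is needed for the rest of your argument.
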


\begin{figure}[h]
\includegraphics{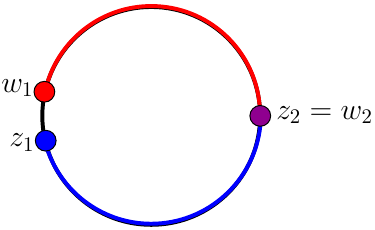}
\caption{Two very simple subsets $E = \{z_1,z_2\}$ and $F = \{w_1,w_2\}$ of $\mathbb{S}^1$ whose convex hulls (in blue, the convex hull of $E$, in red, the convex hull of $F$) are at Hausdorff distance greater than the Hausdorff distance between $E$ and $F$.}
\label{fig:lipschitz}
\end{figure}

\begin{proof}
Before starting the proof, we recall that, because of Definition \ref{def:CH_simple} and the remark following it, 
$\mathrm{Convex \ Hull}_{\T^d}(E) = \exp\left( i \left( \mathrm{Convex \ Hull}_{\T^d}(\arg(E)) \right) \right)$
and similarly for $F$.\\

Let $z \in \mathrm{Convex \ Hull}_{\T^d}(F)$; there exists $\theta \in \mathrm{Convex \ Hull}(\arg(F))$ such that $z = \exp(i \theta)$. Thus $\theta$ can be written as a finite linear combination of elements of $\arg(F)$: 
there exist $\alpha_1, \ldots, \alpha_m \in [0,1]$ and $\theta^{1}, \ldots, \theta^d \in \arg(F)$ such that 
$\sum_{\ell = 1}^m \alpha_{\ell} = 1$ and $\theta = \sum_{\ell = 1}^m \alpha_{\ell} \theta^{\ell}$
(here we use superscripts to avoid confusion with the components of elements of $\mathbb{R}^d$). For $1 \leq \ell \leq m$, let $z^{\ell} = \exp(i \theta^{\ell}) \in F.$
Fix $1 < \gamma < 2$; there exists $w^1, \ldots, w^m \in E$ such that for $1 \leq \ell \leq m$
\begin{equation} d^{\T^d}(z^{\ell}, w^{\ell}) \leq \gamma \ d_H^{\T^d}(E,F) \leq \gamma \delta, \label{eq:dist_lm_simple}\end{equation}
where $\delta = \frac{1}{2} d\left(\arg(F),\partial\left([-\pi,\pi]^d \right) \right)$. Consider 
\[ w = \exp\left(i \sum_{\ell = 1}^m \alpha_{\ell} \arg(w^{\ell})\right) \in \mathrm{Convex \ Hull}_{\T^d}(E).\]
For $\ell \in \llbracket 1, m \rrbracket$, choose a non-zero $\phi^{\ell} \in (2\pi \Z)^d$; then $\arg(w^{\ell}) - \phi^{\ell}$ does not belong to $[-\pi,\pi]^d$, thus 
\[ \left\| \arg(z^{\ell}) - \arg(w^{\ell}) + \phi^{\ell} \right\|_{\R^d} \geq d\left(\arg(F),\partial\left([-\pi,\pi]^d \right) \right),  \]
which implies, by the choice of $\gamma$, that
\[ \left\| \arg(z^{\ell}) - \arg(w^{\ell}) + \phi^{\ell} \right\|_{\R^d} \geq \gamma \delta \geq\left\| \arg(z^{\ell}) - \arg(w^{\ell})  \right\|_{\R^d}. \]
Therefore  
$d^{\T^d}(z^{\ell}, w^{\ell}) = \| \theta^{\ell} - \arg(w^{\ell})\|_{\R^d}.$
Combining this equality with the fact that 
\[ d^{\T^d}(z, w) \leq \left\| \sum_{\ell = 1}^m \alpha_{\ell} \left(\arg(w^{\ell}) - \theta^{\ell} \right)\right\|_{\R^d} \leq \sum_{\ell = 1}^m  \alpha_{\ell} \left\| \arg(w^{\ell}) - \theta^{\ell} \right\|_{\R^d}, \]
and equation (\ref{eq:dist_lm_simple}), 
\[ d^{\T^d}(z, w) \leq \gamma \sum_{\ell = 1}^m  \alpha_{\ell} \ d_H^{\T^d}(E,F) = \gamma d_H^{\T^d}(E,F).\]
Hence $d^{\T^d}(z,\mathrm{Convex \ Hull}_{\T^d}(E)) \leq \gamma d_H^{\T^d}(E,F).$ 

Exchanging  roles of $E$, $F$, we have that for every $w \in \mathrm{Convex \ Hull}_{\T^d}(E)$, the inequality 
$d^{\T^d}(w,\mathrm{Convex \ Hull}_{\T^d}(F)) \leq \gamma d_H^{\T^d}(E,F)$
holds. Thus, from the following characterization of the Hausdorff distance (see for instance \cite[Exercise $7.3.2$]{Bur}):
\begin{equation} d_H^{\mathbb{T}^d}(A,B) \leq r \Leftrightarrow \left( \forall a \in A, \ d^{\T^d}(a,B) \leq r \ \mathrm{and} \ \forall b \in B, \ d^{\T^d}(b,A) \leq r \right), \label{eq:char_haus}\end{equation}
we deduce that 
$d_H^{\mathbb{T}^d}(\mathrm{Convex \ Hull}_{\T^d}(E),\mathrm{Convex \ Hull}_{\T^d}(F)) \leq \gamma d_H^{\T^d}(E,F).$
Since $\gamma > 1$ was arbitrary, this concludes the proof. 
\end{proof}

\begin{proof}[Proof of Lemma \ref{lm:cv_simple}]
Since $(E_n)_{n \geq 1}$ converges, it is a Cauchy sequence, and furthermore, thanks to the previous lemma, we have that for every $n \geq 1$, $d_H^{\T^d}(C_n,C_{n+1}) \leq d_H^{\T^d}(E_n,E_{n+1}).$ 
Therefore, the triangle inequality yields, for every $n,p \geq 1$,
$d_H^{\T^d}(C_n,C_{n+p}) \leq \sum_{\ell = n}^{n + p-1} d_H^{\T^d}(E_{\ell},E_{\ell+1}).$
But the series 
$\sum_{\ell \geq 1} d_H^{\T^d}(E_{\ell},E_{\ell+1})$
converges; this implies that the sequence $(C_n)_{n \geq 1}$ is a Cauchy sequence as well. But it is a well-known fact that the set of compact subsets of a complete metric space, endowed with the Hausdorff distance, is complete \cite[Proposition $7.3.7$]{Bur}. Applying this to our context, we get that the sequence $(C_n)_{n \geq 1}$ converges to some compact subset $C \subset \mathbb{T}^d$.
\end{proof}

The limit obtained in Lemma \ref{lm:cv_simple} is in fact unique, in the sense that it only depends on $E$ and not on the very simple sets used to approximate it.

\begin{lemma}
If $E$ is a compact connected subset of $\T^d$ and $(E_n)_{n \geq 1}$, $(F_n)_{n \geq 1}$ are two very simple approximations of $E$ (see Lemma \ref{lm:cv_simple} for terminology), then
$\lim_{n \to +\infty} \mathrm{Convex \ Hull}_{\mathbb{T}^d}(E_n) = \lim_{n \to +\infty} \mathrm{Convex \ Hull}_{\mathbb{T}^d}(F_n),$
where, as usual, the limit is taken with respect to the Hausdorff distance.
\end{lemma}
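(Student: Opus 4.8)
The plan is to compare the two approximations term by term using the quantitative estimate of Lemma~\ref{lm:lips_CH}; the crux is to check that the hypothesis of that lemma holds for each pair $(E_n,F_n)$ once $n$ is large, uniformly in $n$. The one real ingredient for that is that a very simple approximation of $E$ cannot rush towards $\partial\bigl([-\pi,\pi]^d\bigr)$.

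So I would first prove the following: if $(G_n)_{n\geq 1}$ is any very simple approximation of $E$ and $\rho_n := d\bigl(\arg(G_n),\partial([-\pi,\pi]^d)\bigr)$, then $\inf_n \rho_n > 0$. Since $G_1$ is compact and very simple, $\arg(G_1)$ is a compact subset of the open cube $(-\pi,\pi)^d$, so $\rho_1>0$. Fix $n$; because $G_n\subset G_{n+1}$, for every $w\in G_{n+1}$ there is $z\in G_n$ with $d^{\T^d}(z,w)\leq d_H^{\T^d}(G_n,G_{n+1})\leq 2^{-n}\rho_n$. As each point of $\arg(G_n)$ lies at distance $\geq\rho_n$ from the boundary of the cube we have $\arg(z)\in[-\pi+\rho_n,\pi-\rho_n]^d$; since moreover $\arg(w)\in(-\pi,\pi)^d$ and $2^{-n}\rho_n<\rho_n$, any nonzero $\theta\in(2\pi\Z)^d$ satisfies $\|\arg(z)-\arg(w)+\theta\|_{\R^d}>\rho_n>d^{\T^d}(z,w)$, so the minimum defining $d^{\T^d}(z,w)$ is attained at $\theta=0$ and therefore $\|\arg(z)-\arg(w)\|_{\R^d}=d^{\T^d}(z,w)\leq 2^{-n}\rho_n$. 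Hence $d\bigl(\arg(w),\partial([-\pi,\pi]^d)\bigr)\geq(1-2^{-n})\rho_n$; taking the infimum over $w\in G_{n+1}$ gives $\rho_{n+1}\geq(1-2^{-n})\rho_n$, and since $\prod_{k\geq 1}(1-2^{-k})$ converges to a positive real,
\[ \rho_n\ \geq\ \rho_\infty:=\rho_1\prod_{k\geq 1}(1-2^{-k})\ >\ 0\qquad\text{for every }n\geq 1. \]
(As a byproduct, every $G_n$ and hence $E=\lim_n G_n$ is very simple.)

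Now I would apply this to both $(E_n)$ and $(F_n)$, obtaining $\rho_\infty,\sigma_\infty>0$ bounding from below the respective distances of $\arg(E_n)$ and $\arg(F_n)$ to $\partial([-\pi,\pi]^d)$. Both sequences converge to $E$, so $d_H^{\T^d}(E_n,F_n)\to 0$ by the triangle inequality, and there is $N$ with $d_H^{\T^d}(E_n,F_n)\leq\tfrac12\sigma_\infty\leq\tfrac12 d\bigl(\arg(F_n),\partial([-\pi,\pi]^d)\bigr)$ for all $n\geq N$. For such $n$ the sets $E_n,F_n$ are compact, connected and very simple, so Lemma~\ref{lm:lips_CH} yields, writing $C_n:=\mathrm{Convex \ Hull}_{\T^d}(E_n)$ and $C_n':=\mathrm{Convex \ Hull}_{\T^d}(F_n)$, the bound $d_H^{\T^d}(C_n,C_n')\leq d_H^{\T^d}(E_n,F_n)\to 0$. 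Letting $C=\lim_n C_n$ and $C'=\lim_n C_n'$ (these limits exist by Lemma~\ref{lm:cv_simple}), the triangle inequality $d_H^{\T^d}(C,C')\leq d_H^{\T^d}(C,C_n)+d_H^{\T^d}(C_n,C_n')+d_H^{\T^d}(C_n',C')$ gives, after $n\to\infty$, $d_H^{\T^d}(C,C')=0$; since $C$ and $C'$ are compact (hence closed), $C=C'$. The main obstacle is exactly the uniform lower bound $\inf_n\rho_n>0$: this is the step where condition (3) in the definition of a very simple approximation is essential, and without such control the hypothesis of Lemma~\ref{lm:lips_CH} could fail for every $n$ when $E$ approaches a coordinate equal to $-1$ — recall (Figure~\ref{fig:lipschitz}) that the torus convex hull is not $1$-Lipschitz in general.
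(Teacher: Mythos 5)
Your proof is correct, and while it lands on the same final triangle-inequality plus Lemma~\ref{lm:lips_CH} argument as the paper, the way you verify the hypothesis of Lemma~\ref{lm:lips_CH} is genuinely different and, in my view, more illuminating. The paper bounds $d_H^{\T^d}(E_n,F_n)$ directly by a telescoping estimate, obtaining
$d_H^{\T^d}(E_n,F_n)\leq \tfrac{1}{2^{n-2}}\max\bigl(d(\arg(E_n),\partial([-\pi,\pi]^d)),\,d(\arg(F_n),\partial([-\pi,\pi]^d))\bigr)$
(after correcting a small constant), which for $n$ large enough is below the threshold $\tfrac12 d(\arg(F_n),\partial([-\pi,\pi]^d))$. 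You instead isolate a clean structural fact: condition~(3) in the definition of a very simple approximation forces the distances $\rho_n = d(\arg(G_n),\partial([-\pi,\pi]^d))$ to satisfy $\rho_{n+1}\geq(1-2^{-n})\rho_n$, hence to remain bounded below by $\rho_1\prod_{k\geq 1}(1-2^{-k})>0$. Once that uniform lower bound is in hand, the qualitative convergence $d_H^{\T^d}(E_n,F_n)\to 0$ (immediate from both sequences converging to $E$) suffices to enter the regime of Lemma~\ref{lm:lips_CH}, with no need for the finer decay rate. Your route buys an explicit byproduct that the paper leaves implicit: any compact connected set admitting a very simple approximation must itself be very simple, since $E=\overline{\bigcup_n G_n}$ is contained in the closed set $\{z:\ d(\arg(z),\partial([-\pi,\pi]^d))\geq\rho_\infty\}$. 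This clarifies what the notion of very simple approximation can and cannot reach, and it makes transparent why condition~(3) is the load-bearing hypothesis. The only thing to be careful about — and you handle it correctly — is justifying that the minimizing $\theta$ in $d^{\T^d}(z,w)$ is $0$, which requires both $\arg(z)$ staying $\rho_n$-away from the boundary and $d^{\T^d}(z,w)<\rho_n$, the latter following from $2^{-n}\rho_n<\rho_n$.
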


\begin{proof}
For $n \geq 1$, let
$C_n = \mathrm{Convex \ Hull}_{\mathbb{T}^d}(E_n)$ and $D_n = \mathrm{Convex \ Hull}_{\mathbb{T}^d}(F_n).$
Denote by $C$ (respectively $D$) the limit of the sequence $(C_n)_{n \geq 1}$ (respectively $(D_n)_{n \geq 1}$). By the triangle inequality, we have that 
\begin{equation} d_H^{\mathbb{T}^d}(C,D) \leq d_H^{\mathbb{T}^d}(C,C_n) + d_H^{\mathbb{T}^d}(C_n,D_n) + d_H^{\mathbb{T}^d}(D_n,D). \label{eq:triangle}\end{equation}
The first and third terms on the right hand side of this inequality go to zero as $n$ goes to infinity. Let us estimate the second one. For every $p \geq n$, we have, using the triangle inequality again:
\[ \begin{split} d_H^{\T^d}(E_n,F_n) & \leq \frac{1}{2^n} \Big( \min \left(1, d\left(\arg(E_n),\partial\left([-\pi,\pi]^d \right) \right) \right) \\
& + \min \left(1, d\left(\arg(F_n),\partial\left([-\pi,\pi]^d \right) \right) \right)  \Big) + d_H^{\T^d}(E_{n+1},F_{n+1}). \end{split}  \]
By iterating and using the fact that 
\[ d\left(\arg(E_{n+1}),\partial\left([-\pi,\pi]^d \right) \right) \leq d\left(\arg(E_n),\partial\left([-\pi,\pi]^d \right) \right) \]
since $E_n \subset E_{n+1}$, we obtain that for every $p \geq 1$
\[  \begin{split} d_H^{\T^d}(E_n,F_n) & \leq \frac{1}{2^n} \left( \sum_{\ell = 0}^{p} \frac{1}{2^{\ell}} \right) \Big( \min \left(1, d\left(\arg(E_n),\partial\left([-\pi,\pi]^d \right) \right) \right) \\
& + \min \left(1, d\left(\arg(F_n),\partial\left([-\pi,\pi]^d \right) \right) \right)  \Big) + d_H^{\T^d}(E_{n+p},F_{n+p}). \end{split}   \]
Letting $p$ go to infinity, we deduce that 
\[ d_H^{\T^d}(E_n,F_n) \leq \frac{1}{2^{n-1}} \max \left(d\left(\arg(E_n),\partial\left([-\pi,\pi]^d \right) \right), d\left(\arg(F_n),\partial\left([-\pi,\pi]^d \right) \right) \right).  \]
Therefore, thanks to Lemma \ref{lm:lips_CH}, the second term in the right hand side of equation (\ref{eq:triangle}) satisfies
$d_H^{\mathbb{T}^d}(C_n,D_n) \leq d_H^{\mathbb{T}^d}(E_n,F_n)$
and converges to zero as well. Hence, letting $n$ go to infinity in equation (\ref{eq:triangle}), we obtain that $d_H^{\mathbb{T}^d}(C,D) = 0$, which yields $C = D$ because $C$ and $D$ are closed.
\end{proof}

Now, let $E$ be any compact connected subset of $\T^d$, and let $\mathrm{App}(E) \subset \T^d$ be the set of $a \in \T^d$ such that $a \cdot E$ admits a very simple approximation (see Lemma \ref{lm:cv_simple}). We define a map $C_E$ from $\mathrm{App}(E)$ to the set of compact subsets of $\T^d$ as follows: for $a \in \mathrm{App}(E)$, $C_E(a)$ is the limit of the convex hull of any very simple approximation of $a \cdot E$. 

\begin{definition}
\label{def:hullizable}
We say that $E$ is \emph{hullizable} if $\mathrm{App}(E) \neq \emptyset$ and the map 
$a \in \mathrm{App}(E)  \mapsto a^{-1} \cdot C_E(a)$ is constant.
\end{definition}

Figure \ref{fig:non_hullizable} displays an example of non-hullizable set.

\begin{figure}[h]
\subfigure[$C_E(a) \neq \T^2$.]{\includegraphics{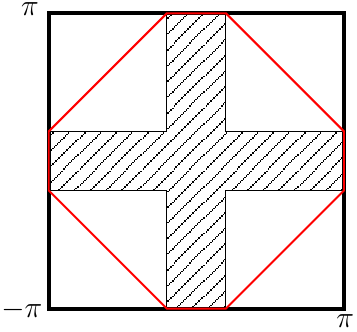}}
\hspace{3mm}
\subfigure[$C_E(a) = \T^2$.]{\includegraphics{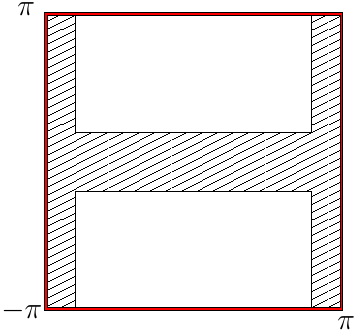}}
\caption{An example of non hullizable set $E \subset \T^2$. The figure displays $\arg(a \cdot E)$ for two different values of $a \in \T^2$, and the corresponding set $\arg(C_E(a))$ (the boundary of which is represented by a red line).}
\label{fig:non_hullizable}
\end{figure}

\begin{definition} 
Let $E$ be a hullizable compact connected subset of $\T^d$. We define the convex hull of $E$ as 
$\mathrm{Convex \ Hull}_{\T^d}(E) := a^{-1} \cdot C_E(a)$
for any $a \in \mathrm{App}(E)$.
\end{definition}

This definition agrees with Definition \ref{def:CH_simple} when $E$ is simple. Indeed, if $a \in \T^d$ is such that $a \cdot E$ is very simple, then $(F_n = a \cdot E)_{n \geq 1}$ is a very simple approximation of $a \cdot E$, hence 
\[ C_E(a) =  \mathrm{Convex \ Hull}_{\T^d}(a \cdot E) = \exp \left(i \ \mathrm{Convex \ Hull}(\arg(a \cdot E)) \right) \]
Consequently, it follows from Lemma \ref{lm:admissible} (with $N=1$) that $E$ is hullizable, and its convex hull is computed as in Definition \ref{def:CH_simple}.

\subsection*{Acknowledgements.} AP is supported by NSF grants DMS-1055897 and DMS-1518420. He also received support from ICMAT Severo Ochoa Program. YLF was supported by the European Research Council advanced grant 338809; moreover, this work was initiated while he was visiting ICMAT, Madrid, in March 2015, and he is grateful for the hospitality of the institute.

\bibliographystyle{abbrv}
\bibliography{LFPel_unitary}

\noindent\\
Yohann Le Floch\\
Institut de Recherche Math\'ematique Avanc\'ee\\
UMR 7501, Universit\'e de Strasbourg et CNRS\\
7 rue Ren\'e Descartes\\
67000 Strasbourg, France\\
{\em E\--mail}: \texttt{ylefloch@unistra.fr}\\
\noindent
\\
{\'A}lvaro Pelayo \\
University of California, San Diego\\
Department of Mathematics \\
9500 Gilman Dr \#0112\\
La Jolla, CA 92093-0112, USA.\\
{\em E\--mail}: \texttt{alpelayo@math.ucsd.edu} 

\end{document}